
\documentclass[11pt, leqno]{article}

\usepackage{amsmath,amsthm,amsfonts,amssymb,tabularx}
\usepackage[latin1]{inputenc}
\usepackage{graphicx}
\usepackage{color}
\usepackage{nameref}
\usepackage{lineno}
\usepackage{caption}
\usepackage{subcaption}


\usepackage[inner=3cm,outer=4.5cm,bottom=4cm,top=4cm]{geometry}

\usepackage{hyperref}
\usepackage{pdfpages}

\hypersetup{urlcolor=blue, colorlinks=true}

\parskip 4pt
\parindent 4pt

\addtolength{\hoffset}{-.5cm}\addtolength{\textwidth}{3cm}
\addtolength{\voffset}{-.5cm}\addtolength{\textheight}{2cm}


\newtheorem{theorem}{Theorem}[section]
\newtheorem{proposition}[theorem]{Proposition}

\newtheorem{defn}[theorem]{Definition}
\newtheorem{lemma}[theorem]{Lemma}

\newtheorem{remark}{Remark}
\newcommand\R{\mathbb{R}}

\newcommand\intr{\int_{\mathbb{R}^N}}

\newcommand\SqLop{ \left( (-\Delta)^{-1}\mathcal{L}^{1-s}_\epsilon \right)^{\frac{1}{2}} }

\numberwithin{equation}{section}


\newcommand{\RN}{\mathbb{R}^N}



\begin{document}



\title{\bf Existence of weak solutions for a general porous  \\ [8pt]
medium equation with nonlocal pressure \\ { \ } }

\author{\bf Diana Stan, F\'elix del Teso  and Juan Luis V\'azquez}

\maketitle

\begin{abstract}
We study  the general  nonlinear diffusion equation $u_t=\nabla\cdot (u^{m-1}\nabla (-\Delta)^{-s}u)$ that describes a flow through a porous medium which is driven by a nonlocal pressure. We consider  constant parameters  $m>1$ and $0<s<1$,  we assume that  the solutions are non-negative  and the problem is posed in the whole space. In this paper we prove existence of weak solutions  for all integrable initial data $u_0 \ge 0$ and for all exponents $m>1$ by  developing a new approximation method that allows to treat the range $m\ge 3$, that could not be covered by previous works.  We also extend the class of initial data to include any  non-negative measure $\mu$ with finite mass.  In passing from bounded initial data to measure  data  we make strong use of an $L^1$-$L^\infty$ smoothing effect and other functional estimates.  Finite speed of propagation is established for all $m\ge 2$, and this property implies the existence of free boundaries. The authors had already proved that finite propagation does not hold for $m<2$.
\end{abstract}

\vspace{1.3cm}

\noindent\textbf{Keywords}: Nonlinear fractional diffusion, fractional Laplacian,  existence of weak solutions, energy estimates, speed of propagation, smoothing effect, numerical simulations.
\vspace{0.3cm}

\noindent 2000 {\sc Mathematics Subject Classification.}
26A33, 
35K65, 
76S05. 

\small
\vspace{1.5cm}

\noindent \textbf{Addresses:}\\
Diana Stan, {\tt diana.stan@unican.es}, Departamento de Matem\'{a}ticas, Estad\'istica y Computaci\'on, Universidad de Cantabria, Av. de los Castros, 39005 Santander, Cantabria, Spain. \\
 F\'{e}lix del Teso, {\tt fdelteso@bcamath.org}, Basque Center for Applied Mathematics, Alameda de Mazarredo 14, 48009 Basque-Country, Spain. \\
  Juan Luis V{\'a}zquez, {\tt juanluis.vazquez@uam.es}, Departamento de Matem\'{a}ticas, Universidad Aut\'{o}noma de Madrid, Campus de Cantoblanco, 28049 Madrid, Spain.

\

\newpage
\small
\tableofcontents

\normalsize

\section{Introduction}

In this paper we study the following  evolution equation of diffusive type with nonlocal effects
\begin{equation}\label{model1}
   \left\{ \begin{array}{ll}
  \partial_t u = \nabla \cdot (u^{m-1} \nabla (-\Delta)^{-s}u)    &\text{for } x \in \RN, \, t>0,\\[2mm]
  u(0,x)  =u_0(x) &\text{for } x \in \RN,
    \end{array}  \right.
\end{equation}
for $u=u(x,t)$, exponents $m > 1$, $0<s<1$, and space dimension $N\geq 1$. We will only consider nonnegative data and solutions $u_0, u\ge 0$ on physical grounds. The problem will be posed in the whole space, with $x\in\RN$ and $t>0$. Here $ (-\Delta)^{-s}$ denotes the inverse of the fractional Laplacian operator as defined in \cite{Stein70}.

Our aim is to construct weak solutions for all   nonnegative   initial data $u_0 \in L^1(\RN)$ and for all the stated range of parameters. Model \eqref{model1}   reduces to   the Porous Medium Equation   $\partial_t u = \nabla \cdot (u^{m-1} \nabla u)$ when $s=0$,  \cite{VazBook}, but here we allow for a new dependence via  the inverse fractional Laplacian operator, $\partial_t u = \nabla \cdot (u^{m-1} \nabla p)$ with $p=(-\Delta)^{-s}u$, which accounts for nonlocal effects in the diffusive process.   For convenience we   will call this intermediate variable $p$ the pressure, though it is not in agreement with the usual PME convention unless $m=2$.

  Model \eqref{model1}   was studied for $m=2$ by Caffarelli and V\'azquez starting with \cite{CaffVaz,CaffVaz2}, followed by \cite{CaffSorVaz,CaffVazReg,CarrilloHuangVazquez}.   In these papers existence of weak solutions, finite speed of propagation, local H\"older regularity, and asymptotic behaviour were established for the particular model. This model and ours are  particular cases   of the general equations proposed in  \cite{GiacLeb1, GiacLeb2} in statistical physics, that take the  form $u_t=\nabla\cdot(\sigma(u)\nabla {\mathcal{L}}(u))$. There is also a physical motivation in the theory of dislocations proposed by Head, that has been investigated by Biler, Karch and Monneau \cite{BilerKarchMonneau} for $m=2$ in one space dimension. However, the extension of the dislocation model to several dimensions leads to a more complicated system that falls outside of the present investigation.  Finally,  we point out that the gradient flow structure for \eqref{model1} with $m=2$ has been recently developed in \cite{Lisini} using Wasserstein metrics in the style of \cite{AGS2008}.
Uniqueness of   suitable solutions   is still an open problem for all these models in several space dimensions, but it holds for $N=1$ according to \cite{BilerKarchMonneau}.   See more on this issue in Section \ref{sec:comments}.

Existence of   a class of weak solutions for $m \in(1,3)$, obtained as limits of approximations,  was proved by the   present authors   in \cite{StanTesoVazCRAS,StanTesoVazJDE} under some extra decay conditions on the initial data. In that paper we employed a rather standard regularization of the singular operator by considering   a suitable smooth kernel  $K_\epsilon $ such that   $K_\epsilon \star u \to |x|^{-(N-2s)} \star u = (-\Delta)^{-s} u$. Energy estimates allowed us to obtain compactness, but only in the   stated range   of $m$. New methods seemed to be needed   to tackle    the more degenerate case $m\ge 3$;    it is the purpose of the present paper to address and solve that problem.   A further discussion on this issue can be found in Section \ref{sec:comments}. The main step we take here in order to prove existence of weak solutions of \eqref{model1} is a novel approximation method. It consists in interpreting model \eqref{model1} in  the form
$$
u_t= \nabla \cdot(u^{m-1} \nabla (-\Delta)^{-1} \mathcal{L}  u).
$$
Then, we approximate the operator  $\mathcal{L}= (-\Delta)^{1-s}$  by
\[
\mathcal{L}^{1-s}_\epsilon [u](x)=C_{N,1-s}\int_{\RN}\frac{u(x)-u(y)}{\left(|x-y|^2+\epsilon^2\right)^{\frac{N+2-2s}{2}}}dy.
\]
  This approach to model \eqref{model1} allows us to prove some needed  $L^p$-estimates, that are an essential tool in order to derive convergence of the solutions of the approximating problems.

We start by assuming initial data $u_0 \in L^1(\RN)\cap L^\infty(\RN)$, $u_0 \ge 0$, and we prove existence of a class of weak solutions   constructed   via an approximating method   that uses the preceding observation and proceeds via several approximation steps.   The paper combines a great variety of compactness techniques and the detailed proofs show how the available energy estimates can be used step by step as we pass to the limit in the approximating models. The main difficulties  of the construction are: the nonlocal and nonlinear character of the equation, absence of comparison principle, absence of explicit self-similar solutions (except very particular cases, c.f \cite{StanTesoVazTrans}).

A second contribution of the paper is the generality of the initial data. We  may take $u_0 =\mu \in \mathcal{M}^+(\R^N)\,$, the space of nonnegative Radon measures on $\RN$ with finite mass. This  covers in particular the case of merely integrable data $u_0\in L^1(\RN)$. We cover that issue in Section  \ref{SectionExistL1} where we obtain existence of  weak solutions for the whole range $1<m<\infty$, generalizing the results of \cite{CaffVaz} and \cite{StanTesoVazJDE}, where the cases $m=2$ and $m\in (1,3)$ were covered respectively. This rounds up the existence theory.

Another positive property of this approach is that it can be successfully generalized to more general equations of the form
\[
u_{t}(x,t) = \nabla \cdot (G'(u) \nabla (-\Delta)^{-s}u),
\]
where $G: [0,+\infty)\to [0,+\infty)$ is a regular function with at most linear growth at the origin.

A remarkable property of  many diffusive PDE's of degenerate type  is  finite speed of propagation,   which means that the support of the solutions may spread but only with finite speed.   When we combine degenerate nonlinearities (powers with $m>1$) and nonlocal effects it is not clear whether finite propagation will hold or not. The property  was first  observed by Caffarelli and V\'azquez in \cite{CaffVaz} for the model with $m=2$, see also \cite{BilerKarchMonneau} for $N=1$. In \cite{StanTesoVazJDE} we discovered that the nonlinearity has a strong influence on the speed of propagation property of solutions independently of $s\in (0,1)$. Indeed, we proved two different   types of behaviour   depending on the exponent $m$: finite speed of propagation for $m\in (2,3)$ and infinite speed of propagation for $m\in (1,2)$. A numerical simulation using \cite{delTesoCalcolo} pointed us to this change in the  positivity property of the solution. We establish here the property of finite propagation for all $m\ge 2$. See Figure \ref{figura1}.
  Paper \cite{StanTesoVazSpringer} by the present authors contains a survey of results on this equation and its motivations, including   the main results of the present paper. Moreover,  as a further contribution the asymptotic behaviour of solutions with integrable data is established in $N=1$. The problem is still open in several dimensions.

Let us comment on some   closely related literature. Indeed, another possible extension   of the model  studied by Caffarelli and V\'azquez in \cite{CaffVaz} for $m=2$ has been considered in \cite{BilerImbertKarch, BilerKarchMonneau, Imb16}. They assume that $p=(-\Delta)^{-s}u^{m-1}$   and the resulting equation is
\[
\partial_t u = \nabla \cdot (u\nabla (-\Delta)^{-s}u^{m-1}).
\]
   In that case there exists a weak solution with finite speed of propagation for the range $m>1$. Moreover, they find explicit Barenblatt self-similar profiles\footnote{We note for comparison reasons that in their notation $\alpha= 2(1-s)$.}.    It is also proved that finite propagation holds for all $m>1$, which implies a strong qualitative difference with our model  \eqref{model1} where finite propagation happens only for $m>2$.    We can also consider  models  including nonlinearities on both terms like
\[
\partial_t u = \nabla \cdot (u^{m}\nabla (-\Delta)^{-s}u^{n}).
\]
They are interesting for comparison purposes. Work on this last model is naturally more incomplete, we refer to \cite{StanTesoVazTrans, DoZh17}.

We  finally recall that there is another model of nonlocal porous medium equation:
\begin{equation}\label{FPME}
v_t + (-\Delta)^{s}( v^{m} )=0
\end{equation}
with $m >0$ and ${s} \in (0,1)$ for which the theory has been quite developed in \cite{PQRV1,PQRV2,BV2014,VazquezBarenblattFPME,BV2014,BFV18}, see also the survey paper \cite{Vazquez2014}.
 Infinite propagation holds for this model even if $m>1$.  A very interesting result is the connection between model \eqref{model1} and model \eqref{FPME}: we have found  in \cite{StanTesoVazTrans}  an exact transformation formula between self-similar solutions of the two models,   \eqref{FPME} and \eqref{model1},   but it only applies to the  range $m<2$ of our present model.   We finally refer to \cite{VazCIME} or a general presentation of the state of the art in nonlinear diffusion including linear and nonlinear models with local and nonlocal operators.


\section{Precise statement of the main results}

We recall that all data and solutions are nonnegative and we will stress this fact when convenient. In this section will only present the results for integrable and bounded initial data since establishing the existence and main properties in this case contains the main difficulties. For  clarity of exposition, we delay to Section \ref{SectionExistL1} the case of measure data since it is an independent contribution of the paper.

\begin{defn}\label{def1}
Let $u_0\in L^1_{\textup{loc}}(\R^N)$ and nonnegative. We say that $u\geq0 $ is a weak solution of Problem \eqref{model1} if: \\ (i) $u \in L^1_{\textup{loc}}(\RN \times (0,T))$ , (ii) $\nabla (-\Delta)^{-s}u \in L^1_{\textup{loc}}(\RN \times (0,T))$, (iii) $u^{m-1} \nabla (-\Delta)^{-s} u\in L^1_{\textup{loc}}(\RN \times (0,T))$ and
\begin{equation*}
\int_0^T\int_{\RN} u \phi_t\,dxdt-\int_0^T\int_{\RN}  u^{m-1} \nabla (-\Delta)^{-s} u \cdot\nabla \phi \,dxdt+ \int_{\RN} u_0(x) \phi(x,0)dx=0
\end{equation*}
  for all test functions  $\phi \in C^1_c(\RN \times [0,T))$.
\end{defn}

We state our main results on the existence and qualitative properties of solutions.

\begin{theorem}\label{Thm1PMFP}
Let  $1<m <\infty$, $N\ge 1$, and let $u_0\in L^1(\RN)\cap L^\infty (\RN)$ and nonnegative.
Then there exists a weak solution $u\geq0$ of Problem \eqref{model1}  such that $u\in L^1(\RN \times (0,T))$, $u\in L^\infty (\RN \times (0,T))$, and $ (-\Delta)^{\frac{1-s}{2}} u^{r} \in L^2(\RN \times (0,T))$ for all $r>m/2 $. Moreover, $u$ has the following properties:
\begin{enumerate}
\item \textbf{(Conservation of mass)} For all $0 < t < T$ we have
$\displaystyle{
\int_{\RN}u(x,t)dx=\int_{\RN}u_0(x)dx.
}$
\item \textbf{($L^{\infty}$ estimate) } For all $0 < t < T$ we have  $||u(\cdot,t)||_\infty\leq ||u_0||_\infty$.

\item \textbf{($L^p$ energy estimate)} For all $1<p<\infty$ and $0 < t < T$ we have
\begin{equation}\label{Lpdecay} \intr u^p(x,t)dx + \frac{4p(p-1)}{(m+p-1)^2}\int_0^t \int_{\RN}\left|(-\Delta)^{\frac{1-s}{2}} u^{\frac{m+p-1}{2}}\right|^2 dxdt \le \intr u_0^p(x)dx .
\end{equation}

\item \textbf{(Second energy estimate)} For all $0 < t < T$  we have
\begin{equation}\label{SecondEnergy}
  \frac{1}{2} \intr \left|(-\Delta)^{-\frac{s}{2}} u(t)\right|^2 dx  +
\int_0^t \intr  u^{m-1} \left| \nabla  (-\Delta)^{-s} u(t)\right|^2 dx  dt \le \frac{1}{2} \intr \left|(-\Delta)^{-\frac{s}{2}} u_0\right|^2 dx.
\end{equation}

\end{enumerate}
\end{theorem}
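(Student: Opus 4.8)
The strategy is to obtain $u$ as a limit of solutions of regularized problems, following the scheme announced in the Introduction, and to recover each of the four stated properties by passing to the limit in its analogue at the regularized level. \textbf{Approximate problems.} Rewrite the equation as $u_t=\nabla\cdot(u^{m-1}\nabla(-\Delta)^{-1}\mathcal{L}u)$ with $\mathcal{L}=(-\Delta)^{1-s}$, replace $\mathcal{L}$ by the kernel operator $\mathcal{L}_\epsilon$, truncate the nonlinearity to a globally Lipschitz $\Phi_\epsilon$ with $\epsilon\le\Phi_\epsilon\le\epsilon^{-1}$ and $\Phi_\epsilon(u)=u^{m-1}$ on $[\epsilon,\epsilon^{-1}]$, add a vanishing viscosity term $\epsilon\Delta u$, and mollify the datum to $0\le u_{0\epsilon}\in C_c^\infty$ with $\|u_{0\epsilon}\|_1\le\|u_0\|_1$, $\|u_{0\epsilon}\|_\infty\le\|u_0\|_\infty$. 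This gives the problem
$$\partial_t u_\epsilon=\nabla\cdot\big(\Phi_\epsilon(u_\epsilon)\nabla(-\Delta)^{-1}\mathcal{L}_\epsilon u_\epsilon\big)+\epsilon\Delta u_\epsilon.$$
Since $\mathcal{L}_\epsilon$ has a kernel in $L^1\cap L^\infty$ (so $\nabla(-\Delta)^{-1}\mathcal{L}_\epsilon$ acts as a bounded, smoothing operator on $L^1\cap L^\infty$) and the viscosity makes the problem uniformly parabolic with smooth bounded coefficients, existence, uniqueness, smoothness, nonnegativity and conservation of mass for the approximate solution $u_\epsilon$ follow from standard quasilinear parabolic theory together with a fixed-point argument in the nonlocal term.

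\textbf{$\epsilon$-uniform estimates.} Conservation of mass is obtained by integrating. For the $L^p$ estimate multiply by $p\,u_\epsilon^{p-1}$ and integrate by parts; the viscous term has the good sign and the principal term equals $-\int H_\epsilon(u_\epsilon)\mathcal{L}_\epsilon u_\epsilon$, where $H_\epsilon'(u)=p(p-1)u^{p-2}\Phi_\epsilon(u)$. Because $\mathcal{L}_\epsilon f(x)=\int_{\RN}(f(x)-f(y))J_\epsilon(x-y)\,dy$ with $J_\epsilon\ge0$, the Stroock--Varopoulos (nonlocal C\'ordoba--C\'ordoba) inequality gives $\int H_\epsilon(u_\epsilon)\mathcal{L}_\epsilon u_\epsilon\ge\int\mathcal{H}_\epsilon(u_\epsilon)\mathcal{L}_\epsilon\mathcal{H}_\epsilon(u_\epsilon)$ with $\mathcal{H}_\epsilon'=\sqrt{H_\epsilon'}$, and for the pure power this is exactly $\tfrac{4p(p-1)}{(m+p-1)^2}\int u_\epsilon^{\frac{m+p-1}{2}}\mathcal{L}_\epsilon u_\epsilon^{\frac{m+p-1}{2}}$. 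Integrating in time yields the $\epsilon$-version of \eqref{Lpdecay}, hence $\|u_\epsilon(t)\|_p\le\|u_0\|_p$ for all $p$; letting $p\to\infty$ gives the $L^\infty$ bound. The second energy estimate comes from testing with $p_\epsilon:=(-\Delta)^{-1}\mathcal{L}_\epsilon u_\epsilon$: as $(-\Delta)^{-1}\mathcal{L}_\epsilon$ is symmetric and nonnegative, $\int\partial_t u_\epsilon\,p_\epsilon=\frac{d}{dt}\tfrac12\|((-\Delta)^{-1}\mathcal{L}_\epsilon)^{1/2}u_\epsilon\|_2^2$, the flux gives $-\int\Phi_\epsilon(u_\epsilon)|\nabla p_\epsilon|^2$, and the viscosity has a sign.

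\textbf{Passage to the limit.} Pick $p$ with $r:=\frac{m+p-1}{2}>\frac m2$; the estimates above bound $u_\epsilon^{\,r}$ in $L^2(0,T;H^{1-s}(\RN))$ uniformly in $\epsilon$. Reading $\partial_t u_\epsilon$ off the equation gives a uniform bound in $L^1$ of a negative-order Sobolev space, so the Aubin--Lions--Simon lemma applied locally yields strong convergence of $u_\epsilon^{\,r}$, hence a.e.\ convergence of a subsequence of $u_\epsilon$; with the uniform $L^\infty$ bound this upgrades to $u_\epsilon\to u$ strongly in every $L^q_{\mathrm{loc}}(Q_T)$, $q<\infty$, and likewise $u_\epsilon^{m-1}\to u^{m-1}$. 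Using $u\in L^1\cap L^\infty$ and HLS/Riesz-potential bounds together with the consistency $\mathcal{L}_\epsilon\to\mathcal{L}$, one shows $\nabla(-\Delta)^{-1}\mathcal{L}_\epsilon u_\epsilon\to\nabla(-\Delta)^{-s}u$ in a suitable weak sense; combined with the strong convergence of $u_\epsilon$ this identifies the limit of the flux $\Phi_\epsilon(u_\epsilon)\nabla(-\Delta)^{-1}\mathcal{L}_\epsilon u_\epsilon$ as $u^{m-1}\nabla(-\Delta)^{-s}u$. One then passes to the limit in the weak formulation to obtain a weak solution in the sense of Definition \ref{def1}, and each of the four properties follows from its $\epsilon$-level version by weak lower semicontinuity of the Dirichlet energy functionals and Fatou's lemma; in particular $(-\Delta)^{\frac{1-s}{2}}u^{\,r}\in L^2(Q_T)$ for every $r>m/2$.

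\textbf{Main obstacle.} The heart of the proof is the last step, and specifically the flux term. One controls no Sobolev norm of $u_\epsilon$ itself, only of the superlinear power $u_\epsilon^{\,r}$, and the bound on $\partial_t u_\epsilon$ is weak, so compactness must be extracted carefully and only locally. More seriously, the pressure gradient is controlled only through the \emph{degenerate} weighted quantity $u_\epsilon^{m-1}|\nabla(-\Delta)^{-s}u_\epsilon|^2\in L^1(Q_T)$, so near $\{u=0\}$ one has essentially no direct information on $\nabla(-\Delta)^{-s}u$, and identifying the weak limit of the product $u_\epsilon^{m-1}\nabla(-\Delta)^{-1}\mathcal{L}_\epsilon u_\epsilon$ requires combining this degenerate control with the a.e.\ convergence of $u_\epsilon$. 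Establishing the uniform-in-$\epsilon$ kernel and consistency estimates for $\mathcal{L}_\epsilon$ and $(-\Delta)^{-1}\mathcal{L}_\epsilon$ — including that $\int f\,\mathcal{L}_\epsilon f\to\int|(-\Delta)^{\frac{1-s}{2}}f|^2$ with the correct lower semicontinuity — is the technical backbone on which everything rests.
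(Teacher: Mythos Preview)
Your strategy matches the paper's in spirit --- approximate $(-\Delta)^{1-s}$ by the bounded-kernel operator $\mathcal{L}_\epsilon$, add viscosity, regularize the degeneracy, then pass to the limit --- but the paper decouples these into four separate parameters $\epsilon,\delta,\mu,R$ and removes them one at a time, and this decoupling is not merely cosmetic.

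There is a genuine gap at the flux identification. By tying the lower truncation $\Phi_\epsilon\ge\epsilon$ to the same parameter as the kernel regularization, your second energy estimate only yields $\|\nabla p_\epsilon\|_{L^2}^2\le C\epsilon^{-1}$, which blows up. In the paper, keeping $\mu>0$ fixed while sending $\epsilon\to0$ produces the uniform bound $\|\nabla(-\Delta)^{-1}\mathcal{L}^{1-s}_\epsilon U_1\|_{L^2}\le\mu^{-(m-1)/2}C$, and this is precisely what gives weak $L^2$ convergence of the pressure gradient (Lemma~\ref{LemmaWeakConv}). Without it your ``suitable weak sense'' is not established: the $L^p$ energy estimate bounds $u_\epsilon^{\,r}$ only in the approximate Gagliardo space $H^{1-s}_{\epsilon_0}$, not in $H^{1-s}$, and the appeal to HLS/Riesz bounds does not supply a uniform-in-$\epsilon$ control on $\nabla(-\Delta)^{-1}\mathcal{L}_\epsilon u_\epsilon$ in any Lebesgue space. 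You note the degeneracy near $\{u=0\}$ as the obstacle, but do not provide a mechanism that identifies the weak limit of $\Phi_\epsilon(u_\epsilon)^{1/2}\nabla p_\epsilon$.

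The paper also handles the final removal of viscosity with a case split your outline omits. For $m\le2$ one can take $p=3-m$, so that $r=1$ and $u$ itself lies in $H^{1-s}$, whence $\nabla(-\Delta)^{-s}u$ converges weakly in $L^2$. For $m>2$ no admissible $p>1$ makes $r=1$, and the paper instead integrates by parts to write the flux integral as $\int u\,\nabla\!\cdot(-\Delta)^{-s}(u^{m-1}\nabla\phi)$ and invokes the Biler--Imbert--Karch estimates to bound the second factor in $L^p$ uniformly. Your single-limit scheme would need both mechanisms at once, with the additional complication that the operator is still $\mathcal{L}_\epsilon$ rather than $(-\Delta)^{1-s}$. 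Finally, Aubin--Lions--Simon for $u_\epsilon^{\,r}$ requires a time-derivative bound on $u_\epsilon^{\,r}$, not on $u_\epsilon$; multiplication by $r\,u_\epsilon^{\,r-1}\in L^\infty$ does not preserve $H^{-1}$. The paper circumvents this at the $\delta\to0$ step by switching to the Rakotoson--Temam compactness criterion (Theorem~\ref{RakotosonTemam3}), checking equicontinuity of $t\mapsto\langle u(t),\psi\rangle$ directly from the equation.
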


\medskip

\begin{remark}{\rm  (a) The a priori estimates 1, 2, 3 and 4 for Problem \eqref{model1} can be derived in a formal way as in  \cite[Section 3]{StanTesoVazJDE}. A rigorous proof for 1, 2 and 4 when $m\in(1,3)$ can be found in that paper. The approximation used there does not allow to cover the whole range $m\in (1, +\infty)$ because of the lack of an $L^p$ type energy estimate like \eqref{Lpdecay}. However, 1 and 2 follow as in \cite{StanTesoVazJDE} and therefore they will not be discussed   in detail   here.}

\noindent {\rm  (b) We would like to note that estimates \eqref{Lpdecay} and \eqref{SecondEnergy} do not present any special form or extra difficulty when $m=2$, $m=3$ or $m>3$, as it happened with the First Energy Estimate \eqref{FirstEnergy} used in \cite{StanTesoVazJDE} and \cite{CaffVaz}. See Section \ref{sec:comments} for a more detailed discussion about this fact.}
\end{remark}

\begin{theorem}[\textbf{Smoothing effect}] \label{ThmSmoothing}
Let $u\geq0$ be a weak solution of Problem \eqref{model1} with  nonnegative   initial data $u_0 \in L^1(\RN) \cap L^{\infty}(\RN)$  as constructed in Theorem \ref{Thm1PMFP}.  Then,
\begin{equation}\label{smoothform}
\displaystyle \| u(\cdot,t)\|_{L^{\infty}(\RN)} \le C_{N,s,m,p} \, t^{-\gamma_p}\|u_0\|_{L^p(\RN)}^{\delta_p} \quad \textup{for all} \quad t>0,
\end{equation}
where
$ \gamma_p=\frac{N}{(m-1)N+2p(1-s)}$, $ \delta_p=\frac{2p(1-s)}{(m-1)N+2p(1-s)}$.
\end{theorem}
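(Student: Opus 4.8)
The plan is to combine the $L^p$ energy estimate \eqref{Lpdecay} with a fractional Nash–Gagliardo–Nirenberg inequality to set up a differential inequality for the $L^q$ norms, and then run the Moser-type iteration in the exponent. First I would fix $p\ge 1$ and, for an increasing sequence of exponents $q_k$, test the (approximate) equation against $u^{q_k-1}$; by the energy estimate \eqref{Lpdecay}, with $q$ in place of $p$, we have
\begin{equation*}
\frac{d}{dt}\int_{\RN} u^{q}\,dx \le -\frac{4q(q-1)}{(m+q-1)^2}\int_{\RN}\left|(-\Delta)^{\frac{1-s}{2}} u^{\frac{m+q-1}{2}}\right|^2 dx .
\end{equation*}
Writing $r=\frac{m+q-1}{2}$, the dissipation term controls $\|u^{r}\|_{\dot H^{1-s}}^2$. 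The fractional Gagliardo–Nirenberg–Sobolev inequality gives $\|w\|_{L^2}^{2}\le C\|w\|_{\dot H^{1-s}}^{2\theta}\|w\|_{L^1}^{2(1-\theta)}$ with $\theta = \frac{N}{N+2(1-s)}$ (equivalently, embed $\dot H^{1-s}(\RN)\hookrightarrow L^{2N/(N-2(1-s))}$ and interpolate with $L^1$). Applying this to $w=u^{r}$ turns the right-hand side into a negative power of $\int u^{2r} = \int u^{m+q-1}$ times a power of $\int u^{r}$. Choosing the exponents along the iteration so that $2r_{k} $ relates to $q_{k-1}$ — i.e. setting up the recursion $q_{k}\approx \lambda q_{k-1}$ for a fixed $\lambda>1$ with the lower-order $L^1$-type factor at each step being the previously controlled norm $\|u(\cdot,t)\|_{L^{q_{k-1}}}$ — yields, after integrating the resulting ODE in time, a recursive bound of the form $\|u(\cdot,t)\|_{L^{q_k}}\le C_k\, t^{-\alpha_k}\|u(\cdot,s)\|_{L^{q_{k-1}}}^{\beta_k}$ on dyadic time slices.

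The core computation is the single-step estimate: from $y'(t)\le -c\,y(t)^{1+a} M^{-b}$ (schematically, with $y=\int u^{q}$, $M$ a lower norm, $a,b>0$ depending on $N,s,m,q$) one integrates to get $y(t)\le (c a t)^{-1/a}M^{b/a}$, which is exactly the scaling-correct $t^{-\gamma}\|\cdot\|^{\delta}$ form. Then I would track the exponents through the iteration $q_k\to\infty$: the time exponents $\alpha_k$ and norm exponents $\beta_k$ satisfy linear recursions whose limits are finite (this is where the condition $m>1$, i.e. $(m-1)N+2p(1-s)>0$, guarantees convergence and positivity of $\gamma_p,\delta_p$), and summing the telescoping contributions of the constants $C_k$ and the time powers produces the claimed exponents $\gamma_p=\frac{N}{(m-1)N+2p(1-s)}$ and $\delta_p=\frac{2p(1-s)}{(m-1)N+2p(1-s)}$. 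The cleanest way to pin down these exponents without summing series is a dimensional-analysis / scaling check: the equation \eqref{model1} is invariant under $u\mapsto A\,u(B^{\sigma} x, B^{\theta} t)$ for an appropriate relation between $A,B$; matching $\|u_0\|_{L^p}$ and $\|u(t)\|_\infty$ under this scaling forces precisely the stated $\gamma_p,\delta_p$, so once we know a bound of this \emph{type} holds, the exponents are determined.

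I expect the main obstacle to be making the iteration rigorous at the level of the approximating problems $\mathcal{L}_\epsilon$ rather than formally: one must verify that the $L^q$ energy inequality holds uniformly in $\epsilon$ (and in the other regularization parameters used in Theorem \ref{Thm1PMFP}) with constants independent of $\epsilon$, that the fractional GNS inequality can be applied to $u_\epsilon^{r}$ with the correct homogeneity despite $\mathcal{L}_\epsilon$ only approximating $(-\Delta)^{1-s}$, and that the bound survives the limit $\epsilon\to 0$ — which it does, by lower semicontinuity of norms under the weak convergence already established. A secondary technical point is the short-time vs. large-time bookkeeping in the iteration: one typically performs the $k$-th step on the time interval $(t/2^{k+1}, t/2^{k})$ or $(t(1-2^{-k}), t(1-2^{-k-1}))$, and one must check the geometric series of time-shifts converges so the final estimate is stated at time $t$ with a clean constant; standard, but it needs care to land exactly on $t^{-\gamma_p}$. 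Finally, passing from $p>1$ (where \eqref{Lpdecay} is stated) down to, or including, the endpoint behaviour is routine once the $p>1$ case is done, by the $L^1$-contraction/mass conservation already available.
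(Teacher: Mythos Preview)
Your proposal is correct and follows essentially the same route as the paper: combine the $L^p$ energy estimate \eqref{Lpdecay} with a Nash--Gagliardo--Nirenberg inequality applied to $u^{(m+q-1)/2}$, integrate the resulting differential inequality, and run a Moser iteration in the exponent. The paper's own proof is a three-line sketch that invokes exactly these ingredients and defers the iteration details to Theorem~8.2 of \cite{PQRV2}; you have simply spelled out what that reference contains, including the ODE step, the dyadic time-slicing, and the scaling check for the exponents.
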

\begin{proof}
We  combine \eqref{Lpdecay} with the  Nash-Gagliardo-Niremberg Inequality \eqref{NGNIneq} applied to the function $\displaystyle {f= u^{(m+p-1)/2}}$ to get a starting point for a Moser iteration. Then we continue as in  \cite[Theorem 8.2]{PQRV2} where the authors consider the model $u_t + (-\Delta)^{\sigma/2}u^m=0$ for $\sigma= 2-2s$. From here, the proof is straightforward.
\end{proof}

\begin{remark} {\rm In the limit $m \to 1^
+$, Theorems \ref{Thm1PMFP}, \ref{ThmSmoothing} (and also Theorem \ref{ThmExistL1}) recover some of the results of the linear Fractional Heat Equation (cf.  \cite{BSV16})}.
\end{remark}

\begin{theorem}
Let $m \ge 2$, $N\ge 1$, $s\in (0,1)$. Let $u$ be a weak solution of Problem \eqref{model1} as constructed in Theorem \ref{ThmExistL1} with compactly supported initial data $u_0 \in L^1(\RN)$.  Then $u(\cdot,t)$ is compactly supported for all $t>0$, i.e. the solution has finite speed of propagation.
\end{theorem}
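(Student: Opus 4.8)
The plan is to follow the classical finite-propagation scheme — Caffarelli--V\'azquez for $m=2$ and \cite{StanTesoVazJDE} for $m\in(2,3)$ — organized so that the hypothesis $m\ge 2$ enters only through the elementary inequality $u^{m-1}\le\|u\|_{L^\infty}^{m-2}\,u$, which fails for $m<2$, in agreement with the infinite propagation known there. The estimate is cleanest when $u_0\in L^1(\RN)\cap L^\infty(\RN)$, where $\|u(t)\|_\infty\le\|u_0\|_\infty$ by Theorem~\ref{Thm1PMFP}(2); the case of general compactly supported $u_0\in L^1(\RN)$ is then recovered from the smoothing effect of Theorem~\ref{ThmSmoothing}, which controls $\|u(t)\|_\infty$ by a singularity $t^{-\gamma_1}$ that is integrable enough (one checks that the relevant exponent stays below $1$ precisely because $m\ge 2$) for the time integrals below to remain finite.

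The heart of the argument is a weighted mass estimate. Assume $\operatorname{supp}u_0\subset\overline{B_{R_0}}$ and, for $\lambda>0$, take the Lipschitz weight $\eta_\lambda(x)=\bigl(e^{\lambda(|x|-R_0)}-1\bigr)_+$, which vanishes on $\operatorname{supp}u_0$ and satisfies $|\nabla\eta_\lambda|\le\lambda(\eta_\lambda+1)$ everywhere. Testing the weak formulation with $\eta_\lambda$ (truncated beyond a large radius $M$ so that all integrals are finite, the truncation being removed at the end by monotone convergence), writing $F_\lambda(t)=\int_{\RN}u(t)\,\eta_\lambda\,dx$, and using $u^{m-1}\le\|u\|_\infty^{m-2}u$ together with conservation of mass gives
\[
F_\lambda'(t)=-\int_{\RN}u^{m-1}\nabla(-\Delta)^{-s}u\cdot\nabla\eta_\lambda\,dx\le\lambda\,C'(t)\,\bigl(F_\lambda(t)+\|u_0\|_1\bigr),\qquad C(t):=\int_0^t\|u(\tau)\|_\infty^{m-2}\,\|\nabla(-\Delta)^{-s}u(\tau)\|_\infty\,d\tau.
\]
Since $F_\lambda(0)=0$, Gronwall's inequality yields $F_\lambda(t)\le\|u_0\|_1\bigl(e^{\lambda C(t)}-1\bigr)$. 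To conclude, I would bound $F_\lambda$ below by the exterior mass, $F_\lambda(t)\ge\bigl(e^{\lambda(R-R_0)}-1\bigr)\int_{|x|>R}u(t)\,dx$ for any $R>R_0$, so that
\[
\int_{|x|>R}u(x,t)\,dx\le\|u_0\|_1\,\frac{e^{\lambda C(t)}}{e^{\lambda(R-R_0)}-1}\longrightarrow 0\quad(\lambda\to\infty)\ \text{ whenever }R>R_0+C(t).
\]
Hence $\operatorname{supp}u(\cdot,t)\subset\overline{B_{R_0+C(t)}}$; when $u_0$ is bounded one has $C(t)\le Ct$, so the support spreads at finite (indeed linear) speed.

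The delicate point — and where the structure is really used — is the control of the pressure gradient $\|\nabla(-\Delta)^{-s}u(t)\|_\infty$ entering $C(t)$. For $s>\tfrac12$ it follows from $u\in L^1\cap L^\infty$ by splitting the Riesz kernel $\nabla(-\Delta)^{-s}\sim|z|^{-(N+1-2s)}$ near and far from the origin and optimizing, giving $\|\nabla(-\Delta)^{-s}u\|_\infty\lesssim\|u\|_\infty^{(N+1-2s)/N}\|u\|_1^{(2s-1)/N}$. For $s\le\tfrac12$ this kernel is no longer locally integrable and a pointwise bound does not follow from $L^1\cap L^\infty$ alone; one must exploit the energy bounds on $(-\Delta)^{\frac{1-s}{2}}u^{r}$ — most cleanly by running the whole estimate on the smooth approximations $u_\epsilon$ associated with $\mathcal L_\epsilon$, with constants uniform in $\epsilon$, and then passing to the limit — as in \cite{StanTesoVazJDE}. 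That passage to the approximate problems also provides the rigorous justification of the differential identity for $F_\lambda$, which would otherwise need extra care for the merely weak constructed solution. By contrast, the degeneracy $m\ge2$ costs nothing beyond the pointwise inequality $u^{m-1}\le\|u\|_\infty^{m-2}u$ and the convergence of the time integrals it produces.
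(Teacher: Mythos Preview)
Your approach is genuinely different from the paper's. The paper does not argue from scratch here: having constructed the weak solution, it simply invokes \cite{StanTesoVazJDE}, whose finite-propagation proof --- like that of \cite{CaffVaz} for $m=2$ --- proceeds by building explicit barrier functions (``true supersolutions'') of parabola type and comparing the approximate solutions against them via a maximum-principle argument at the regularized level, then passing to the limit. No $L^\infty$ bound on the pressure gradient $\nabla(-\Delta)^{-s}u$ is ever needed.

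Your exponential-weight scheme, by contrast, rests precisely on such a bound through the quantity $C'(t)=\|u(t)\|_\infty^{m-2}\,\|\nabla(-\Delta)^{-s}u(t)\|_\infty$, and this is where the gap lies. For $s>\tfrac12$ you are fine: the kernel of $\nabla(-\Delta)^{-s}$ is locally integrable and the bound follows from $u\in L^1\cap L^\infty$ as you wrote. For $s\le\tfrac12$, however, $\nabla(-\Delta)^{-s}$ is an operator of positive order $1-2s$, and an $L^\infty$ estimate on it requires H\"older regularity of $u$ that is not provided by the estimates of this paper. Neither of your suggested remedies closes the gap: the energy control on $(-\Delta)^{(1-s)/2}u^r$ is only $L^2$ in space--time and cannot yield a pointwise bound on a derivative, while running the estimate on the $\mathcal{L}^{1-s}_\epsilon$-approximations gives bounds on $\nabla(-\Delta)^{-1}\mathcal{L}^{1-s}_\epsilon[U_1]$ that blow up like $\|J_\epsilon^{1-s}\|_{L^1(\RN)}$ as $\epsilon\to0$, so the constants are not uniform. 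Note also that \cite{StanTesoVazJDE} itself uses barriers, not weighted masses, so the appeal ``as in \cite{StanTesoVazJDE}'' does not supply the missing estimate. The virtue of the barrier method is exactly that the nonlocal pressure gradient is computed on an explicit test profile rather than estimated on the unknown solution, which is why it works uniformly across all $s\in(0,1)$.
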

\begin{proof}
Once we construct a weak solution of Problem \eqref{model1}, we apply the results from \cite{StanTesoVazJDE}. The proof is based on a careful construction of barrier functions, called \emph{true supersolutions} in \cite{CaffVaz}.
\end{proof}


\section{Functional setting}

\subsection{The fractional Laplacian and the inverse operator}\label{Subsec:FracLap}
We remind some definitions and basic notions for the functional setting of the problem. We will work with the following functional spaces (see \cite{Hitch2012}).
Let $\mathcal{F}$ denote the Fourier transform. For given $s\in(0,1)$ we consider the space
$$
H^s(\RN):=\left\{u:L^2(\RN): \int_{\RN}(1+|\xi|^{2s})|\mathcal{F}u(\xi)|^2d\xi < +\infty \right\},
$$
with the norm
$$
\|u\|^2_{H^s(\RN)}:=\|u\|^2_{L^2(\RN)} + \int_{\RN} |\xi|^{2s}|\mathcal{F}u(\xi)|^2d\xi.
$$
For functions $u \in H^s(\RN)$, the fractional Laplacian operator is defined by
$$(-\Delta)^{s} u (x)= C_{N,s} \, \text{P.V.}\int_{\RN} \frac{u(x)-u(y)}{|x-y|^{N+2s}}dy=   \mathcal{F}^{-1}(|\xi|^{2s}(\mathcal{F}u)),
$$
for $x\in \RN$, where $C_{N,s}=\pi^{-(2s+N/2)}\Gamma(N/2+s)/\Gamma(-s).$
 Then,
$$
\|u\|^2_{H^s(\RN)}=\|u\|^2_{L^2(\RN)} +  C \| (-\Delta)^{s/2}u\|_{L^2(\RN)}^2.
$$
For functions $u$ that are defined on a subset $\Omega \subset \RN$ with $u=0$ on the boundary $\partial \Omega$, we will use the \emph{restricted} version of the fractional Laplacian computed by extending the function $u$ to the whole $\RN$ with $u=0$ in $\RN \setminus \Omega.$  The same idea is used to define the $H^s(\RN)$ norm for functions defined in $\Omega$.

If $N>2s$, the inverse operator $(-\Delta)^{-s}$ coincides with the Riesz potential of order $2s$. It can be represented by convolution with the Riesz kernel $K_s$:
\[
(-\Delta)^{-s} u=K_s*u, \ \ \ K_s(x)=\frac{1}{c(N,s)}|x|^{-(N-2s)},
\]
where $c(N,s)=\pi^{N/2-2s}\Gamma(s)/\Gamma((N-2s)/2).$ Notice that $K_s \in L^1_{\textup{loc}}(\RN)$. When $N=1$ and $s\in[1/2,1)$ we have to consider the composed operator $\nabla (-\Delta)^{-s}$. This operator use to be called \em nonlocal gradient \em and is denoted by $\nabla^{1-2s}$ (c.f. \cite{BilerImbertKarch,StanTesoVazJDE}). See Section \ref{SectN1} for a more detailed discussion of this range.

\subsection{Approximation of the fractional Laplacian \texorpdfstring{ $(-\Delta)^s$}{} } \label{Subsec:AproxFracLap} Let $\epsilon>0$ and $u:\RN \to \R$. We define the operator
\begin{equation}\label{AproxFracLap}\mathcal{L}^s_\epsilon [u] (x):= C_{N,s}\int_{\RN} \frac{u(x)-u(y)}{\left(|x-y|^{2}+\epsilon^2\right)^{\frac{N+2s}{2}}}dy,
\end{equation}
for $x\in \RN.$
We will use the notation
$$
J_\epsilon^s(z):=\frac{C_{N,s}}{\left(|z|^{2}+\epsilon^2\right)^{\frac{N+2s}{2}}} \quad \text{for } z \in \RN.
$$
It is clear that $\| J_\epsilon^s\|_{L^1(\RN)}<\infty$ since $J_\epsilon^s$ is integrable at infinity and nonsingular at the origin. Thus \eqref{AproxFracLap} is equivalent to
\begin{equation}\label{AproxFracLap:Equiv}
\mathcal{L}^s_\epsilon [u] (x) =u(x)\|J_\epsilon^s\|_{L^1(\RN)} - (u(t,\cdot)\star J_\epsilon^s) (x).
\end{equation}
This kind of zero-order operators has been considered in the literature, see e.\,g. \cite{AMRT, IgRo09,Ros08}. For any $\epsilon>0$,  $\mathcal{L}_\epsilon^s$ is an integral operator with non-singular kernel and  $\mathcal{L}^s_\epsilon [u]  \to (-\Delta)^{s}u$  pointwise in $\RN$ as $\epsilon \to 0$ for suitable  functions $u$. This approximation can also be seen as a consequence of the fact that the fractional Laplacian can be computed by passing to the limit in the representation of the solution of an harmonic extension problem (using the explicit Poisson formula), as proved by Caffarelli and Silvestre in \cite{CaffSilvExt}.

We can define the bilinear form
$$\mathcal{E}_\epsilon(u,v)= \frac{C_{N,s}}{2}\int_{\RN}\int_{\RN} \frac{\left(u(x)-u(y) \right)\left(v(x)-v(y) \right)}{(|x-y|^2+\epsilon^2)^{\frac{N+2s}{2}}}dx dy  \quad  \text{for }u,v\in D(\mathcal{L}_{\epsilon}),$$
and the quadratic form
$$ \overline{\mathcal{E}}_\epsilon(u):=\mathcal{E}_\epsilon(u,u)=\frac{C_{N,s}}{2}\int_{\RN}\int_{\RN} \frac{\left[u(x)-u(y) \right] ^2}{(|x-y|^2+\epsilon^2)^{\frac{N+2s}{2}}}dx dy . $$
The bilinear form ${\mathcal{E}}_\epsilon$ is well defined for functions in    $L^2(\R^N)$ since the $J_\epsilon^s$ is bounded and integrable. We refer to \cite{dTEnJa16b} for a precise discussion of the natural spaces in a more general framework.


\begin{lemma}\label{lem:l1linftyL}Let $0<s<1$. Then, for every $\epsilon>0$,  we have that $$\mathcal{L}^s_\epsilon: L^1(\RN) \cap L^{\infty}(\RN) \to L^1(\RN) \cap L^{\infty}(\RN).$$  Moreover,
$$
\| \mathcal{L}^s_\epsilon[u]\|_{L^1(\RN)} \le 2\|u \|_{L^1(\RN)}  \|J_\epsilon^s \|_{L^1(\RN)},
$$
$$\| \mathcal{L}^s_\epsilon[u]\|_{L^{\infty}(\RN)} \le 2\|u \|_{L^{\infty}(\RN)}  \|J_\epsilon^s \|_{L^1(\RN)}. $$
\end{lemma}

\begin{proof}
Let $u \in L^1(\RN) \cap L^{\infty}(\RN)$, then using \eqref{AproxFracLap:Equiv} and the Young Inequality for convolutions
 the stated estimates follow.
%
\end{proof}

\noindent\textbf{The restricted operator.}     For smooth functions $f:B_R \to \R$   we extend $f=0$ on $\RN \setminus B_R$. In this way $\mathcal{L}^s_\epsilon $ is well defined for $f\in L^2(B_R)$ by \eqref{AproxFracLap}.

We will also use the following result regarding the composed operator $\nabla (-\Delta)^{-1} \mathcal{L}^{1-s}_\epsilon$ that we will treat in Section \ref{sec:aproxinverse} as a natural approximation of $\nabla(-\Delta)^{-s}$.

\begin{lemma} \label{Lemma:nonlocal} Let $0<s<1$. Then, for every $\epsilon,R>0$  we have that $$\nabla (-\Delta)^{-1} \mathcal{L}^{1-s}_\epsilon: L^1(B_R) \cap L^{\infty}(B_R) \to L^1(B_R) \cap L^{\infty}(B_R).$$  Moreover,
\[
\|\nabla (-\Delta)^{-1}\mathcal{L}_\epsilon^{1-s}[f]\|_{L^1(B_R)} \leq C ( \left\| f\right\|_{L^\infty(B_R)} +  \left\| f\right\|_{L^1(B_R)})
\]
\[
\|\nabla (-\Delta)^{-1}\mathcal{L}_\epsilon^{1-s}[f]\|_{L^\infty(B_R)} \leq C( \left\| f\right\|_{L^\infty(B_R)} +  \left\| f\right\|_{L^1(B_R)}).
\]
\end{lemma}
\begin{proof}
We will write $\sim$ and $\lesssim$ to represent identities and inequalities up to constants depending on $R,N$ and $\epsilon$.

For $N\ge 2$ and $p=\{1,\infty\}$, we use Lemma \ref{lem:l1linftyL} with $f$ extended by 0 outside $B_R$ and the explicit form of the Newtonian potential to get
\begin{align*}
 \| \nabla (-\Delta)^{-1} \mathcal{L}_\epsilon^{1-s}[f]\|_{L^p( B_R)} &\lesssim   \left\| \int_{\RN}\frac{1}{|x-y|^{N-1}} |\mathcal{L}_\epsilon^{ 1-s}[f(y)] |dy \right\|_{L^p(B_R)}\\
 &\le \|\mathcal{L}_\epsilon^{1-s}[f] (y)\|_{L^p(B_R)}\,  \int_{B_R} \frac{1}{|x|^{N-1}}dx \lesssim  \|f\|_ {L^p(B_R)}.
\end{align*}
When $N=1$, we note that $\nabla (-\Delta)^{-1} g(x)=-\int_{-\infty}^x g(y) dy$, and thus \[  \nabla (-\Delta)^{-1} \mathcal{L}_\epsilon^{1-s}[f](x)=-\int_{-\infty}^x  \mathcal{L}_\epsilon^{1-s}[f](y) dy.\]
Then,
\begin{align*}
\| \nabla (-\Delta)^{-1} \mathcal{L}_\epsilon^{1-s}[f](x)\|_{L^p(B_R)}& \lesssim
\left\| J^{1-s}_\epsilon \right\|_{L^1(\RN)} \int_{-\infty}^\infty  |f(y)|  dy \lesssim \|f\|_{L^1(B_R)}.
\end{align*}
\end{proof}

\noindent\textbf{Square root.} The operator $\mathcal{L}^s_\epsilon$ has a square-root in the Fourier transform sense  \cite[Lemma 3.7]{EndalJakobsenTeso}, that we denote by $(\mathcal{L}^s_\epsilon)^{\frac{1}{2}}$. We have that
$$ <u,\mathcal{L}^s_\epsilon[u]>_{L^2(\RN)} =\|(\mathcal{L}^s_\epsilon)^{\frac{1}{2}}[u]\|^2_{L^2(\RN)}. $$
This implies that
\begin{align*}
<\mathcal{L}^s_\epsilon[u],u>_{L^2(\RN)}&=C_{N,s} \int_{\RN}\int_{\RN} u(x) \frac{u(x)-u(y)}{(|x-y|^2+\epsilon^2)^{\frac{N+2s}{2}}}dx dy\\&=
\frac{C_{N,s}}{2}\int_{\RN}\int_{\RN} \frac{\left[u(x)-u(y) \right] ^2}{(|x-y|^2+\epsilon^2)^{\frac{N+2s}{2}}}dx dy \\
&=\frac{C_{N,s}}{2}\int_{\RN}\int_{\RN} \left[\frac{u(x)-u(y) }{(|x-y|^2+\epsilon^2)^{\frac{N+2s}{4}}}\right] ^2 dx dy,
\end{align*}
where the second identity is obtained by symmetry. We get the following characterization of $(\mathcal{L}^s_\epsilon)^{\frac{1}{2}}$:
\begin{equation}\label{operatoronehalf}
 \int_{\RN}\left((\mathcal{L}^s_\epsilon)^{\frac{1}{2}} [u](x) \right)^2 dx  =\frac{C_{N,s}}{2}\int_{\RN}\int_{\RN} \left[\frac{u(x)-u(y) }{(|x-y|^2+\epsilon^2)^{\frac{N+2s}{4}}}\right] ^2 dx dy.
 \end{equation}

\begin{theorem}[\textbf{Generalized Stroock-Varopoulos Inequality for $\mathcal{L}^s_\epsilon $}]Let $u \in   L^2(\RN) $. Let $\psi: \R \to \R$ such that $\psi \in C^1(\R)$ and $\psi' \ge 0$. Then
\begin{equation}\label{GenStroockVarAprox}
\int_{\RN}\psi(u)\mathcal{L}^s_\epsilon [u] dx \ge \int_{\RN}\left|(\mathcal{L}^s_\epsilon)^{\frac{1}{2}}[\Psi (u)] \right|^{2} dx,
\end{equation}
where $\psi'=(\Psi')^2$.
\end{theorem}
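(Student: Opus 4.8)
The plan is to reduce the inequality to a pointwise algebraic estimate inside the double integral, exactly as one does for the classical fractional Laplacian. First I would use the symmetrized representation of the bilinear form: writing $J_\epsilon^s(z) = C_{N,s}(|z|^2+\epsilon^2)^{-(N+2s)/2}$, we have
\[
\int_{\RN}\psi(u)\,\mathcal{L}^s_\epsilon[u]\,dx
= \frac{1}{2}\int_{\RN}\int_{\RN}\bigl(\psi(u(x))-\psi(u(y))\bigr)\bigl(u(x)-u(y)\bigr)\,J_\epsilon^s(x-y)\,dx\,dy,
\]
which follows by the same symmetry computation already used in the excerpt to characterize $(\mathcal{L}^s_\epsilon)^{1/2}$. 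Similarly, by \eqref{operatoronehalf} applied to $\Psi(u)$,
\[
\int_{\RN}\bigl|(\mathcal{L}^s_\epsilon)^{\frac{1}{2}}[\Psi(u)]\bigr|^2\,dx
= \frac{1}{2}\int_{\RN}\int_{\RN}\bigl(\Psi(u(x))-\Psi(u(y))\bigr)^2\,J_\epsilon^s(x-y)\,dx\,dy.
\]
Since $J_\epsilon^s \ge 0$, it suffices to prove the pointwise inequality
\[
\bigl(\psi(a)-\psi(b)\bigr)\bigl(a-b\bigr) \ge \bigl(\Psi(a)-\Psi(b)\bigr)^2
\qquad\text{for all } a,b\in\R,
\]
where $\psi' = (\Psi')^2 \ge 0$.

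For this pointwise step I would assume without loss of generality $a \ge b$ and write both sides as integrals along $[b,a]$. On the left, $\psi(a)-\psi(b) = \int_b^a \psi'(\tau)\,d\tau = \int_b^a (\Psi'(\tau))^2\,d\tau$, so the left-hand side equals $(a-b)\int_b^a (\Psi'(\tau))^2\,d\tau$. On the right, $\Psi(a)-\Psi(b) = \int_b^a \Psi'(\tau)\,d\tau$, so the right-hand side is $\left(\int_b^a \Psi'(\tau)\,d\tau\right)^2$. The desired inequality is then precisely the Cauchy–Schwarz inequality
\[
\left(\int_b^a \Psi'(\tau)\cdot 1\,d\tau\right)^2 \le \left(\int_b^a (\Psi'(\tau))^2\,d\tau\right)\left(\int_b^a 1^2\,d\tau\right) = (a-b)\int_b^a (\Psi'(\tau))^2\,d\tau,
\]
valid since $a-b \ge 0$. (When $a=b$ both sides vanish.) Substituting this back into the double integral yields \eqref{GenStroockVarAprox}.

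The only genuine issue is justifying the manipulations under the integral sign: one must check that $\psi(u) \in H^s_\epsilon(\RN)$ (or at least that $\psi(u)\,\mathcal{L}^s_\epsilon[u]$ and the symmetrized integrand are integrable) so that the symmetrization and the identity \eqref{operatoronehalf} for $\Psi(u)$ are legitimate. This is where the hypothesis $u \in H^s_\epsilon(\RN)$ together with $\psi \in C^1$, $\psi' \ge 0$ enters: one estimates $|\psi(u(x))-\psi(u(y))| \le \|\psi'\|_{L^\infty}\,|u(x)-u(y)|$ on the relevant range of values (or uses a truncation/approximation of $\psi$ by functions with bounded derivative and passes to the limit by monotone/dominated convergence, using that the kernel $J_\epsilon^s$ is nonsingular and that $u\in L^2$). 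Once integrability is secured, the rest is the elementary Cauchy–Schwarz argument above, so I expect the functional-analytic bookkeeping — not the core inequality — to be the main thing to handle carefully.
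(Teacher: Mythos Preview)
Your proposal is correct and follows essentially the same route as the paper: symmetrize the bilinear form, reduce to the pointwise inequality $(\psi(a)-\psi(b))(a-b)\ge(\Psi(a)-\Psi(b))^2$, and prove the latter via the Fundamental Theorem of Calculus plus Cauchy--Schwarz, then invoke \eqref{operatoronehalf} for $\Psi(u)$. Your additional remarks on integrability and the need to justify the symmetrization go slightly beyond what the paper writes out, but the core argument is identical.
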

\begin{proof}
 We have that:
\begin{align*}\int_{\RN}\psi(u)\mathcal{L}^s_\epsilon [u] dx &=C_{N,s}\int_{\RN}\int_{\RN}\psi(u(x)) \frac{u(x)-u(y)}{(|x-y|^2+\epsilon^2)^{\frac{N+2s}{2}}}\, dxdy\\
&=\frac{C_{N,s}}{2}\int_{\RN}\int_{\RN} \left[\psi(u(x))-\psi(u(y)) \right] \frac{u(x)-u(y)}{(|x-y|^2+\epsilon^2)^{\frac{N+2s}{2}}}dx dy.
\end{align*}
Now, we use that if $\psi$ is such that $\psi'\ge 0$ and $\psi'=(\Psi')^2$, then
$$\left( \psi(a)-\psi(b)\right) (a-b) \ge \left( \Psi(a)-\Psi(b)\right)^2, \quad \forall a,b \in \RN. $$
For convenience, we give the proof of this pointwise inequality based on the Fundamental Theorem of Calculus and the Cauchy-Schwarz Inequality:
\begin{align*}
\left( \Psi(a)-\Psi(b)\right)^2 &= \left( \int_{b}^a \Psi'(z)dz \right)^2 \le  (a-b)\int_{b}^a \left( \Psi'(z)\right)^2 dz \\
&= (a-b)\int_{b}^a \psi'(z) dz = (a-b) (\psi(a)-\psi(b)).
\end{align*}
We deduce, using \eqref{operatoronehalf}, that
\begin{align*}\int_{\RN}\psi(u)\mathcal{L}_\epsilon (u) dx & \ge \frac{C_{N,s}}{2} \int_{\RN}\int_{\RN} \frac{\left[\Psi(u(x))-\Psi(u(y)) \right]^2 }{(|x-y|^2+\epsilon^2)^{\frac{N+2s}{2}}}dx dy= \int_{\RN}\left|(\mathcal{L}^s_\epsilon)^{\frac{1}{2}}\Psi (u(x)) \right|^{2} dx.
\end{align*}
\end{proof}
\begin{remark}{\rm (i) We refer to \cite{dTEnJa16b} for a related result with more general nonlinearities and nonlocal operators. \\
\noindent (ii) Note that we recover the classical Stroock-Varopoulos Inequality for $\mathcal{L}_\epsilon$ by taking $\psi(u)=|u|^{q-2}u$:
$$
\int_{\RN}|u|^{q-2}u \,\mathcal{L}^s_\epsilon (u) dx \ge \frac{4(q-1)}{q^2}\int_{\RN}\left|(\mathcal{L}^s_\epsilon)^{1/2} (u^{q/2})\right|^{2} dx.
$$
\noindent We refer to Stroock \cite{StroockLargeDev}, Liskevich and Semenov \cite{LiskevichSemPAMS} where this kind of inequality is proved for  general sub-markovian operators. }
\end{remark}

\subsection{Approximation of the inverse fractional Laplacian \texorpdfstring{$(-\Delta)^{-s}$}{}, \texorpdfstring{$s\in (0,1)$}{}}\label{sec:aproxinverse}

  By using  \eqref{AproxFracLap} we introduce   an approximation for the inverse fractional Laplacian $(-\Delta)^{-s}$ and the nonlocal gradient $\nabla^{1-2s}$  that will play an important role in the sequel to solve the difficulties created by estimates like \eqref{FirstEnergy} in the range $m\ge 3$.   More precisely we propose to approximate $(-\Delta)^{-s}$ by $(-\Delta)^{-1} \mathcal{L}^{1-s}_\epsilon$ and $\nabla^{1-2s}$ by $\nabla (-\Delta)^{-1} \mathcal{L}^{1-s}_\epsilon$.
\begin{lemma}\label{LemmaAproxInverseFracLap}
a) Let $N\ge  1$, $s\in (0,1)$ and $s < \frac{N}{2}$.  Then for every $f \in L^1(\RN)$ such that $(-\Delta)^{-s}f\in L^2(\RN)$ we have that
\begin{equation*}
\mathcal{I}_{\epsilon}:=\int_{\RN} \left( (-\Delta)^{-1} \mathcal{L}^{1-s}_\epsilon [f] - (-\Delta)^{-s}f\right) \phi \,dx  \to 0 \quad \text{as}\quad \epsilon \to 0 , \quad \forall \phi \in C_c^{\infty}(\RN).
\end{equation*}

b)  Let $N\ge  1$, $s\in (0,1)$.  Then for every  $f \in L^1(\RN)$ such that $\nabla^{1-2s}f\in L^2(\RN)$  we have that
\begin{equation*}
\mathcal{I}_{\epsilon}:=\int_{\RN} \left(\nabla (-\Delta)^{-1} \mathcal{L}^{1-s}_\epsilon [f] - \nabla^{1-2s}f\right) \phi \,dx  \to 0 \quad \text{as}\quad \epsilon \to 0, \quad \forall \phi \in C_c^{\infty}(\RN).
\end{equation*}
\end{lemma}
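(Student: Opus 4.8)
The plan is to pass to the Fourier side and reduce both statements to the pointwise convergence of Fourier symbols together with an $\epsilon$-uniform bound that legitimizes dominated convergence. Write $J_\epsilon:=J_\epsilon^{1-s}$; since $J_\epsilon$ is radial and integrable, $\mathcal{L}^{1-s}_\epsilon[f]=\|J_\epsilon\|_{L^1(\RN)}f-J_\epsilon*f$, hence
\[
\widehat{\mathcal{L}^{1-s}_\epsilon[f]}(\xi)=m_\epsilon(\xi)\,\widehat f(\xi),\qquad m_\epsilon(\xi):=\int_{\RN}J_\epsilon^{1-s}(y)\bigl(1-\cos(\xi\cdot y)\bigr)\,dy .
\]
Since $f\in L^1(\RN)$, $\widehat f$ is bounded and continuous. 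I would first record the pointwise limit $m_\epsilon(\xi)\to|\xi|^{2(1-s)}$ as $\epsilon\to0$ for every $\xi\neq0$: this follows by monotone convergence, because the kernels $J_\epsilon^{1-s}$ increase to $C_{N,1-s}|y|^{-(N+2-2s)}$ as $\epsilon\downarrow0$, together with the classical identification of the Fourier symbol of $(-\Delta)^{1-s}$ with the normalization $C_{N,1-s}$ (see \cite{Hitch2012}); equivalently it is the Fourier-side content of the pointwise convergence $\mathcal{L}^{1-s}_\epsilon\to(-\Delta)^{1-s}$ recalled after \eqref{AproxFracLap}.

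The quantitative heart of the argument is the $\epsilon$-independent bound
\[
0\le m_\epsilon(\xi)\le C_N\,|\xi|^{2(1-s)}\qquad\text{for all }\xi\in\RN,\ \epsilon>0 .
\]
I would prove it by splitting the defining integral at the scale $|y|\sim|\xi|^{-1}$. On $\{|y|<|\xi|^{-1}\}$ one uses $1-\cos(\xi\cdot y)\le\tfrac12|\xi|^2|y|^2$ and $J_\epsilon^{1-s}(y)\le C_{N,1-s}|y|^{-(N+2-2s)}$, which yields a contribution bounded by a constant times $|\xi|^{2}\int_0^{1/|\xi|}r^{2s-1}\,dr\simeq|\xi|^{2(1-s)}$, finite since $2s>0$; on $\{|y|>|\xi|^{-1}\}$ one uses $1-\cos\le2$ and the same kernel bound, giving a contribution bounded by a constant times $\int_{1/|\xi|}^{\infty}r^{2s-3}\,dr\simeq|\xi|^{2(1-s)}$, finite since $2s<2$. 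This is the step I expect to be the main obstacle; everything else is routine.

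With this in hand, part (a) follows by Parseval. Here the hypothesis $(-\Delta)^{-s}f\in L^2(\RN)$ makes that term an honest $L^2$ function with $\widehat{(-\Delta)^{-s}f}=|\xi|^{-2s}\widehat f$, while $\widehat{(-\Delta)^{-1}\mathcal{L}^{1-s}_\epsilon[f]}=m_\epsilon(\xi)|\xi|^{-2}\widehat f(\xi)$ is locally integrable near $\xi=0$ precisely because $m_\epsilon(\xi)|\xi|^{-2}\le C_N|\xi|^{-2s}$ with $2s<N$ (this is where $s<N/2$ enters), so $(-\Delta)^{-1}\mathcal{L}^{1-s}_\epsilon[f]$ is a well-defined tempered distribution (agreeing with the Riesz-potential definition). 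Then
\[
\mathcal{I}_\epsilon=c_N\int_{\RN}\Bigl(\tfrac{m_\epsilon(\xi)}{|\xi|^{2}}-\tfrac{1}{|\xi|^{2s}}\Bigr)\widehat f(\xi)\,\widehat\phi(-\xi)\,d\xi ,
\]
whose integrand tends to $0$ pointwise and is dominated, uniformly in $\epsilon$, by $2C_N|\xi|^{-2s}\|f\|_{L^1(\RN)}|\widehat\phi(-\xi)|\in L^1(\RN)$, since $|\xi|^{-2s}$ is integrable near the origin ($2s<N$) and $\widehat\phi\in\mathcal{S}(\RN)$ decays at infinity; dominated convergence gives $\mathcal{I}_\epsilon\to0$.

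Part (b) is identical after replacing the scalar multipliers by the vector ones of $\nabla(-\Delta)^{-1}\mathcal{L}^{1-s}_\epsilon$ and $\nabla^{1-2s}=\nabla(-\Delta)^{-s}$, namely $i\xi|\xi|^{-2}m_\epsilon(\xi)\widehat f(\xi)$ and $i\xi|\xi|^{-2s}\widehat f(\xi)$. The relevant uniform bound becomes $|\xi|\,|\xi|^{-2}m_\epsilon(\xi)\le C_N|\xi|^{1-2s}$, and $|\xi|^{1-2s}$ is integrable near the origin for \emph{every} $s\in(0,1)$ (because $1-2s>-1\ge-N$); the hypothesis $\nabla^{1-2s}f\in L^2(\RN)$ again makes the limit term a genuine $L^2$ function. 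This is why (b) requires no restriction on $s$ and in particular covers the nonlocal-gradient range $N=1$, $s\in[\tfrac12,1)$. Apart from the uniform bound on $m_\epsilon$, the only delicate point is keeping track, in low dimensions, of the meaning of $(-\Delta)^{-1}$ applied to the merely $L^1$ function $\mathcal{L}^{1-s}_\epsilon[f]$, which the blanket use of Fourier transforms of tempered distributions resolves.
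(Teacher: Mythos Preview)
Your proof is correct and follows essentially the same Fourier-side dominated-convergence argument as the paper. The one simplification you missed: rather than splitting the integral defining $m_\epsilon$ at scale $|y|\sim|\xi|^{-1}$, the paper observes that the kernel monotonicity $J_\epsilon^{1-s}(y)\le C_{N,1-s}|y|^{-(N+2-2s)}$ (which you already invoked for the pointwise limit) immediately gives $m_\epsilon(\xi)\le \mathcal S_{(-\Delta)^{1-s}}(\xi)=|\xi|^{2(1-s)}$, so the ``main obstacle'' you anticipated is in fact a one-line comparison.
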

\begin{proof}a)
Given any operator $T$, let $S_T(\xi)$ be the Fourier symbol associated to the operator $T$ whenever it is well defined.  Now, we  employ  Plancherel's Theorem to obtain:
\begin{equation*} \begin{split}
\mathcal{I}_{\epsilon} &= \int_{\RN}  \left( \mathcal{S}_{(-\Delta)^{-1}}(\xi) \mathcal{S}_{\mathcal{L}^{1-s}_\epsilon}(\xi)  -  \mathcal{S}_{(-\Delta)^{-s}}(\xi) \right) \widehat{f} \widehat{ \phi} d\xi  =: \int_{\RN}  F_\epsilon(\xi) d\xi.
\end{split}
\end{equation*}
We want to pass to the limit as $\epsilon \to0$ in $\mathcal{I}_{\epsilon}$. For that purpose we need to find an $L^1$ dominating function for $F_\epsilon$. We recall that for $s\in(0,1)$ we have that
\begin{equation}\label{FourSym}
\mathcal{S}_{\mathcal{L}^{1-s}_\epsilon}(\xi)=\int_{|z|>0} \frac{1-\cos(z\cdot \xi)}{(|z|^2+\epsilon^2)^{\frac{N+2(1-s)}{2}}}dz \quad \textup{and} \quad \mathcal{S}_{(-\Delta)^{1-s}}(\xi)=\int_{|z|>0} \frac{1-\cos(z\cdot \xi)}{|z|^{N+2(1-s)}}dz \sim |\xi|^{2(1-s)}.
\end{equation}
Moreover $\mathcal{S}_{(-\Delta)^{-s}}(\xi)=\mathcal{S}_{(-\Delta)^{-1}}(\xi)\mathcal{S}_{(-\Delta)^{1-s}}(\xi)$. Note that $0\leq\mathcal{S}_{\mathcal{L}^{1-s}_\epsilon}(\xi)\leq \mathcal{S}_{(-\Delta)^{1-s}}(\xi)$ for every $\xi \in \RN$. Then
\begin{equation*} \begin{split}   |F_\epsilon(\xi) | &\le
 \left|  \mathcal{S}_{(-\Delta)^{-1}}(\xi) \mathcal{S}_{\mathcal{L}^{1-s}_\epsilon}(\xi)  \right|  \left| \widehat{f}  \right|  \left|\widehat{ \phi} \right|  +  \left|  \mathcal{S}_{(-\Delta)^{-s}}(\xi)  \right|   \left| \widehat{f}  \right|  \left|\widehat{ \phi} \right| \\
 & \le \left|  \mathcal{S}_{(-\Delta)^{-1}}(\xi)
 \mathcal{S}_{(-\Delta)^{1-s}}(\xi)  \right|  \left| \widehat{f}  \right|  \left|\widehat{ \phi} \right|  +  \left|  \mathcal{S}_{(-\Delta)^{-s}}(\xi)  \right|   \left| \widehat{f}  \right|  \left|\widehat{ \phi} \right| \\
 &= 2 \left|  \mathcal{S}_{(-\Delta)^{-s}}(\xi) \widehat{f}  \widehat{ \phi} \right|  \, \leq \, C |\xi|^{-2s}  \left|\widehat{f}\right| \left| \widehat{\phi}\right|.
\end{split}
\end{equation*}
We conclude that $|F_\epsilon(\xi) |\leq G(\xi,t):=C\left| |\xi|^{-2s} \widehat{f}  \widehat{\phi}\right| \in L^1(\RN)$ since $\widehat{f}\in L^\infty(\RN)$ and  $\widehat{\phi}\in \mathcal{S}(\RN)$, the Schwartz space of rapidly decaying functions. Moreover, we can see from \eqref{FourSym} that $F_\epsilon(\xi) \to 0 $ pointwise as $\epsilon \to 0$. Then we use the Dominated Convergence Theorem to conclude that
$|\mathcal{I}_{\epsilon} | \to 0$ as $\epsilon \to 0.$

b) The proof follows as above noting that $\mathcal{S}_{\nabla}= i \xi$ and
$|F_\epsilon(\xi) |\leq C\left| |\xi|^{1-2s} \widehat{f}  \widehat{\phi}\right| \in L^1(\RN)$ .

\end{proof}

\section{Existence of weak solutions via approximating problems}\label{SectExist}

In order to prove existence of weak solutions of Problem \eqref{model1} we  proceed by considering an approximating problem. We  regularize the degeneracy of the nonlinearity, the singularity of the fractional operator, we also add a vanishing viscosity term to get more regularity and we restrict the problem to a bounded domain.    We write the equation in the form
$$
u_t= \nabla \cdot(u^{m-1} \nabla (-\Delta)^{-1} (-\Delta)^{1-s}u).
$$
The idea is to consider the approximation of the $(-\Delta)^{1-s}$ given by \eqref{AproxFracLap}, that is
$$
\mathcal{L}^{1-s}_\epsilon (u)(x)=C_{N,1-s}\int_{\RN}\frac{u(x)-u(y)}{\left(|x-y|^2+\epsilon^2\right)^{\frac{N+2-2s}{2}}}dy,
$$
defined for functions $u$ in the natural space $  L^2(\R^N) $. We consider the approximating problem
\[
\left\{
\begin{array}{ll}
(U_1)_t= \delta \Delta U_1 +\nabla \cdot((U_1+\mu)^{m-1}\nabla (-\Delta)^{-1} \mathcal{L}^{1-s}_\epsilon [U_1])&\text{for } (x,t)\in B_R \times (0,T),\\
U_1(x,0)=\widehat{u}_0(x) &\text{for } x \in B_R,\\
U_1(x,t)=0 &\text{for } x\in \partial B_R, \ t\in (0,T),
\end{array}
\right.
\tag{$P_{\epsilon\delta\mu R}$}\label{ProblemEpsMuDeltaR}
\]
with parameters $\epsilon,\delta,\mu, R>0$. We use the notation $B_R:=B_R(0)$. The initial data $\widehat{u}_0$ is a smooth approximation of $u_0$.
 We recall that the operator $ \mathcal{L}^{1-s}_\epsilon [U_1]$ is defined by formula \eqref{AproxFracLap} extending the function $U_1$ by $0$ on $\RN \setminus B_R$ as in Section \ref{AproxFracLap}. Moreover, $U_1 \in L^2(0,T:H_0^1(B_R))$ as we will prove in formula \eqref{energy6}, therefore it has the right decay at the boundary $\partial B_R$ that  allows its extension by $0$.

 The existence of a weak solution of Problem \eqref{model1} is done by passing to the limit step-by-step in the approximating problems as follows. We denote by $U_1$ the solution of the approximating Problem \eqref{ProblemEpsMuDeltaR} with parameters $\epsilon,\delta,\mu,R$. Afterwards, we obtain $U_2=\lim_{\epsilon\to 0}U_1$ and $U_2$  solves an approximating Problem \eqref{ProblemMuDeltaR} with parameters $\delta,\mu,R$. Next, we take $U_3=\lim_{R\to \infty}U_2$ that will be a solution of Problem \eqref{ProblemMuDelta}, $U_4:=\lim_{\mu \to 0}U_3$ solving Problem \eqref{ProblemDelta}. Finally we obtain $u=\lim_{\delta \to 0}U_4$ which solves Problem \eqref{model1}.
Notice that the $\delta\to 0$ is the last limit considered in the approximation process. This is because the $\delta \Delta$-term gives $H^1_0(B_R)$ regularity for $U_1$ and $U_2$, respectively $H^1(\RN)$ for $U_3$ and $U_4$. Thus $U_1$ and $U_2$ will be solutions to Dirichlet problems with homogenous boundary conditions. The $H^1_0(B_R)$ regularity allows their extension by $0$ to $\RN \setminus B_R$ and thus the nonlocal operators involved in the equations are properly defined as in Sections \ref{Subsec:FracLap} and \ref{Subsec:AproxFracLap}.

%
%
%

\medskip

\noindent\textbf{Notations.}
 We will often use $\int_0^t f(t) dt$ to avoid introducing new variables. Also, we will use $\int_{\RN}$ instead of $\int_{B_R}$ when integrating some expressions of $U_1,U_2$, which are supported in $B_R$, by identifying these functions with $0$ outside the domain $B_R$. The homogeneous Dirichlet boundary conditions ensures that the integrals coincide.

We will use $\to$ for strong convergence and $\rightharpoonup$ for weak convergence. We will write $\sim$ and $\lesssim$ when multiplying by constants depending on $N,\delta,R,\epsilon$ and the norms $p,q$ that we will use. We will keep explicit the constants relevant in the proof. We will also avoid to write the variable $x$ and write just $v(t)$ when considering the norms in $x$.

\subsection{Existence of solutions of \texorpdfstring{\eqref{ProblemEpsMuDeltaR}}{}}
 We will use a standard technique: first we will prove that there exists a unique weak solution  by the method of fixed point of a contraction mapping. Then we show the regularity of the fixed point and prove that it is in fact a strong solution to the problem.  We give now the definitions of weak and strong solution for \eqref{ProblemEpsMuDeltaR}.

\begin{defn}
We say that $U_1 $ is a weak solution of Problem \eqref{ProblemEpsMuDeltaR} if: (i) $U_1 \in L^1(B_R \times (0,T))$ , (ii) $\nabla (-\Delta)^{-1} \mathcal{L}^{1-s}_\epsilon [U_1] \in L^1(B_R \times (0,T))$, (iii) $(U_1+\mu)^{m-1} \nabla (-\Delta)^{-1} \mathcal{L}^{1-s}_\epsilon [U_1] \in L^1(B_R \times (0,T))$ and
\begin{equation}\label{weaksolAprox}
\int_0^T\int_{B_R} U_1(\phi_t+\delta \Delta \phi)dxdt-\int_0^T\int_{B_R}  (U_1+\mu)^{m-1} \nabla (-\Delta)^{-1} \mathcal{L}^{1-s}_\epsilon [U_1]\cdot\nabla \phi dxdt+ \int_{B_R} \widehat{u}_0(x) \phi(x,0)dx=0
\end{equation}
 for smooth test functions $\phi$ that vanish on the spatial boundary $\partial B_R$ and $t=T$. We will say that $U_1$ is a strong solution if additionally $(U_1)_t,\Delta U_1, \nabla\cdot ((U_1+\mu)^{m-1} \nabla (-\Delta)^{-1} \mathcal{L}^{1-s}_\epsilon [U_1]) \in L^p(B_R\times(0,T))$ for some $p\geq1$ and \eqref{ProblemEpsMuDeltaR} is satisfied pointwise almost everywhere.
\end{defn}

 \subsubsection{Solution of a heat equation with forcing term}
We consider an arbitrary value of the unknown $U_1$ in the last term of \eqref{ProblemEpsMuDeltaR} and solve the following heat equation with a forcing term
\begin{equation}\label{eq:HEright}
u_t=\delta \Delta u +\nabla \cdot G(v) \quad \textup{with} \quad G(v)=(v+\mu)^{m-1}\nabla (-\Delta)^{-1} \mathcal{L}_\epsilon^{1-s}[v]
\end{equation}
with initial data $u(x,0)=\widehat{u}_0(x)$ for $x\in B_R$ and lateral data $u(x,t)=0$ for $(x,t)\in B_R^c\times(0,T)$. We recall that $\widehat{u}_0(x)$ is a smooth approximation of $u_0$ but we will only use the  $L^p$ norms of $\widehat{u}_0$ and $\nabla \widehat{u}_0$. In order to apply of the fixed point theorem we will choose $v$ in a convenient  functional space and solve \eqref{eq:HEright} to find $u$. We want to define a mapping $\mathcal{T}:v\mapsto u$ and we will prove that $\mathcal{T}$ has a fixed point.

\begin{proposition}\label{prop:welldefT}
Let $X=L^1(B_R) \cap L^\infty(B_R)$. Then $\mathcal{T}$ is well defined from $X_T:=C([0,T]:X)$ into $X_T$ for all $T>0$. Moreover, for every $v\in X_T\cap L^2([0,T], H^1_0(B_R))$, we have that $u=\mathcal{T}(v)$ is a strong solution of \eqref{eq:HEright} with the given initial and lateral data. We have also precise estimates for $\mathcal{T}$.
\end{proposition}
Before proving the result above, we need the following lemma:

\begin{lemma}\label{lem:GX}
For every $v\in X_T$ we have that $G(v)\in X_T$ with $\|G(v)\|_{X_T}\leq C \|v\|_{X_T}$ where $C=C(\|v\|_{L^\infty(Q_T)})$.
\end{lemma}
\begin{proof} Here $T$ is arbitrary and we denote $Q_T=B_R\times[0,T]$.  It is enough to prove the result for fixed time, and the continuity in time follows easily.
By Lemma \ref{Lemma:nonlocal} we have that
\[
\|\nabla (-\Delta)^{-1}\mathcal{L}_\epsilon^{1-s}v(\cdot,t)\|_{X} \lesssim \left\| v(\cdot,t)\right\|_{X}.\]
Taking supremums in $t\in[0,T]$ in the above equation we get $
\|\nabla (-\Delta)^{-1}\mathcal{L}_\epsilon^{1-s}v\|_{X_T}\lesssim \|v\|_{X_T}$. From here we conclude that
\[
\|G(v)\|_{X_T}\leq \|v+\mu\|^{m-1}_{L^\infty(Q_T)}\|\nabla (-\Delta)^{-1}\mathcal{L}_\epsilon^{1-s}v\|_{X_T}\lesssim\
 C \|v\|_{X_T}.
\]
\end{proof}
\begin{proof}[Proof of Proposition \ref{prop:welldefT}]

(i) The standard theory for the heat equation  (see for instance \cite{Pazy})   says that given such forcing term $F:=\nabla \cdot G(v)$, there exists a unique weak solution $u\in X_T$ of the above initial and boundary value problem. Moreover, by the regularity theory, we also know that $\nabla u\in L^p(Q_T)$ for every $p\in[1,\infty)$ since $G(v)\in L^p(Q_T)$. We can express the weak solution by means of the Duhamel formula:
\begin{equation*}
u(x,t)=\underbrace{e^{\delta t \Delta}\widehat{u}_0(x) + \int_0^t \nabla e^{\delta (t-\tau) \Delta} \cdot  G(v)(x,\tau)  d\tau}_{\mathcal{T}(v)}, \quad G(v)= (v+\mu)^{m-1}\nabla (-\Delta)^{-1} \mathcal{L}_\epsilon^{1-s}[v],
\end{equation*}
where $e^{ t \Delta}$ is the Heat Semigroup corresponding to the homogenous Dirichlet problem in the ball $B_R$.
This formula will be convenient to perform a priori estimates needed for the fixed point argument. When $v\in L^2([0,T], H^1_0(B_R))$ we can work out the expression for $F$
\[
F=\nabla(v+\mu)^{m-1}\cdot \nabla (-\Delta)^{-1} \mathcal{L}_\epsilon^{1-s}[v]-(v+\mu)^{m-1} \mathcal{L}_\epsilon^{1-s}[v].
\]
It follows that $F\in L^2(Q_T)$. The standard heat equation theory now implies that $u$ is a strong solution of the problem and $u_t,\Delta u \in L^2(Q_T)$.

(ii) We now prove that for $v\in X_T$ we have $\mathcal{T}(v(t)) \in X$ for all $t \in [0,T]$ with precise estimates.
We will need some decay properties of the Heat Semigroup in $B_R$.
Using classical estimates on the Green function for the heat operator in a bounded domain   \cite[p.413, Th. 16.3]{Ladyz}   we have that for $1 \le p\le \infty$
\begin{equation}\label{estim:heat1}
\|e^{ t \Delta}v\|_{L^p(B_R)} \le \|v\|_{L^p(B_R)}\quad \textup{and} \quad
\|\nabla e^{ t \Delta}v\|_{L^p(B_R)} \le  t^{-\frac{1}{2}} \|v\|_{L^p(B_R)}.
\end{equation}
Let now $v\in C([0,T]: X)$. Using the heat kernel estimates and Lemma \ref{Lemma:nonlocal}
\begin{align*}
\|\mathcal{T}(v(t))\|_{X} &\le \|\widehat{u}_0\|_X + \int_0^t (t-\tau)^{-\frac{1}{2}} \|G(v(\tau))\|_{X} d\tau\le \|\widehat{u}_0\|_X+ t^{1/2}\sup_{0\le \tau\le t} \|G(v(\tau))\|_{X} d\tau\\
&\le \|\widehat{u}_0\|_X + C t^{1/2}\sup_{0\le \tau\le t} (\|v(\tau)\|_{L^\infty(B_R)}+\mu)^{m-1}  \|v(\tau)\|_ {X}  <\infty.
\end{align*}
(iii) Moreover  $\mathcal{T}(v)$ is continuous with respect to $t$. Indeed, we have that
\begin{align*}
\mathcal{T}(v)(x,t+h) - \mathcal{T}(v)(x,t)  &= e^{\delta ( t+h) \Delta}u_0(x) - e^{\delta t \Delta}u_0(x)  + \int_t^{t+h} \nabla e^{\delta (t+h-\tau) \Delta} \cdot  G(v)(x,\tau)  d\tau \\
&+\int_0^{t} \nabla e^{\delta (t-\tau) \Delta} \cdot (e^{\delta h \Delta} G(v)(x,\tau)  - G(v)(x,\tau) ) d\tau =I+II+III.
\end{align*}

We want to prove that
$\|\mathcal{T}(v)(\cdot,t+h) - \mathcal{T}(v)(\cdot,t)\|_{L^1(B_R)\cap L^\infty(B_R)}  \to 0$ as $h\to 0$. For $p=\{1,\infty\}$, the $L^p$ norms of $I$ and $III$ go to $0$ as $h\to 0$ since the Heat Semigroup is well defined in this space: $\|e^{\delta h \Delta} (f) -f\| \to 0$.
For the second term we should use the decay of the Heat kernel \eqref{estim:heat1}
\begin{align*}
\|II\|_{L^p} &\lesssim \int_t^{t+h}   (t+h-\tau)^{-1/2}  \| G(v(\tau))\|_{L^p(B_R)}  d\tau \\ \nonumber
&\lesssim   h^{1/2}  \sup_{(0,T)}\|G(v)(\cdot, t)\|_{L^p(B_R)}   \to 0 \quad \text{as }h \to 0.
\end{align*}
\end{proof}
\subsubsection{Local in time contraction and existence of a fixed point}

\begin{proposition}\label{Prop:Mild}
Let $K=2 \|\widehat{u}_0\|_{X}$ and denote by  $\overline{B_K}$ the closed ball of radius $K$ centered  at $0$ in the space $X_T=C([0,T]:X)$. There exists $T=T(\|\widehat{u}_0\|_{L^\infty(B_R)})$ small enough such that $\mathcal{T}$ is a contraction in $\overline{B_K}\subset X_T$. Therefore, $\mathcal{T}$ has a fixed point in $\overline{B_K}\subset X_T$. More precisely, we can take $T\leq C(K+\mu)^{2(1-m)}$.
\end{proposition}
 \begin{proof}

 First we prove that $\mathcal{T}$ maps $\overline{B_K}$ into $\overline{B_K}$. Indeed, for $v\in  B_K$ we have that
$$\|\mathcal{T}(v(t))\|_{L^1(B_R)} \le \|u_0\|_{L^1(B_R)} + T^{1/2}\sup_{0\le \tau\le T} (\|v(\tau)\|_{L^\infty(B_R)}+\mu)^{m-1}  \|v(\tau)\|_ {X}  \le K$$
Indeed, if  $6 \, T^{1/2}  (K+\mu)^{m-1}\le 1$ we have that    $\mathcal{T}$ is a strict contraction mapping in $\overline{B_K}$. The proof is as follows.
Let $u_1,u_2 \in \overline{B_K}$. Then
\begin{align*}
(\mathcal{T}(u_2)- \mathcal{T}(u_1))(x,t)&=\int_0^t \nabla e^{\delta (t-\tau) \Delta} \cdot (u_2(\tau)+\mu)^{m-1}\nabla (-\Delta)^{-1} \mathcal{L}_\epsilon^{1-s}[u_2-u_1](\tau) d\tau \\
&+  \int_0^t \nabla e^{\delta (t-\tau) \Delta} \cdot \left((u_2(\tau)+\mu)^{m-1}-(u_1(\tau)+\mu)^{m-1}\right)\nabla (-\Delta)^{-1} \mathcal{L}_\epsilon^{1-s}[u_1](\tau) d\tau.
\end{align*}
Then, for any $1\le p\le \infty$,
\begin{align*}
\|(\mathcal{T}(u_2)&-\mathcal{T}(u_1))(x,t)\|_{L^p(B_R)} \le \int_0^t \|\nabla e^{\delta (t-\tau) \Delta} \cdot (u_2(\tau)+\mu)^{m-1}\nabla (-\Delta)^{-1} \mathcal{L}_\epsilon^{1-s}[u_2-u_1](\tau)\|_{L^p(B_R)} d\tau \\ \nonumber
&+  \int_0^t \|\nabla e^{\delta (t-\tau) \Delta} \cdot \left((u_2(\tau)+\mu)^{m-1}-(u_1(\tau)+\mu)^{m-1}\right)\nabla (-\Delta)^{-1} \mathcal{L}_\epsilon^{1-s}[u_1](\tau) \|_{L^p(B_R)}d\tau.
\end{align*}
Using one again \eqref{estim:heat1} we get
\begin{multline}\label{LpContraction}
\|(\mathcal{T}(u_2)-\mathcal{T}(u))(\cdot,t)\|_{L^p(B_R)} \le \int_0^t
 (t-\tau)^{-\frac{1}{2}} \| (u_2+\mu)^{m-1}\nabla (-\Delta)^{-1} \mathcal{L}_\epsilon^{1-s}[u_2-u_1]\|_{L^p(B_R)}  (\tau) d\tau \\
+  \int_0^t(t-\tau)^{-\frac{1}{2}} \| \left((u_2+\mu)^{m-1}-(u_1+\mu)^{m-1}\right)\nabla (-\Delta)^{-1} \mathcal{L}_\epsilon^{1-s}[u_1] \|_{L^p(B_R)}(\tau) d\tau.
\end{multline}
For the first term we use the estimates of Lemma \ref{Lemma:nonlocal}, taking into account that $u_1,u_2$ are in fact supported in the ball,  to show that for $p\in \{1,\infty\}$ we have
\begin{align*}
 \| (u_2+\mu)^{m-1}\nabla (-\Delta)^{-1} \mathcal{L}_\epsilon^{1-s}[u_2-u_1]
\|_{L^p( B_R)}   &\le (\| u_2\|_{L^\infty(B_R)} +\mu)^{m-1}
\| \nabla (-\Delta)^{-1} \mathcal{L}_\epsilon^{1-s}[u_2-u_1]\|_{L^p( B_R)} \\
&\lesssim   (\| u_2\|_{L^\infty(B_R)} +\mu)^{m-1} \|u_2-u_1\|_ {X} .
\end{align*}
Similarly, for the second term in \eqref{LpContraction}, we use Lemma \ref{Lemma:nonlocal}, to get
\begin{align*}
& \| \left((u_2+\mu)^{m-1}-(u_1+\mu)^{m-1}\right)\nabla (-\Delta)^{-1} \mathcal{L}_\epsilon^{1-s}[u_1] \|_{L^1(\RN)} \\
&\qquad \qquad \qquad \qquad\le
\| (u_2+\mu)^{m-1}-(u_1+\mu)^{m-1}\|_{L^\infty(\RN)}
\| \nabla (-\Delta)^{-1} \mathcal{L}_\epsilon^{1-s}[u_1]\|_{L^1( \RN)}  \\
&\qquad \qquad\qquad \qquad \lesssim \| u_2-u_1\|_{L^\infty(B_R)} \cdot \max(\mu^{m-2}, (\|u_1\|_\infty +\mu)^{m-2}, (\|u_2\|_\infty +\mu)^{m-2} )
 \|u_1\|_ {X}.
\end{align*}
Summing up, if $6  T^{1/2}  (K+\mu)^{m-1}  \le 1$ we have that
\begin{equation*}
\begin{split}
\|(\mathcal{T}(u_2)-\mathcal{T}(u_1))(x,t)\|_{L^1(B_R)}
&\lesssim  t^{1/2}  \sup_{0<\tau <t } \| (u_2+\mu)^{m-1}\nabla (-\Delta)^{-1} \mathcal{L}_\epsilon^{1-s}[u_2-u_1] (\tau)\|_{L^1(\RN)} \\
 + &t^{1/2}  \sup_{0<\tau <t }  \| \left((u_2+\mu)^{m-1}-(u_1+\mu)^{m-1}\right)\nabla (-\Delta)^{-1} \mathcal{L}_\epsilon^{1-s}[u_1] (\tau)\|_{L^1(\RN)} \\
&\lesssim 6  T^{1/2}  (K+\mu)^{m-1} \| u_2-u_1\|_{X}  \le  \| u_2-u_1\|_{X}.
\end{split}
\end{equation*}
The estimate of $\|(\mathcal{T}(u_2)-\mathcal{T}(u_1))(x,t)\|_{L^\infty(B_R)}$ follows similarly by taking $p=\infty$ in \eqref{LpContraction} and using Lemma \ref{Lemma:nonlocal}. Thus,  the mapping $\mathcal{T}$ is a strict contraction on $\overline{B_K}$ if  $6  T^{1/2}  (K+\mu)^{m-1}  \le 1$:
\begin{align*}
\|(\mathcal{T}(u_2)-\mathcal{T}(u_1))\|_{C([0,T]:X)} < \frac{1}{2} \|u_2-u_1\|_{C([0,T]:X)} .
\end{align*}

 \end{proof}

 \subsubsection{Local in time improved regularity of the fixed point and strong solution}
 Using the formulation of $u=\mathcal{T}(v)$ as a strong solution of the initial and lateral data problem for \eqref{eq:HEright},  multiplying by $u$,  and integrating, we get the identity
 \[
 \frac{1}{2}\int_{B_R} |u(T)|^2d x+ \delta \int_0^T\int_{B_R} |\nabla u(t)|^2 dx dt= \int_0^T \int_{B_R} G(v(t))\cdot \nabla u(t) dx dt+ \frac{1}{2}\int_{B_R}|\widehat{u_0}|^2 dx
 \]
 We now use Lemma \ref{lem:GX} so that $\|G(v)\|_{X_T}\leq C(\|v\|_{L^\infty(Q_T)}) \|v\|_{X_T}$ and since we take $\|v\|_{X_T}\leq K$ then $\|G(v)\|_{X_T}\leq C(K)$. Also the last term is bounded by $C(K)$. Using now Young's inequality on the first term of the right-hand side to absorb one term into the term with $|\nabla u(t)|^2$,  we get
 \[
  \int_0^T\int_{B_R} |\nabla u(t)|^2 dx dt\leq C(K,\delta),
 \]
 which means that in all the steps of this iteration $\nabla u\in L^2(Q_T)$ with a  uniform bound depending on $K$ and $\delta$ since $G(v)$ is uniformly bounded in $X_T$. In the limit of the iteration process that leads to the fixed point, we conclude that such a fixed point $u\in L^2([0,T]: H^1_0(B_R))$ with a uniform bound estimated by $K$. It is now easy to see that $u$ is indeed a strong solution of \eqref{ProblemEpsMuDeltaR}. This is what we take as $U_1$. Note that, for the moment, $U_1$ is only defined locally in time. In order to prove existence for all times, we need some properties that will be derived next.

\subsubsection{Nonnegativity and \texorpdfstring{$L^p$}{} decay of the local in time solution} Standard arguments shows that if $\widehat{u}_0$ is nonnegative, then $U_1$ is also nonnegative. Similarly, we get that the $L^\infty$ norm of the solution is nonincreasing. Moreover, given $T$ prescribed by Proposition \ref{Prop:Mild}, we have for all $0<t<T$ the following estimates for the $L^p$ of the strong solution $U_1$:
\begin{align*}
& \displaystyle\frac{d}{dt}\int_{B_R} U_1^p(x,t)dx=p\int_{B_R} U_1^{p-1} (U_1)_t dx = \\
&=- p\delta\int_{B_R}\nabla (U_1^{p-1})\cdot\nabla U_1 dx  - p \int_{B_R} \nabla U_1^{p-1} (U_1+\mu)^{m-1}\cdot \nabla (-\Delta)^{-1} \mathcal{L}^{1-s}_\epsilon [U_1]dx\\
&=- \frac{4(p-1)\delta}{p}\int_{B_R}\left|\nabla (U_1^{p/2})\right|^2 dx  - p (p-1)\int_{B_R}  U_1^{p-2} (U_1+\mu)^{m-1} \nabla U_1  \cdot \nabla (-\Delta)^{-1} \mathcal{L}^{1-s}_\epsilon [U_1]dx.
\end{align*}
The boundary terms are $0$ since $U_1=0$ on $\RN \setminus  B_R$. We analyze the second term:
\begin{align*}&\int_{B_R}  U_1^{p-2} (U_1+\mu)^{m-1} \nabla U_1 \cdot \nabla (-\Delta)^{-1} \mathcal{L}^{1-s}_\epsilon [U_1]dx =
\int_{B_R}  \nabla \psi(U_1) \cdot \nabla (-\Delta)^{-1} \mathcal{L}^{1-s}_\epsilon [U_1]dx \\
&=\int_{B_R}  \psi(U_1) (-\Delta) (-\Delta)^{-1} \mathcal{L}^{1-s}_\epsilon [U_1]dx = \int_{B_R}  \psi(U_1) \mathcal{L}^{1-s}_\epsilon [U_1]dx \\
&  = \int_{\RN}  \psi(U_1) \mathcal{L}^{1-s}_\epsilon [U_1]dx  \ge  \int_{\RN} |(\mathcal{L}^{1-s}_\epsilon)^{\frac{1}{2}} [ \Psi (U_1)]|^2dx
\end{align*}
We have used the generalized Stroock-Varopoulos Inequality \eqref{GenStroockVarAprox} in the following context:  the functions $\psi$ and $\Psi$ are such that $\psi'=(\Psi')^2$ and $\nabla \psi(U_1)=U_1^{p-2} (U_1+\mu)^{m-1}\nabla U_1$. The precise definition of these functions is given by
\[
\displaystyle \psi(z)=\int_0^z \zeta^{p-2}(\zeta+\mu)^{m-1}d\zeta, \quad \displaystyle \Psi(z)=\int_0^z \zeta^{\frac{p-2}{2}}(\zeta+\mu)^{\frac{m-1}{2}}d\zeta.
\]
We obtain the following $L^p$-energy estimate:
\begin{align}\label{energy6}
\int_{B_R} u_0^p(x)dx - \int_{B_R} U_1^p(x,t)dx &= \frac{4(p-1)\delta}{p}\int_0^t \int_{B_R} \left|\nabla (U_1^{p/2})\right|^2 dxdt \\
\nonumber &\qquad + p (p-1) \int_0^t \int_{B_R}  \psi(U_1) \mathcal{L}^{1-s}_\epsilon [U_1]dxdt,
\end{align}
and then
\begin{equation*}\begin{split}
\int_{B_R} u_0^p(x)dx &\ge  \int_{B_R} U_1^p(x,t)dx + \\
&  + \frac{4(p-1)\delta}{p}\int_0^t \int_{B_R} \left|\nabla (U_1^{p/2})\right|^2 dxdt + p (p-1) \int_0^t \int_{B_R}  |(\mathcal{L}^{1-s}_\epsilon)^{\frac{1}{2}} [\Psi (U_1)]|^2 dx dt.
\end{split}
\end{equation*}
As a consequence, we get that $(\mathcal{L}^{1-s}_\epsilon)^{\frac{1}{2}} [\Psi (U_1)] \in L^2(Q_T)$ for $u_0 \in L^p(\RN).$

\medskip

We also get the so-called \textbf{second energy estimate}:
\begin{equation*}
\begin{split}
&\frac{1}{2} \frac{d}{dt}\int_{B_R}|\SqLop [U_1]|^2 dx =\int_{B_R}\SqLop[U_1] \cdot \SqLop [(U_1)_t] dx\\
&= \int_{B_R}(-\Delta)^{-1} \mathcal{L}^{1-s}_\epsilon [U_1] \, (U_1)_t dx \\
&= \delta \int_{B_R}(-\Delta)^{-1} \mathcal{L}^{1-s}_\epsilon [U_1]  \Delta U_1  dx
+ \int_{B_R}(-\Delta)^{-1} \mathcal{L}^{1-s}_\epsilon [U_1] \nabla \cdot((U_1+\mu)^{m-1} \nabla (-\Delta)^{-1} \mathcal{L}^{1-s}_\epsilon [U_1]) dx \\
&= - \delta \int_{B_R}\left|(\mathcal{L}^{1-s}_\epsilon)^{\frac{1}{2}} [U_1]\right|^2  dx -
\int_{B_R} (U_1+\mu)^{m-1} | \nabla (-\Delta)^{-1} \mathcal{L}^{1-s}_\epsilon [U_1]|^2 dx .
\end{split}
\end{equation*}
Therefore, the quantity $\displaystyle \int_{B_R}|(-\Delta)^{-\frac{1}{2}}(\mathcal{L}^{1-s}_\epsilon)^{\frac{1}{2}} [U_1](x,t)|^2 dx $ is non-increasing in  $t$ and we have that
\begin{equation}\label{SecondEnergyU1}
\begin{split}
&\frac{1}{2} \int_{B_R}\left|\SqLop [u_0]\right|^2 dx = \frac{1}{2} \int_{B_R}\left|\SqLop [U_1(t)]\right|^2 dx  \\ &+
 \delta \int_0^t\int_{B_R}\left|(\mathcal{L}^{1-s}_\epsilon)^{\frac{1}{2}}[U_1]\right|^2  dx dt+
\int_0^t \int_{B_R}(U_1+\mu)^{m-1} \left| \nabla (-\Delta)^{-1} \mathcal{L}^{1-s}_\epsilon [U_1]\right|^2 dx  dt . \end{split}
\end{equation}

\subsubsection{Global-in-time solution}
The preceding analysis shows that the $L^p$ norm of the solution constructed in a finite time interval $[0,T]$ does not increase with time for any $p\in[1,\infty]$ by \eqref{energy6}. Therefore, we can continue the solution in a new time interval of the same length  with initial data $U_1(x,T)$, thus obtaining a solution in $[0,2T]$. We iterate this process to get a global in time solution.

We conclude the results obtained so far in the following theorem.

\begin{theorem}\label{ThmU1}
Let $s\in (0,1)$, $1<m<\infty$ and $N\geq1$. There exists a weak solution $U_1$ of Problem \eqref{ProblemEpsMuDeltaR} with initial data $\widehat{u}_0$. Moreover,   $U_1$ is a strong solution,   satisfies the $L^p$-energy estimate \eqref{energy6}, the second energy estimate \eqref{SecondEnergyU1}, and also
\begin{enumerate}
\item \textbf{(Decay of total mass)} For all $0<t<T$ we have
$\displaystyle{
\int_{B_R}U_1(x,t)dx \le\int_{B_R}u_0(x)dx.
}$
\item \textbf{($L^{\infty}$-estimate) } For all $0<t<T$ we have  $||U_1(\cdot,t)||_\infty\leq ||u_0||_\infty$.
\end{enumerate}
\end{theorem}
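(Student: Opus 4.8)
The proof has three parts: (i) construct $U_1$ as a fixed point of the Duhamel map $\mathcal{T}$ introduced above and upgrade it to a classical solution; (ii) since $U_1$ is then smooth, make rigorous the formal a-priori computations preceding this statement, which yields \eqref{energy6} and \eqref{SecondEnergyU1}; (iii) obtain the pointwise bounds (nonnegativity, the $L^\infty$ bound, the decay of mass) from the maximum principle and a boundary-flux identity.

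\textbf{Step 1 (existence and regularity).} One checks that $G(v)=(v+\mu)^{m-1}\nabla(-\Delta)^{-1}\mathcal{L}^{1-s}_\epsilon[v]$ sends bounded subsets of $C([0,T_0];L^1(B_R)\cap L^\infty(B_R))$ into bounded subsets of the same space and is Lipschitz there: $v\mapsto\mathcal{L}^{1-s}_\epsilon[v]$ is linear and bounded on $L^1(\RN)\cap L^\infty(\RN)$ by the boundedness lemma of Section 3, $(-\Delta)^{-1}$ and $\nabla(-\Delta)^{-1}$ are smoothing on the ball, and $z\mapsto(z+\mu)^{m-1}$ is locally Lipschitz on $[0,\infty)$ (suitably extended for $z<0$ so as to be globally Lipschitz, a modification invisible once $U_1\ge0$ is known). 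Combined with the $L^1$--$L^\infty$ smoothing of $e^{\delta t\Delta}$ and with $\|\nabla e^{\delta t\Delta}\|_{L^1\to L^1}\le C_\delta\,t^{-1/2}\in L^1_{\textup{loc}}(0,T)$, this makes $\mathcal{T}$ a contraction on a small ball for $T_0$ small, so Banach's principle gives a local mild solution; a standard continuation argument, based on the $L^1$ and $L^\infty$ bounds of Step 3 (which hold on any existence interval and are independent of $T_0$), extends it to $[0,T]$. Viewing \eqref{ProblemEpsMuDeltaR} as a uniformly parabolic equation (since $\delta>0$) whose remaining terms are of lower order and smooth --- indeed $\mathcal{L}^{1-s}_\epsilon[U_1]=\|J^{1-s}_\epsilon\|_{L^1}\,U_1-J^{1-s}_\epsilon*U_1$ is at least as regular as $U_1$, since $J^{1-s}_\epsilon\in C^\infty$ is non-singular --- a parabolic bootstrap gives $U_1\in C^{2,1}(\overline{B_R}\times(0,T))$, and testing against admissible test functions then shows that this classical solution is a weak solution in the required sense. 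This is the only genuinely technical point; since our approximation differs from that of \cite{BilerImbertKarch} only in that $\mathcal{L}^{1-s}_\epsilon$ replaces a genuine fractional Laplacian --- an operator in every respect better behaved --- the construction there transfers with only notational changes.

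\textbf{Step 2 (energy estimates).} Because $U_1$ is smooth, bounded and vanishes on $\partial B_R$, multiplying the equation by $p\,U_1^{p-1}$ with $p>1$ (so that $U_1^{p-1}$ also vanishes on $\partial B_R$ and every boundary term drops) and integrating over $B_R$ is justified; the chain of identities established just above, closed by the generalized Stroock--Varopoulos inequality \eqref{GenStroockVarAprox} applied to the pair $\psi,\Psi$ with $\psi'=(\Psi')^2$ and $\nabla\psi(U_1)=U_1^{p-2}(U_1+\mu)^{m-1}\nabla U_1$, yields \eqref{energy6} (and, discarding the nonnegative viscosity term, the ensuing energy inequality). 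Likewise, multiplying by $(-\Delta)^{-1}\mathcal{L}^{1-s}_\epsilon[U_1]$ --- which solves a Poisson problem on $B_R$ with smooth right-hand side and zero Dirichlet data, hence lies in $C^{2,\alpha}(\overline{B_R})$ --- integrating by parts, and using the square-root operators of Section 3 together with the identity \eqref{operatoronehalf}, reproduces the computation leading to \eqref{SecondEnergyU1}.

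\textbf{Step 3 (pointwise bounds).} Write the equation as $\partial_t U_1=\delta\Delta U_1+(m-1)(U_1+\mu)^{m-2}\nabla U_1\cdot\nabla(-\Delta)^{-1}\mathcal{L}^{1-s}_\epsilon[U_1]-(U_1+\mu)^{m-1}\mathcal{L}^{1-s}_\epsilon[U_1]$. At a first interior extremum of $U_1-M$ (for the upper bound, $M:=\|\widehat u_0\|_\infty$) or of $U_1$ (for nonnegativity) --- after the shift $v\mapsto v\mp\kappa t$, $\kappa>0$ small, which makes the temporal inequality strict --- the gradient term vanishes, the Laplacian term has the favourable sign, and $\mathcal{L}^{1-s}_\epsilon[U_1](x_0)=\int_{\RN}\big(U_1(x_0)-U_1(y)\big)J^{1-s}_\epsilon(x_0-y)\,dy$ is $\ge0$ at such a maximum (using $U_1(x_0)\ge M>0\ge U_1(y)$ for $y\notin B_R$, since $U_1=0$ off $B_R$) and $\le0$ at such a negative minimum; in both cases the right-hand side has the wrong sign, a contradiction. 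This gives $0\le U_1(\cdot,t)\le\|\widehat u_0\|_\infty\le\|u_0\|_\infty$, the bounds invoked in Step 1. Finally, integrating the equation over $B_R$: since $U_1\ge0$ in $B_R$ and vanishes on $\partial B_R$, the viscous boundary flux satisfies $\delta\int_{\partial B_R}\nabla U_1\cdot\nu\,d\sigma\le0$; and since $U_1=0$ on $\partial B_R$, the convective flux equals $\mu^{m-1}\int_{\partial B_R}\nabla\big((-\Delta)^{-1}\mathcal{L}^{1-s}_\epsilon[U_1]\big)\cdot\nu\,d\sigma=-\mu^{m-1}\int_{B_R}\mathcal{L}^{1-s}_\epsilon[U_1]\,dx\le0$, because --- with $U_1$ extended by $0$ --- antisymmetry cancels the $B_R\times B_R$ part of $\int_{B_R}\mathcal{L}^{1-s}_\epsilon[U_1]\,dx$, leaving the nonnegative quantity $\int_{B_R}U_1(x)\big(\int_{\RN\setminus B_R}J^{1-s}_\epsilon(x-y)\,dy\big)\,dx$. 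Hence $\frac{d}{dt}\int_{B_R}U_1\,dx\le0$, which is item 1.
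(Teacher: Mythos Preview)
Your proof is correct and follows essentially the same architecture as the paper: existence via the Duhamel map $\mathcal{T}$ and Banach's fixed-point theorem (with the technical details deferred to \cite{BilerImbertKarch}, as the paper does), followed by the formal energy computations of Section~4.1 made rigorous through the classical regularity of $U_1$.

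The one substantive addition is your Step~3. The paper does not prove items 1 and 2 here at all; it simply records them and, in the remark after Theorem~\ref{Thm1PMFP}, points to \cite{StanTesoVazJDE} for the arguments. Your explicit treatment---a maximum-principle argument exploiting the sign of $\mathcal{L}^{1-s}_\epsilon[U_1]$ at an extremum for the $L^\infty$ bound and nonnegativity, and a boundary-flux computation (using the antisymmetry cancellation of the $B_R\times B_R$ part of $\int_{B_R}\mathcal{L}^{1-s}_\epsilon[U_1]\,dx$) for the mass decay---is a self-contained alternative. This buys independence from the earlier paper and makes transparent why the nonsingular kernel $J^{1-s}_\epsilon$ behaves well under the comparison argument; the cost is only a few lines. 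One small point worth stating explicitly in your Step~3: for the nonnegativity argument to close, the Lipschitz extension of $z\mapsto(z+\mu)^{m-1}$ to $z<0$ must be chosen \emph{nonnegative}, so that $-(U_1+\mu)^{m-1}_{\mathrm{ext}}\,\mathcal{L}^{1-s}_\epsilon[U_1]\ge 0$ at a negative minimum; you allude to this but do not say it.
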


\begin{remark} {\rm
In Sections \ref{Subsec:Eps}, \ref{subsec:R}, \ref{Subsec:mu} and \ref{Subsec:delta} we will only consider $s\in (0,\frac{1}{2})$ when $N=1$ since the operator $(-\Delta)^{-s}$ is not well defined out of this range. We will devote Section \ref{SectN1} to comment   on   how  to deal with the  case $N=1$, $s\in [\frac{1}{2},1)$.}
\end{remark}

\subsection{Limit as \texorpdfstring{$\epsilon \to 0$}{}}\label{Subsec:Eps}
Let $U_1$ be a weak solution of problem \eqref{ProblemEpsMuDeltaR} with parameters $\delta,\mu, R>0$ fixed from the beginning. We will prove that $\lim_{\epsilon \to 0}U_1=U_2$, where $U_2$ is a weak solution of the problem
\[
\left\{
\begin{array}{ll}
(U_2)_t= \delta \Delta U_2 +\nabla \cdot((U_2+\mu)^{m-1}\nabla (-\Delta)^{-s} U_2)&\text{for } (x,t)\in B_R\times (0,T),\\
U_2(x,0)=\widehat{u}_0(x) &\text{for } x \in B_R,\\
U_2(x,t)=0 &\text{for } x\in \partial B_R, \ t\geq 0.
\end{array}
\right.
\tag{$P_{\delta\mu R}$}\label{ProblemMuDeltaR} \]
Moreover, we will also prove that $U_2$  inherits most of the properties of $U_1$. In particular, we will prove that $U_2$ can be extended by $0$ to $\RN \setminus B_R$, this allowing the definition of $(-\Delta)^{-s} U_2$.

\subsubsection{Existence of a limit. Compactness estimate I}\label{Subsec:CompactEps}
\noindent $\textbf{I.}$ Using the energy estimate \eqref{energy6} with $p=2$ we obtain that $U_1 \in L^2(0,T:H_0^1(B_R))$.

%
%
%
%
%

\noindent $\textbf{II.}$ Estimates on the derivative $(U_1)_t$. We use the equation $$(U_1)_t=\delta \Delta U_1 +\nabla \cdot((U_1+\mu)^{m-1} \nabla (-\Delta)^{-1} \mathcal{L}^{1-s}_\epsilon [U_1]).$$ The $H_0^1$ estimate of \eqref{energy6} ensures that $\delta \Delta U_1  \in L^2(0,T:H^{-1}(B_R))$. The second energy estimate \eqref{SecondEnergyU1} implies that
$$
(U_1+\mu)^{\frac{m-1}{2}} \nabla (-\Delta)^{-1} \mathcal{L}^{1-s}_\epsilon [U_1] \in L^2(0,T:L^2(B_R)).
$$ Since also  $U_1 \in L^\infty((0,T)\times B_R)$  then this  implies that $\nabla \cdot((U_1+\mu)^{m-1} \nabla (-\Delta)^{-1} \mathcal{L}^{1-s}_\epsilon [U_1]) \in L^2(0,T:H^{-1} (B_R))$. We conclude that
$$(U_1)_t \in L^2(0,T:H^{-1} (B_R)).$$

\noindent $\textbf{III.}$ We apply the compactness criteria of Simon (see Lemma \ref{ConvSimon1} in Section \ref{sec:app}) in the context of
\[
H^{1}_{0}(B_R)\subset L^2(B_R) \subset H^{-1}(B_R),
\]
where the left hand side inclusion is compact. We conclude that the family of approximate solutions $\{U_1\}_{\epsilon>0}$ is relatively compact in $L^2(0,T:L^2(B_R))$. Therefore, there exists a limit $(U_1)_{\epsilon,\delta, \mu, R} \to (U_2)_{\delta, \mu, R}$ as $\epsilon \to 0$ in $L^2(0,T:L^2(B_R)),$ up to subsequences. Note that, since $(U_1)_\epsilon$ is a family of positive functions defined on $B_R$ and extended to $0$ in $\RN\setminus B_R$, then the limit $U_2=0$ a.e. on $\RN\setminus B_R$.  We obtain that
\begin{equation}\label{convU1U2}
U_1\stackrel{\epsilon \to 0}{\longrightarrow}U_2   \quad \text{in }L^2(0,T:L^2(B_R))=L^2(B_R\times (0,T)).
\end{equation}

\subsubsection{The limit \texorpdfstring{$U_2$}{} is a solution of the new problem \texorpdfstring{\eqref{ProblemMuDeltaR}}{}}

We pass to the limit as $\epsilon \to 0$ in the definition \eqref{weaksolAprox} of a weak solution of Problem \eqref{ProblemEpsMuDeltaR} and we prove that the limit $U_2$ found in \eqref{convU1U2} is a weak solution of Problem \eqref{ProblemMuDeltaR}. The convergence of the first integral in \eqref{weaksolAprox} is justified by \eqref{convU1U2} since
\begin{equation}\label{weaklimiteps}
\left|\int_0^T\int_{B_R} (U_1-U_2)(\phi_t+\delta \Delta \phi)dxdt\right|\leq ||U_1-U_2||_{L^2(B_R\times (0,T))}||\phi_t+\delta \Delta \phi||_{L^2(B_R\times (0,T))}.
\end{equation}
To prove convergence of the second integral in \eqref{weaksolAprox} we argue as follows. Using \eqref{convU1U2} and the $L^{\infty}$-decay estimate from Theorem \ref{ThmU1} we get that
\begin{equation}\label{L2convPositPower}
 (U_1 + \mu)^{m-1} \to (U_2+\mu)^{m-1}  \quad \text{in }L^2(B_R\times (0,T)).
\end{equation}
The convergence of the nonlocal gradient term in \eqref{weaksolAprox} is proved in the following lemma.

\begin{lemma}\label{LemmaWeakConv}
We have that
 $$\nabla (-\Delta)^{-1} \mathcal{L}^{1-s}_\epsilon [U_1]\stackrel{\epsilon \to 0}{\rightharpoonup}\nabla (-\Delta)^{-s} U_2 \quad \mbox{ in } \quad L^2(B_R\times(0,T)) .$$
\end{lemma}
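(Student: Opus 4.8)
The plan is to establish the convergence in two stages: first pass from $\mathcal{L}^{1-s}_\epsilon[U_1]$ to $\mathcal{L}^{1-s}_\epsilon[U_2]$ using the strong $L^2$-convergence $U_1\to U_2$, and then pass from $\mathcal{L}^{1-s}_\epsilon[U_2]$ to $(-\Delta)^{1-s}U_2$ using Lemma~\ref{LemmaAproxInverseFracLap}~(b). For the first stage, I would work in Fourier variables: since $\nabla(-\Delta)^{-1}\mathcal{L}^{1-s}_\epsilon$ has symbol $i\xi|\xi|^{-2}\mathcal{S}_{\mathcal{L}^{1-s}_\epsilon}(\xi)$ with $0\le \mathcal{S}_{\mathcal{L}^{1-s}_\epsilon}(\xi)\le \mathcal{S}_{(-\Delta)^{1-s}}(\xi)\sim|\xi|^{2(1-s)}$, the multiplier $i\xi|\xi|^{-2}\mathcal{S}_{\mathcal{L}^{1-s}_\epsilon}(\xi)$ is bounded by $C|\xi|^{1-2s}$, and for $s\in(0,1/2)$ (or $s\in(0,1)$ for $N\ge 2$) this is singular only at the origin. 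One needs to be slightly careful since the operator is not $L^2\to L^2$ bounded; instead I would test against a fixed $\phi\in C_c^\infty(B_R\times(0,T))$ (or $\phi\in L^2(0,T;C_c^\infty(B_R))$ by density) and move the operator onto $\phi$: writing $\int\!\!\int \nabla(-\Delta)^{-1}\mathcal{L}^{1-s}_\epsilon[U_1-U_2]\cdot\phi\,dxdt = -\int\!\!\int (U_1-U_2)\,\mathcal{L}^{1-s}_\epsilon[(-\Delta)^{-1}\nabla\cdot\phi]\,dxdt$, and then bounding $\|\mathcal{L}^{1-s}_\epsilon[(-\Delta)^{-1}\nabla\cdot\phi]\|_{L^2}$ uniformly in $\epsilon$ via the symbol estimate above (the symbol $\mathcal{S}_{\mathcal{L}^{1-s}_\epsilon}(\xi)|\xi|^{-2}|\xi| = \mathcal{S}_{\mathcal{L}^{1-s}_\epsilon}(\xi)|\xi|^{-1}\le C|\xi|^{1-2s}$ acting on the Schwartz-class $\widehat{\nabla\cdot\phi}$ is uniformly in $L^2$). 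This gives $|\int\!\!\int\nabla(-\Delta)^{-1}\mathcal{L}^{1-s}_\epsilon[U_1-U_2]\cdot\phi|\le C\|U_1-U_2\|_{L^2(B_R\times(0,T))}\to 0$.

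For the second stage, fix again $\phi\in L^2(0,T;C_c^\infty(B_R))$. For a.e.\ $t$, Lemma~\ref{LemmaAproxInverseFracLap}~(b) applied to $f=U_2(\cdot,t)$ — which lies in $L^1(\RN)$ and satisfies $\nabla^{1-2s}U_2(\cdot,t)\in L^2(\RN)$ thanks to the second energy estimate \eqref{SecondEnergyU1} inherited in the limit (which controls $\nabla(-\Delta)^{-1}\mathcal{L}^{1-s}_\epsilon[U_1]$ in $L^2$ uniformly, hence its weak limit $\nabla(-\Delta)^{-s}U_2=\nabla^{1-2s}U_2$) — yields $\int_{\RN}(\nabla(-\Delta)^{-1}\mathcal{L}^{1-s}_\epsilon[U_2]-\nabla^{1-2s}U_2)\phi(\cdot,t)\,dx\to 0$ as $\epsilon\to 0$ for a.e.\ $t$. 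To integrate this convergence in $t$ I would invoke dominated convergence, using that $\|\nabla(-\Delta)^{-1}\mathcal{L}^{1-s}_\epsilon[U_2(\cdot,t)]\|_{L^2(\RN)}$ is bounded uniformly in $\epsilon$ and $t$ (again by the symbol bound $C|\xi|^{1-2s}$ and $U_2(\cdot,t)\in L^1\cap L^\infty$, or more simply by the energy estimate applied to the $U_2$-problem \eqref{ProblemMuDeltaR}, which is available because \eqref{SecondEnergyU1} is stable under the $\epsilon\to0$ limit), so that the $t$-integrand is dominated by $C\|\phi(\cdot,t)\|_{L^2(\RN)}\in L^1(0,T)$.

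The main obstacle I anticipate is the careful handling of the operator $\nabla(-\Delta)^{-1}\mathcal{L}^{1-s}_\epsilon$ on a bounded domain: the functions $U_1,U_2$ are only defined on $B_R$ and extended by zero, while the nonlocal inverse operator is global, so I must be consistent about which extension is used (the excerpt's convention: extend by $0$ and integrate over $\RN$) and make sure the test function $\phi$, though compactly supported in $B_R$, interacts with the globally-supported function $(-\Delta)^{-1}\nabla\cdot\phi$ correctly — in particular $\mathcal{L}^{1-s}_\epsilon$ applied to $(-\Delta)^{-1}\nabla\cdot\phi$ need not be supported in $B_R$, but that is harmless because we pair it with $U_1-U_2$ which is. A secondary technical point is justifying that the $L^2$-bound on $\nabla(-\Delta)^{-1}\mathcal{L}^{1-s}_\epsilon[U_1]$ from \eqref{SecondEnergyU1} (which only directly bounds $(U_1+\mu)^{(m-1)/2}\nabla(-\Delta)^{-1}\mathcal{L}^{1-s}_\epsilon[U_1]$) can be upgraded; here one uses that $U_1+\mu\ge\mu>0$ on $B_R$, so $\nabla(-\Delta)^{-1}\mathcal{L}^{1-s}_\epsilon[U_1]\in L^2(B_R\times(0,T))$ with a bound depending on $\mu$ — which is fine at this stage since $\mu$ is fixed. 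Having a uniform-in-$\epsilon$ $L^2$ bound then also identifies the weak limit as $\nabla(-\Delta)^{-s}U_2$ by combining the two stages above.
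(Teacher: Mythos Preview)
Your approach is correct in spirit and uses the same three ingredients as the paper: the uniform $L^2$ bound on $\nabla(-\Delta)^{-1}\mathcal{L}^{1-s}_\epsilon[U_1]$ coming from the second energy estimate together with $U_1+\mu\ge\mu>0$ (this is exactly the paper's Step~I), a Fourier/symbol comparison $\mathcal{S}_{\mathcal{L}^{1-s}_\epsilon}\le \mathcal{S}_{(-\Delta)^{1-s}}$, and Lemma~\ref{LemmaAproxInverseFracLap}. The difference is in the decomposition of the distributional limit. The paper writes
\[
\nabla(-\Delta)^{-1}\mathcal{L}^{1-s}_\epsilon[U_1]-\nabla(-\Delta)^{-s}U_2
=\underbrace{\Big(\nabla(-\Delta)^{-1}\mathcal{L}^{1-s}_\epsilon[U_1]-\nabla(-\Delta)^{-s}U_1\Big)}_{I_{1,\epsilon}}
+\underbrace{\Big(\nabla(-\Delta)^{-s}U_1-\nabla(-\Delta)^{-s}U_2\Big)}_{I_{2,\epsilon}},
\]
i.e.\ it first removes $\epsilon$ on the function $U_1$ via Lemma~\ref{LemmaAproxInverseFracLap}, and only then swaps $U_1\to U_2$ using the strong $L^2$ convergence (moving the fixed operator $(-\Delta)^{-s}$ onto the test function). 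You reverse the order: first swap $U_1\to U_2$ keeping $\epsilon$ (your Stage~1, moving $\mathcal{L}^{1-s}_\epsilon(-\Delta)^{-1}\nabla$ onto the test function with a uniform-in-$\epsilon$ symbol bound), and then remove $\epsilon$ on the fixed function $U_2$ via Lemma~\ref{LemmaAproxInverseFracLap} (your Stage~2). Both routes work; the paper's order is slightly cleaner because Lemma~\ref{LemmaAproxInverseFracLap} is invoked on $U_1$, a smooth solution of the approximating problem, so all hypotheses are immediate.

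There is, however, a small circularity in your Stage~2 justification: you verify $\nabla^{1-2s}U_2(\cdot,t)\in L^2$ by saying that it is the weak $L^2$ limit of $\nabla(-\Delta)^{-1}\mathcal{L}^{1-s}_\epsilon[U_1]$, but that identification is exactly what the lemma is proving. Two easy fixes: either observe that the \emph{proof} of Lemma~\ref{LemmaAproxInverseFracLap} only uses $f\in L^1(\RN)$ (so that $\widehat f\in L^\infty$) for the Fourier domination, which $U_2(\cdot,t)$ satisfies; or note independently that $U_2\in L^2(0,T;H^1_0(B_R))$ by the uniform $H^1$ bound in the $L^p$ energy estimate \eqref{energy6} (the $\delta$-term with $p=2$) and weak lower semicontinuity, whence $\nabla^{1-2s}U_2(\cdot,t)\in L^2(\RN)$ for a.e.\ $t$. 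With either patch your dominated-convergence argument in $t$ also goes through, using the Fourier domination $|F_\epsilon(\xi)|\le C|\xi|^{1-2s}|\widehat{U_2(t)}|\,|\widehat{\phi(t)}|$ from the proof of Lemma~\ref{LemmaAproxInverseFracLap} rather than an $L^2$ norm bound on $\nabla(-\Delta)^{-1}\mathcal{L}^{1-s}_\epsilon[U_2(\cdot,t)]$.
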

\begin{proof}
\noindent \textbf{I. There exists a weak limit.} From the second energy estimate \eqref{SecondEnergyU1} we note that
\begin{equation*}
\begin{split}
\left\|\nabla (-\Delta)^{-1} \mathcal{L}^{1-s}_\epsilon [U_1]\right\|_{L^2(B_R\times(0,T))}&=\left\|\frac{ (U_1+\mu)^{\frac{m-1}{2}}}{ (U_1+\mu)^{\frac{m-1}{2}}}\nabla (-\Delta)^{-1} \mathcal{L}^{1-s}_\epsilon [U_1]\right\|_{L^2(B_R\times(0,T))}\\
&\leq  \mu^{-\frac{m-1}{2}} \left\|(U_1+\mu)^{\frac{m-1}{2}}\nabla (-\Delta)^{-1} \mathcal{L}^{1-s}_\epsilon [U_1]\right\|_{L^2(B_R\times(0,T))}\leq C.
\end{split}
\end{equation*}
Then, Banach-Alaoglu Theorem ensures that there exists a subsequence such that
\[
\nabla (-\Delta)^{-1} \mathcal{L}^{1-s}_\epsilon [U_1] \, \stackrel{\epsilon \to 0}{\rightharpoonup}\, v \mbox{ in } L^2(B_R\times(0,T)) .
\]

\noindent  \textbf{II. Identifying the limit in the sense of distributions.} Now, we will prove that $$\nabla (-\Delta)^{-1} \mathcal{L}^{1-s}_\epsilon [U_1]\stackrel{\epsilon \to 0}{\longrightarrow}\nabla (-\Delta)^{-s} U_2$$ in distributions. More exactly, we will prove that
$$
\int_0^T\int_{B_R}  (-\Delta)^{-1} \mathcal{L}^{1-s}_\epsilon [U_1]\nabla \phi dxdt  \stackrel{\epsilon \to 0}{\longrightarrow} \int_0^T\int_{B_R}  (-\Delta)^{-s} U_2\nabla \phi dxdt
$$
for all $\phi\in C_c^\infty(B_R\times (0,T))$. We estimate the difference of the two integrals above as follows,
\begin{equation*}
\begin{split}
 I_\epsilon &= \int_0^T\int_{B_R} \left( (-\Delta)^{-1} \mathcal{L}^{1-s}_\epsilon [U_1] - (-\Delta)^{-s}U_1\right) \nabla \phi dxdt + \int_0^T\int_{B_R} \left( (-\Delta)^{-s} U_1 - (-\Delta)^{-s} U_2 \right) \nabla \phi dxdt \\
 &=I_{1,\epsilon} + I_{2,\epsilon}.
\end{split}
\end{equation*}
The first integral converges to $0$ as a consequence of the approximation of $(-\Delta)^{-s}$ in the sense derived in Lemma \ref{LemmaAproxInverseFracLap} a). Note that $U_1$ is changing with $\epsilon$, but we have the uniform bound $\|U_1\|_{1}\leq \|u_0\|_{1}$ which ensures that Lemma \ref{LemmaAproxInverseFracLap} can still being applied.
For the second integral we write
\begin{equation*}
\begin{split}
 I_{2,\epsilon} &= \int_0^T\int_{\RN} \left(  U_1 -  U_2 \right) \nabla (-\Delta)^{-s} \phi \, dxdt  \\
 &=\int_0^T\int_{B_{\rho}} \left(  U_1 -  U_2 \right) \nabla (-\Delta)^{-s} \phi \, dxdt + \int_0^T\int_{\RN \setminus B_{\rho}} \left(  U_1 -  U_2 \right) \nabla (-\Delta)^{-s} \phi \, dxdt \\
\end{split}
\end{equation*} for a $\rho$ to be chosen later. Now fix $\eta>0$. Then
\begin{equation}\label{bleble}
\begin{split}
 \int_0^T\int_{\RN \setminus B_{\rho}}  &  \left|  U_1 -  U_2 \right| | \nabla (-\Delta)^{-s} \phi | \, dxdt \le \|U_1 -U_2\|_{L^2(\RN \times (0,T))} \| \nabla (-\Delta)^{-s} \phi \|_{L^2 ( (\RN \setminus B_{\rho})\times (0,T))} \\
 &\le 2 T \|u_0\|_{L^2(\RN)}\| \nabla (-\Delta)^{-s} \phi \|_{L^2 (  (\RN \setminus B_{\rho}) \times  (0,T)) }.
\end{split}
\end{equation}
Since $\nabla (-\Delta)^{-s} \phi  \in L^2 (\RN \times  (0,T)) $ then we can choose  $\rho$ large enough such that\\ $\| \nabla (-\Delta)^{-s} \phi \|_{L^2 ( (\RN \setminus B_{\rho})\times  (0,T))} \le \eta/2$. On the other hand
\begin{equation*}
\begin{split}\int_0^T\int_{B_{\rho}} &\left(  U_1 -  U_2 \right) \nabla (-\Delta)^{-s} \phi \, dxdt \le \|U_1 -U_2\|_{L^2( B_{\rho} \times  (0,T) )} \| \nabla (-\Delta)^{-s} \phi \|_{L^2  ( B_{\rho}\times  (0,T) )}.
\end{split}
\end{equation*}
We choose $\epsilon$ small  enough such that $\|U_1 -U_2\|_{L^2( B_{\rho} \times  (0,T))} \le \eta/2$. Therefore $ I_{2,\epsilon}\to 0 $ as $ \epsilon \to 0. $

Note that we could have fixed $\rho=R$ and then the first integral in \eqref{bleble} is identically zero since $U_1$ and $U_2$ are supported in $B_R$. We keep the splitting here since it will be needed to estimate $I_{2,\epsilon}$ in the limit as $R\to \infty$ (see Section \ref{sec:blibli}).
\end{proof}

To conclude this part, we use the following: given two sequences $f_\epsilon \rightharpoonup f$ in $L^2$ and $g_\epsilon \to g$ strongly in $L^2$, then the scalar product converges $\displaystyle \int f_\epsilon g_\epsilon \,dx \to \int fg \,dx $. Then \eqref{L2convPositPower} together with Lemma \ref{LemmaWeakConv} implies that
$$
\int_0^T\int_{B_R}  (U_1+\mu)^{m-1} \nabla (-\Delta)^{-1} \mathcal{L}^{1-s}_\epsilon [U_1]\nabla \phi dxdt \stackrel{\epsilon \to 0}{\longrightarrow} \int_0^T\int_{B_R}  (U_2+\mu)^{m-1} \nabla (-\Delta)^{-s} U_2\nabla \phi dxdt.
$$

\subsubsection{Passing to the limit  in the \texorpdfstring{$L^p$}{} energy estimate \texorpdfstring{\eqref{energy6}}{}}\label{sec:bloblo}
We have that
\begin{align*}\int_{B_R}   \psi(U_1) \mathcal{L}^{1-s}_\epsilon [U_1]dx &= \int_{B_R}  \int_{B_R}  \psi (U_1(x)) \frac{U_1(x)-U_1(y)}{(|x-y|^2+\epsilon^2)^{\frac{N+2(1-s)}{2}}  } dx dy \\
&=\frac{1}{2}\int_{B_R}  \int_{B_R}  \left(\psi (U_1(x)) - \psi(U_1(y))\right) \frac{U_1(x)-U_1(y)}{(|x-y|^2+\epsilon^2)^{\frac{N+2(1-s)}{2}}  } dx dy.
\end{align*}
Let
$$
G_{\epsilon} (x,y):= \frac{1}{2} \left(\psi (U_1(x)) - \psi(U_1(y))\right) \frac{U_1(x)-U_1(y)}{(|x-y|^2+\epsilon^2)^{\frac{N+2(1-s)}{2}}  },
$$
and
$$
G (x,y):= \frac{1}{2} \left(\psi (U_2(x)) - \psi(U_2(y))\right) \frac{U_2(x)-U_2(y)}{|x-y|^{N+2(1-s)}  }.
$$
Note that  $G_{\epsilon}(x,y)\ge 0$ since $\psi$ is a non-decreasing function.
Also, $\int_{\RN}\int_{\RN}  G_{\epsilon}(x,y)\le C $ uniformly in $\epsilon>0$.
Since $U_1 \to U_2$  as $\epsilon \to 0$ pointwise a.e. in $x\in B_R$ then $G_{\epsilon}(x,y) \to G (x,y)$ a.e. $x,y\in \RN$.   We can pass to the limit $\epsilon \to 0$ in the last term of the energy estimate \eqref{energy6} according to the Fatou's Lemma
\[
\lim_{\epsilon \to 0}\int_0^t\int_{B_R}\int_{B_R}  G_{\epsilon}(x,y)dxdydt \ge \int_0^t\int_{B_R}\int_{B_R}  G(x,y) dx dy =  \int_0^t \int_{B_R}  \psi(U_2) (-\Delta)^{1-s} U_2dxdt.
\]

Now we pass to the limit in the $H^1$ term. The $L^p$ energy estimate \eqref{energy6} shows that $U_1^{p/2}$ is uniformly bounded in $L^2(0,T:H^1_0(B_R))$, therefore there exists a weak limit $w$ in $L^2(0,T:H^1_0(B_R))$. Since $H^1_0(B_R) \subset L^2(B_R)$ with continuous inclusion, then $U_1^{p/2} \to w$  in $L^2(B_R\times(0,T))$. By \eqref{convU1U2} we know that $U_1 \to U_2$ in $L^2(B_R\times(0,T))$. For $p>2$ we deduce that $U_1^{p/2} \to U_2^{p/2}$  in $L^2(B_R\times (0,T))$ and then we identify the limit  $w=U_2^{p/2}$. The weak lower semi-continuity of the $\| \cdot \|_{H_0^1(B_R)}$  norm implies that
\begin{equation*}
 \liminf_{\epsilon \to 0 }\int_0^t \int_{B_R} \left|\nabla (U_1^{p/2})\right|^2 dxdt \ge  \int_0^t \int_{B_R} \left|\nabla (U_2^{p/2})\right|^2 dxdt.
 \end{equation*}
 We used the fact that the norm of a Hilbert space is weakly semi-continuous. A similar idea will be employed to pass to the limit also in the integrals in the second energy estimate \eqref{SecondEnergyU1}.

\subsubsection{Passing to the limit  in the second energy estimate \texorpdfstring{\eqref{SecondEnergyU1}}{}}

The first two  terms involve integral operators, so the continuous inclusion $ L^2(B_R)\subset H^{-s/2}(B_R)$ together with  \eqref{convU1U2} allow to pass to the limit. For the third one we use the argument given in Section \ref{sec:bloblo} in the particular case $\psi(U_1)=U_1$. For the last term we have to prove the following inequality
\begin{equation*}
 \liminf_{\epsilon \to 0}\int_0^t \int_{B_R}  (U_1+\mu)^{m-1} \left| \nabla  (-\Delta)^{-1} \mathcal{L}_{\epsilon}^{1-s}[ U_1]\right|^2 dx  dt    \ge \int_0^t \int_{B_R}  (U_2+\mu)^{m-1} \left| \nabla  (-\Delta)^{-s} U_2\right|^2 dx  dt .
\end{equation*}
This is a consequence of the fact that the $L^2$ norm is weakly lower semi-continuous and $(U_1+\mu)^{\frac{m-1}{2}} \nabla  (-\Delta)^{-1} \mathcal{L}_{\epsilon}^{1-s}[ U_1] \rightharpoonup (U_2+\mu)^{\frac{m-1}{2}} \nabla (-\Delta)^{-s} U_2  $  in $L^2 (B_R \times (0,t))$.
Indeed, we have that
$$\int_0^t\int_{B_R}  (U_1+\mu)^{\frac{m-1}{2}} \nabla (-\Delta)^{-1} \mathcal{L}^{1-s}_\epsilon [U_1] \phi dxdt \stackrel{\epsilon \to 0}{\longrightarrow} \int_0^t\int_{B_R}  (U_2+\mu)^{\frac{m-1}{2}} \nabla (-\Delta)^{-s} U_2  \phi dxdt$$
for every $\phi \in L^2 (B_R \times (0,t))$. This is because $(U_1+\mu)^{\frac{m-1}{2}}  \phi \rightarrow  (U_2+\mu)^{\frac{m-1}{2}}  \phi$ in $L^2(B_R \times (0,t))$ (using the Dominated Convergence Theorem) and $\nabla (-\Delta)^{-1} \mathcal{L}^{1-s}_\epsilon [U_1] \rightharpoonup \nabla (-\Delta)^{-s} U_2$ in $L^2(B_R\times(0,t))$ by Lemma \ref{LemmaWeakConv}.

From now on, we do not need to consider a smooth initial data $\widehat{u}_0 \sim u_0$. We sum up the results of this section in the following theorem.

\begin{theorem}
Let $s\in(0,1)$, $1<m<\infty$, $N\geq1$. There exists a weak solution $U_2$ of Problem \eqref{ProblemMuDeltaR} with initial data $u_0\in L^1(\R^N)\cap L^\infty(\R^N)$. Moreover $U_2$ has the following properties:
\begin{enumerate}
\item \textbf{(Decay of total mass)} For all $0<t<T$ we have
$\displaystyle{
\int_{B_R}U_2(x,t)dx \le\int_{B_R}u_0(x)dx.
}$

\item \textbf{($L^{\infty}$ estimate) } For all $0<t<T$ we have  $||U_2(\cdot,t)||_\infty\leq ||u_0||_\infty$.

\item \textbf{($L^p$ energy estimate)} For all $1<p<\infty$ and $0<t<T$ we have
\begin{equation}\label{energyU2}
\begin{split}
 \int_{B_R} U_2^p(x,t)dx  & +\frac{4(p-1)\delta}{p}\int_0^t \int_{B_R} \left|\nabla (U_2^{p/2})\right|^2 dxdt \\\
 &+ p (p-1) \int_0^t \int_{B_R}  \psi(U_2) (-\Delta)^{1-s} U_2dxdt \le \int_{B_R} u_0^p(x)dx.\end{split}
\end{equation}

\item \textbf{(Second energy estimate)} For all $0<t<T$  we have
\begin{equation}\label{SecondEnergyU2}
\begin{split}
 \frac{1}{2} \int_{B_R}& \left|(-\Delta)^{-\frac{s}{2}} U_2(t)\right|^2  dx   +\delta \int_0^t\int_{B_R}\left|(-\Delta)^{\frac{1-s}{2}}[U_2]\right|^2  dx dt  \\
&
+\int_0^t \int_{B_R}  (U_2+\mu)^{m-1} \left| \nabla  (-\Delta)^{-s} U_2(t)\right|^2 dx  dt    \le \frac{1}{2} \int_{B_R} \left|(-\Delta)^{-\frac{s}{2}} u_0\right|^2 dx. \end{split}
\end{equation}
\end{enumerate}
\end{theorem}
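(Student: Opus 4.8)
The plan is to obtain $U_2$ as a limit, along a subsequence, of the family $\{U_1\}_{\epsilon>0}$ of weak solutions of Problem \eqref{ProblemEpsMuDeltaR} provided by Theorem \ref{ThmU1}, keeping $\delta,\mu,R>0$ fixed, and then to transfer each a priori bound to the limit. First I would collect the $\epsilon$-uniform estimates already available: the case $p=2$ of \eqref{energy6} bounds $U_1$ in $L^2(0,T:H^1_0(B_R))$; the second energy estimate \eqref{SecondEnergyU1} bounds $(\mathcal{L}^{1-s}_\epsilon)^{1/2}[U_1]$ in $L^2(B_R\times(0,T))$, hence by the inclusions \eqref{SpaceHInclusions} bounds $U_1$ in $L^2(0,T:H^{1-s}_{\epsilon_0}(B_R))$ for any fixed $\epsilon_0\in(0,1]$ and $0<\epsilon<\epsilon_0$; and the equation together with the $L^\infty$ bound of Theorem \ref{ThmU1} bounds $(U_1)_t$ in $L^2(0,T:H^{-1}(B_R))$. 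With the compact inclusion $H^{1-s}_{\epsilon_0}(B_R)\subset L^2(B_R)\subset H^{-1}(B_R)$, the Aubin--Lions--Simon criterion (Lemma \ref{ConvSimon1}) yields $U_1\to U_2$ strongly in $L^2(B_R\times(0,T))$ and a.e., with $U_2\ge0$ and $U_2=0$ a.e.\ outside $B_R$, i.e.\ \eqref{convU1U2}.

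Next I would pass to the limit in the weak formulation \eqref{weaksolAprox}. The term with $\phi_t-\delta\Delta\phi$ converges by strong $L^2$ convergence. For the flux term I would use that $(U_1+\mu)^{m-1}\to(U_2+\mu)^{m-1}$ strongly in $L^2(B_R\times(0,T))$ (uniform $L^\infty$ bound plus dominated convergence), while $\nabla(-\Delta)^{-1}\mathcal{L}^{1-s}_\epsilon[U_1]\rightharpoonup\nabla(-\Delta)^{-s}U_2$ weakly in $L^2(B_R\times(0,T))$ — this is Lemma \ref{LemmaWeakConv}, whose proof combines an $\epsilon$-uniform bound extracted from \eqref{SecondEnergyU1} by multiplying and dividing by $(U_1+\mu)^{(m-1)/2}$ (giving a weak limit $v$ via Banach--Alaoglu) with the identification $v=\nabla(-\Delta)^{-s}U_2$ obtained by testing against $C_c^\infty$ and splitting the difference into the operator-approximation error (controlled by Lemma \ref{LemmaAproxInverseFracLap}, applicable thanks to the uniform mass bound $\|U_1\|_1\le\|u_0\|_1$) and the term $\int(U_1-U_2)\nabla(-\Delta)^{-s}\phi$ (controlled by strong $L^2$ convergence on a large ball $B_\rho$ together with the tail smallness of $\nabla(-\Delta)^{-s}\phi\in L^2(\RN\times(0,T))$). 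A weak--strong product argument then lets the flux term pass to the limit, so $U_2$ is a weak solution of Problem \eqref{ProblemMuDeltaR}.

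It then remains to transfer the four listed properties. Decay of total mass and the $L^\infty$ bound follow directly from \eqref{convU1U2}: the former by Fatou applied to the nonnegative $U_1$, the latter by a.e.\ convergence (or weak-$\ast$ lower semicontinuity in $L^\infty$). For the $L^p$ energy estimate \eqref{energyU2} I would rewrite $\int_{B_R}\psi(U_1)\mathcal{L}^{1-s}_\epsilon[U_1]\,dx$ as the nonnegative double integral $\iint G_\epsilon$ with $G_\epsilon(x,y)=\tfrac12(\psi(U_1(x))-\psi(U_1(y)))\big(U_1(x)-U_1(y)\big)(|x-y|^2+\epsilon^2)^{-\frac{N+2(1-s)}{2}}\ge0$, note $G_\epsilon\to G$ a.e.\ since $\psi$ is continuous and $U_1\to U_2$ a.e., and apply Fatou to get $\liminf\iint G_\epsilon\ge\iint G=\int_{B_R}\psi(U_2)(-\Delta)^{1-s}U_2$; for the viscosity term I would use that $U_1^{p/2}$ is bounded in $L^2(0,T:H^1_0(B_R))$ with weak limit identified (for $p\ge2$) as $U_2^{p/2}$ through the strong $L^2$ convergence of $U_1$, so weak lower semicontinuity of the $H^1_0$ seminorm applies. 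The second energy estimate \eqref{SecondEnergyU2} is handled the same way: the two Riesz-type quadratic terms pass to the limit using the continuous inclusion $L^2(B_R)\subset H^{-s/2}(B_R)$ and strong convergence, the $\delta$-term is the $\psi(z)=z$ instance of the Stroock--Varopoulos argument above, and the dissipation term uses weak lower semicontinuity of the $L^2$ norm together with $(U_1+\mu)^{(m-1)/2}\nabla(-\Delta)^{-1}\mathcal{L}^{1-s}_\epsilon[U_1]\rightharpoonup(U_2+\mu)^{(m-1)/2}\nabla(-\Delta)^{-s}U_2$ in $L^2(B_R\times(0,t))$, which follows from Lemma \ref{LemmaWeakConv} and dominated convergence of $(U_1+\mu)^{(m-1)/2}\phi$.

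\textbf{Main obstacle.} I expect the genuinely delicate point to be Lemma \ref{LemmaWeakConv}, the identification of the weak $L^2$-limit of the regularized nonlocal gradient. This is where the choice of the zero-order approximation $\mathcal{L}^{1-s}_\epsilon$ (rather than a direct kernel truncation of $(-\Delta)^{-s}$) is essential: one must interpolate between the Fourier-side convergence of Lemma \ref{LemmaAproxInverseFracLap}, which only holds when tested against smooth functions, and the strong-$L^2$ compactness of $U_1$, and then upgrade the resulting distributional convergence to weak $L^2$ convergence using the $\epsilon$-uniform $L^2$ bound coming from \eqref{SecondEnergyU1} and $\mu>0$. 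The companion subtlety throughout is that $U_1$ itself varies with $\epsilon$, so every ``convergence of the operator'' step has to be carried out with the uniform mass bound $\|U_1\|_1\le\|u_0\|_1$ in hand; once this is done, all the passages to the limit reduce to Fatou's lemma and weak lower semicontinuity of Hilbert-space norms.
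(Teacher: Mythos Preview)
Your proposal is correct and follows essentially the same approach as the paper: Simon's compactness via the triple $H^{1-s}_{\epsilon_0}(B_R)\subset L^2(B_R)\subset H^{-1}(B_R)$, the weak--strong product argument using Lemma~\ref{LemmaWeakConv} to pass the flux term to the limit, and then Fatou's lemma on the symmetrized double integral plus weak lower semicontinuity of Hilbert norms for the energy estimates. You have also correctly identified the main obstacle as the identification step in Lemma~\ref{LemmaWeakConv}, where the Fourier-side convergence of Lemma~\ref{LemmaAproxInverseFracLap} must be combined with the strong $L^2$ compactness of the varying $U_1$.
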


\subsection{Limit as \texorpdfstring{$R\to \infty$}{}}\label{subsec:R}

In this section we argue for weak solutions $U_2=(U_2)_R$ of Problem \eqref{ProblemMuDeltaR}. The energy estimates \eqref{energyU2} and \eqref{SecondEnergyU2}  will give us sufficient information to accomplish the limits.

\subsubsection{Existence of a limit}\label{SectExistR}

We remark that the integrals in $B_R$ can be interpreted like integrals on whole $\RN$ since we have chosen $U_2$ to be zero outside $B_R$. Moreover, we can get, from the energy estimates  \eqref{energyU2} and \eqref{SecondEnergyU2}, upper bounds which are independent on $R$. Note that the compactness technique used (see Lemma \ref{ConvSimon1}) requires compact embeddings, which motivates us to work on bounded domains.

\noindent \textbf{I. Local existence of a limit. }Let $\rho>0$  and consider the ball $B_\rho\subset\RN$. From \eqref{energyU2} with $p=2$ we get that $U_2 \in L^2(0,T:H^1(B_{\rho}))$ uniformly in $R>0$ and then $ \delta \Delta U_2 \in L^2(0,T: H^{-1}(\RN))$. Also, \eqref{SecondEnergyU2} gives $U_2 \in L^2(0,T: H^{1-s}(B_{\rho}))$.  From \eqref{SecondEnergyU2} we get that $\nabla \cdot((U_2+\mu)^{m-1}\nabla (-\Delta)^{-s} U_2) \in L^2(0,T:H^{-1}(\RN))$. Applying Lemma \ref{ConvSimon1} in the context
\[
H^{1-s}(B_\rho)\subset L^2(B_\rho)\subset H^{-1}(B_\rho),
\]
and noting that the left hand side inclusion is compact, we obtain that  there exists a limit function $V_\rho\in L^2(B_\rho\times(0,T))$ such that, up to sub-sequences,
\begin{equation}\label{convU2U3loc}
U_2  \to V_\rho\quad \text{as} \quad R \to \infty \quad \text{in} \quad L^2(B_{\rho} \times (0,T)).
\end{equation}

\noindent \textbf{II. Finding a global limit. }In order to define a global limit in  $L^2(\RN \times (0,T))$ we adapt the classical covering plus diagonal argument.
Let $\bigcup_{k=1}^\infty B_{\rho_k}  $, with $(\rho_k)_{k=1}^{\infty} \subset \mathbb{R}_{\geq 0}$, be a countable covering of $\RN$.
By \eqref{convU2U3loc} we obtain there exists a subsequence $(R_{j})_{j=1}^\infty$ such that $U_2|_{B_{\rho_1}}  \to V_{\rho_1}$ as $ R_{j} \to \infty $ in $ L^2(B_{\rho_1} \times (0,T))$ and $V_{\rho_1}: B_{\rho_1} \to \mathbb{R}$.   Next, we perform a similar argument starting from the subsequence $(R_{j})_{j=1}^\infty$ and $U_2|_{B_{\rho_2}}$ to get that there exists a sub-subsequence $(R_{j_k})_{k=1}^\infty \subset (R_j)_{j=1}^\infty$ such that   $U_2|_{\rho_2}  \to V_{{\rho_2}}$ as $ R_{j_k} \to \infty $ in $ L^2(B_{\rho_2} \times (0,T))$ and $V_{\rho_2}: B_{\rho_2} \to \mathbb{R}$.  It is clear that $V_{\rho_1}=V_{\rho_2}$ in  $B_{\rho_1}\cap B_{\rho_2}$. The argument continues for the remaining balls $B_{\rho_3}, B_{\rho_4},...$ . In the end we define the function $V: \RN \to \mathbb{R}$ such that $V|_{B_{\rho_k}}=V_{\rho_k}$ for $k\in \mathbb{N}_{>0}$. We denote this limit $U_3$ for better organization. Therefore, up to subsequences,
$$
U_2 \to U_3 \text{ as } R \to \infty \text{ in } L^2(0,T:L^2_{\textup{loc}}(\RN)).
$$
In particular, this implies $U_2 \to U_3$ as $R \to \infty$ a.e. in $\RN$.
 We recall that the functions $U_2$ are extended by $0$ in $\RN \setminus B_R$ and then, by the energy estimate \eqref{energyU2}, we have that  $\int_{\RN} U_2^2 dx$ is uniformly bounded in $R>0$. Then, by Fatou's Lemma we get that $U_3\in L^2(\R^N\times(0,T))$ since
$$\liminf_{R\to \infty} \int_0^T\int_{\RN }(U_2)^2 dxdt \ge  \int_0^T\int_{\RN }(U_3)^2 dxdt .$$

\subsubsection{The limit \texorpdfstring{$U_3$}{} is a solution of the new problem \texorpdfstring{\eqref{ProblemMuDelta}}{}}\label{sec:blibli}

Similarly, one can prove that  $U_3$ is a weak solution of Problem \eqref{ProblemMuDelta}:
\[
\left\{
\begin{array}{ll}
(U_3)_t= \delta \Delta U_3 +\nabla \cdot((U_3+\mu)^{m-1}\nabla (-\Delta)^{-s}U_3)&\text{for } (x,t)\in \RN \times (0,T),\\
U_3(x,0)=\widehat{u}_0(x) &\text{for } x \in \RN.
\end{array}
\right.
\tag{$P_{\delta\mu}$}\label{ProblemMuDelta} \]
The test functions used in Subsection \ref{Subsec:CompactEps} are compactly supported so the arguments perfectly work here. Let $\phi$ be a suitable test function supported in a ball $B_\rho$ for some $\rho>0$. For the convergence of the nonlinear term we use that
\begin{equation*}
(U_2 + \mu)^{m-1} \to (U_3+\mu)^{m-1}  \quad \text{in }L^2(B_\rho\times (0,T))\text{ as }R \to +\infty,
\end{equation*}
and
\begin{equation}\label{weakconvgradU2U3}
 \nabla (-\Delta)^{-s} U_2 \rightharpoonup \nabla (-\Delta)^{-s} U_3 \quad  \mbox{ in } L^2(B_\rho\times(0,T))\text{ as }R \to +\infty,
\end{equation}
where \eqref{weakconvgradU2U3} is proved as in  Lemma \ref{LemmaWeakConv}.

\subsubsection{Energy estimates}

All the energy estimates of $U_2$ can be written with integrals in $\RN$ and they provide upper bounds which independent on $R$. As before, the existence of a pointwise limit plus Fatou's Lemma allow us to pass to the limit as $R\to +\infty$. We refer to \cite{StanTesoVazJDE} for the proof of mass conservation. However, in Theorem \ref{ThmExistL1} we prove this result in the general setting of measure data.  We conclude with the following theorem.
\begin{theorem}
Let $s\in(0,1)$, $1<m<\infty$ and $N \ge 1$. There exists a weak solution $U_3$ of Problem \eqref{ProblemMuDelta} with initial data $u_0\in L^1(\R^N)\cap L^\infty(\R^N)$. Moreover, $U_3$ has the following properties:
\begin{enumerate}
\item \textbf{(Conservation of total mass)} For all $0<t<T$ we have
$\displaystyle{
\int_{\RN}U_3(x,t)dx =\int_{\RN}u_0(x)dx.
}$
\item \textbf{($L^{\infty}$ estimate) } For all $0<t<T$ we have  $||U_3(\cdot,t)||_\infty\leq ||u_0||_\infty$.
\item \textbf{($L^p$ energy estimate)} For all $1<p<\infty$ and $0<t<T$ we have
\begin{equation}\label{energyU3}
\begin{split}
 \int_{\RN} U_3^p(x,t)dx  & +\frac{4(p-1)\delta}{p}\int_0^t \int_{\RN} \left|\nabla (U_3^{p/2})\right|^2 dxdt \\\
 &+ p (p-1) \int_0^t \int_{\RN}  \psi(U_3) (-\Delta)^{1-s} U_3dxdt \le \int_{\RN} u_0^p(x)dx.\end{split}
\end{equation}
\item \textbf{(Second energy estimate)} For all $0<t<T$  we have
\begin{equation}\label{SecondEnergyU3}
\begin{split}
 \frac{1}{2} \int_{\RN}& \left|(-\Delta)^{-\frac{s}{2}} U_3(t)\right|^2  dx   +\delta \int_0^t\int_{\RN}\left|(-\Delta)^{\frac{1-s}{2}}[U_3]\right|^2  dx dt  \\
&
+\int_0^t \int_{\RN}  (U_3+\mu)^{m-1} \left| \nabla  (-\Delta)^{-s} U_3(t)\right|^2 dx  dt    \le \frac{1}{2} \int_{\RN} \left|(-\Delta)^{-\frac{s}{2}} u_0\right|^2 dx. \end{split}
\end{equation}
\end{enumerate}
\end{theorem}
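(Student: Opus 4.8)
The plan is to assemble the limit $R\to\infty$ purely from the uniform-in-$R$ a priori bounds already established for the family $U_2=(U_2)_R$ solving \eqref{ProblemMuDeltaR}. Existence of the limit and the fact that it solves \eqref{ProblemMuDelta} have in effect already been obtained in Sections \ref{SectExistR} and \ref{sec:blibli}: from \eqref{energyU2} with $p=2$ and from \eqref{SecondEnergyU2} the family is bounded in $L^2(0,T:H^1(B_\rho))$ and in $L^2(0,T:H^{1-s}(B_\rho))$ for every fixed $\rho>0$, the equation gives $(U_2)_t$ bounded in $L^2(0,T:H^{-1}(B_\rho))$, Simon's Lemma \ref{ConvSimon1} in the chain $H^{1-s}(B_\rho)\subset L^2(B_\rho)\subset H^{-1}(B_\rho)$ gives strong convergence on each $B_\rho$, and a covering-plus-diagonal extraction produces $U_3$ with $U_2\to U_3$ in $L^2(0,T:L^2_{\textup{loc}}(\RN))$, hence a.e., and $U_3\in L^2(\RN\times(0,T))$ by Fatou; testing the weak formulation against $\phi\in C_c^\infty$, the nonlinearity passes to the limit because $(U_2+\mu)^{m-1}\to(U_3+\mu)^{m-1}$ in $L^2_{\textup{loc}}$ (a.e.\ convergence $+$ the uniform $L^\infty$ bound $+$ dominated convergence) while $\nabla(-\Delta)^{-s}U_2\rightharpoonup\nabla(-\Delta)^{-s}U_3$ in $L^2_{\textup{loc}}$ as in Lemma \ref{LemmaWeakConv}. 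So the real content of the theorem is transferring the four listed properties.

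The $L^\infty$ bound $\|U_3(\cdot,t)\|_\infty\le\|u_0\|_\infty$ is immediate from the corresponding bound for $U_2$ and the a.e.\ convergence. For the $L^p$ energy estimate \eqref{energyU3} I would rewrite \eqref{energyU2} with every integral taken over $\RN$ (legitimate since $U_2\equiv 0$ outside $B_R$): the right-hand side $\int_{\RN}u_0^p$ is already independent of $R$, and each of the three nonnegative left-hand terms is lower semicontinuous along a suitable subsequence of the extraction above --- $\int_{\RN}U_3^p(t)\le\liminf\int_{\RN}U_2^p(t)$ by Fatou, $\int_0^t\int_{\RN}|\nabla U_3^{p/2}|^2\le\liminf\int_0^t\int_{\RN}|\nabla U_2^{p/2}|^2$ by weak lower semicontinuity of the $H^1(\RN)$ seminorm (after identifying the weak limit of $U_2^{p/2}$ as $U_3^{p/2}$ via the $L^2_{\textup{loc}}$ convergence and the uniform $L^\infty$ bound), and, writing the Stroock--Varopoulos term as $\int_0^t\iint G_R$ with $G_R\ge 0$ and $G_R\to G$ a.e.\ exactly as in Section \ref{sec:bloblo}, $\int_0^t\iint G\le\liminf\int_0^t\iint G_R$ again by Fatou. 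Choosing a single subsequence along which all three quantities have their $\liminf$ as a limit and using $\liminf a_R+\liminf b_R+\liminf c_R\le\liminf(a_R+b_R+c_R)\le\int_{\RN}u_0^p$ gives \eqref{energyU3}. The second energy estimate \eqref{SecondEnergyU3} follows the same scheme: the first two terms use that $(-\Delta)^{-s/2}U_2$ and $(-\Delta)^{(1-s)/2}U_2$ are bounded in $L^2(\RN\times(0,T))$ uniformly in $R$ by \eqref{SecondEnergyU2}, hence converge weakly in $L^2$ with limits identified as $(-\Delta)^{-s/2}U_3$ and $(-\Delta)^{(1-s)/2}U_3$ by testing against $C_c^\infty$, after which weak lower semicontinuity of the $L^2$ norm applies; the last term is handled as at the end of Section \ref{Subsec:Eps}, namely $(U_2+\mu)^{(m-1)/2}\nabla(-\Delta)^{-s}U_2\rightharpoonup(U_3+\mu)^{(m-1)/2}\nabla(-\Delta)^{-s}U_3$ in $L^2$ followed by weak lower semicontinuity. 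Conservation of mass is quoted from \cite{StanTesoVazJDE}, and is in any case the $L^1$ instance of the statement we prove for measure data in Theorem \ref{ThmExistL1}.

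I expect the only genuinely delicate point to be the identification of the weak limits of the nonlocal quantities on the \emph{whole} space, since the test-function integrals involve $\nabla(-\Delta)^{-s}\phi$ and $(-\Delta)^{-s/2}\phi$ which are merely decaying, not compactly supported. For $\nabla(-\Delta)^{-s}U_2$ the remedy is the splitting already indicated after Lemma \ref{LemmaWeakConv}: estimate the tail over $\RN\setminus B_\rho$ by $\|U_2-U_3\|_{L^2(\RN\times(0,T))}\,\|\nabla(-\Delta)^{-s}\phi\|_{L^2((\RN\setminus B_\rho)\times(0,T))}$ using the uniform $L^2$/mass bound and the decay of the kernel-smoothed test function, then use $L^2_{\textup{loc}}$ convergence on $B_\rho$ and let first $R\to\infty$ and then $\rho\to\infty$; the same splitting controls $(-\Delta)^{-s/2}U_2$, where one must remember that $(-\Delta)^{-s/2}$ is not $L^2$-bounded, so the required uniform bound has to be read off from the second energy estimate rather than from $\|U_2\|_{L^2}$. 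Everything else is the routine combination of Fatou's Lemma, weak lower semicontinuity of Hilbert-space norms, and the ``weak $\times$ strong $\to$ weak'' product rule already used repeatedly in Section \ref{Subsec:Eps}.
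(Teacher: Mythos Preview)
Your proposal is correct and follows essentially the same route as the paper. The paper's own treatment of this step is deliberately terse---it simply writes the $U_2$ estimates as integrals over $\RN$, invokes ``the existence of a pointwise limit plus Fatou's Lemma'' and the arguments ``as before'' (i.e.\ the mix of Fatou and weak lower semicontinuity already spelled out in Section~\ref{sec:bloblo} and at the end of Section~\ref{Subsec:Eps}), quotes mass conservation from \cite{StanTesoVazJDE}/Theorem~\ref{ThmExistL1}, and for the identification of $\nabla(-\Delta)^{-s}U_2$ relies on exactly the $B_\rho$/$\RN\setminus B_\rho$ splitting you describe (this is precisely why the paper kept that splitting after \eqref{bleble}). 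Your write-up is in fact more explicit than the paper's, but the ingredients and their organization coincide.
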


\subsection{Limit as \texorpdfstring{$\mu \to 0$}{}}\label{Subsec:mu}
We remark that some of previous arguments can not be applied here since $(U_3+\mu)^{-(m-1)}$ may degenerate as $\mu \to 0$ close to the free boundary. Therefore we adapt the proof to overcome this issue.

\subsubsection{Existence of a limit}
The energy estimates \eqref{energyU3} and \eqref{SecondEnergyU3} gives us uniform upper bounds in $\mu$ which allows us to prove the existence of a limit
\begin{equation}\label{blublu}
U_3 \to  U_4 \quad \textup{as}\quad  \mu \to 0
 \quad \textup{in} \quad L^2_{\textup{loc}}(\RN \times (0,T)),
 \end{equation}
 using the same covering plus diagonal argument of Section \ref{subsec:R}.

\subsubsection{The limit \texorpdfstring{$U_4$}{} is a solution of the new problem \texorpdfstring{\eqref{ProblemDelta}}{}}
As before the compact support of the test functions allows us to prove that $U_4$ is in fact a weak solution of the problem:
\[
\left\{
\begin{array}{ll}
(U_4)_t= \delta \Delta U_4 +\nabla \cdot(U_4^{m-1}\nabla  (-\Delta)^{-s} U_4)&\text{for } (x,t)\in \RN \times (0,T),\\
U_4(x,0)=u_0(x) &\text{for } x \in \RN.
\end{array}
\right.
\tag{$P_{\delta}$}\label{ProblemDelta} \]
The first integral of the weak formulation passes to the limit like in \eqref{weaklimiteps} as consequence of \eqref{blublu}. It remains to prove that
\begin{equation}\label{blabla1}
\int_0^T \int_{\R^N} (U_3+\mu)^{m-1}\nabla (-\Delta)^{-s}U_3 \cdot \nabla \phi dx dt\stackrel{\mu \to 0}{\longrightarrow} \int_0^T \int_{\R^N} U_4^{m-1}\nabla (-\Delta)^{-s}U_4 \cdot \nabla \phi dx dt.
\end{equation}
Let $\phi$ be supported in $B_\rho$ for some $\rho>0$. It is clear that
\begin{equation}\label{blabla2}
(U_3+\mu)^{m-1}\to U_4^{m-1} \quad \textup{as} \quad \mu \to 0 \quad \textup{in} \quad L^2(B_\rho\times (0,T)).
\end{equation}
Moreover, from the second energy estimate, we get that there exists a weak limit of $U_3$ in $L^2(0,T:H^{1-s}(B_\rho))$. Furthermore, the limit can be identified in $L^2(B_\rho\times(0,T))$ from \eqref{blublu}, and then
\[
U_3 \rightharpoonup U_4 \quad \text{ as } \quad \mu\to0 \quad \text{in} \quad L^2(0,T:H^{1-s}(B_\rho)).
\]
Since the term $\nabla(-\Delta)^{-s}$ is of order $1-2s$, which is smaller than $1-s$, then
\begin{equation}\label{blabla3}
\nabla (-\Delta)^{-s}U_3\rightharpoonup \nabla (-\Delta)^{-s}U_4 \quad   \text{in} \quad L^2(B_\rho\times(0,T)) .
\end{equation}
Combining \eqref{blabla2} and \eqref{blabla3} the convergence \eqref{blabla1} follows.

\subsubsection{Energy estimates}
We state the main properties of the solution of Problem \eqref{ProblemDelta}.
\begin{theorem}
Let $s\in(0,1)$, $1<m<\infty$ and $N \ge 1$. There exists a weak solution $U_4$ of Problem \eqref{ProblemDelta} with initial data $u_0\in L^1(\R^N)\cap L^\infty(\R^N)$. Moreover, $U_4$ has the following properties:
\begin{enumerate}
\item \textbf{(Conservation of total mass)} For all $0<t<T$ we have
$\displaystyle{
\int_{\RN}U_4(x,t)dx =\int_{\RN}u_0(x)dx.
}$
\item \textbf{($L^{\infty}$-estimate) } For all $0<t<T$ we have  $||U_4(\cdot,t)||_\infty\leq ||u_0||_\infty$.
\item \textbf{($L^p$-decay energy estimate)} For all $1<p<\infty$ and $0<t<T$
\begin{equation}\label{energyU4}
\begin{split}
 \int_{\RN} U_4^p(x,t)dx  & +\frac{4(p-1)\delta}{p}\int_0^t \int_{\RN} \left|\nabla (U_4^{p/2})\right|^2 dxdt \\\
 &+ \frac{p (p-1)}{m+p-2} \int_0^t \int_{\RN}  U_4^{m+p-2} (-\Delta)^{1-s} U_4dxdt \le \int_{\RN} u_0^p(x)dx.\end{split}
\end{equation}
\item \textbf{(Second energy estimate)} For all $0<t<T$  we have
\begin{equation}\label{SecondEnergyU4}
\begin{split}
 \frac{1}{2} \int_{\RN} \left|(-\Delta)^{-\frac{s}{2}} U_4(t)\right|^2 & dx   +\delta \int_0^t\int_{\RN}\left|(-\Delta)^{\frac{1-s}{2}}[U_4]\right|^2  dx dt  \\
&
+\int_0^t \int_{\RN}  U_4^{m-1} \left| \nabla  (-\Delta)^{-s} U_4(t)\right|^2 dx  dt    \le \frac{1}{2} \int_{\RN} \left|(-\Delta)^{-\frac{s}{2}} u_0\right|^2 dx. \end{split}
\end{equation}
\end{enumerate}
\end{theorem}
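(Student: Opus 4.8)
We follow the scheme of the two previous limits (Sections~\ref{Subsec:Eps} and \ref{subsec:R}): the plan is to pass to the limit $\mu\to0$ in the family $U_3=(U_3)_\mu$ of weak solutions of Problem \eqref{ProblemMuDelta} and to check that each of the four stated properties survives. First we collect the bounds that are uniform in $\mu$ (with $\delta,s,m$ fixed): the case $p=2$ of \eqref{energyU3} gives $U_3$ bounded in $L^2(0,T:H^1(B_\rho))$; the $\delta$-viscosity term of \eqref{SecondEnergyU3} gives $U_3$ bounded in $L^2(0,T:H^{1-s}(B_\rho))$; and $\|U_3(t)\|_\infty\le\|u_0\|_\infty$, $\|U_3(t)\|_1\le\|u_0\|_1$. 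Using the equation together with the $L^2$ bound on $(U_3+\mu)^{(m-1)/2}\nabla(-\Delta)^{-s}U_3$ coming from \eqref{SecondEnergyU3}, we also get $(U_3)_t$ bounded in $L^2(0,T:H^{-1}(B_\rho))$. Then Simon's compactness criterion (Lemma~\ref{ConvSimon1}) in $H^{1-s}(B_\rho)\subset L^2(B_\rho)\subset H^{-1}(B_\rho)$, followed by the covering-and-diagonal argument of Section~\ref{subsec:R}, produces a nonnegative limit $U_4$ with $U_3\to U_4$ a.e.\ and in $L^2_{\textup{loc}}(\RN\times(0,T))$; Fatou's lemma and the uniform $L^2$, $L^\infty$ bounds give $U_4\in L^2(\RN\times(0,T))\cap L^\infty(\RN\times(0,T))$. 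This is \eqref{blublu}, and \eqref{blabla1}--\eqref{blabla3} show that $U_4$ solves Problem \eqref{ProblemDelta} weakly: the linear and viscosity terms converge by the $L^2_{\textup{loc}}$ convergence, and the nonlinear term is the product of $(U_3+\mu)^{m-1}\to U_4^{m-1}$ strongly in $L^2(B_\rho\times(0,T))$ (dominated convergence, via the $L^\infty$ bound) with $\nabla(-\Delta)^{-s}U_3\rightharpoonup\nabla(-\Delta)^{-s}U_4$ weakly in $L^2(B_\rho\times(0,T))$, the latter because $U_3\rightharpoonup U_4$ in $L^2(0,T:H^{1-s}(B_\rho))$ and $\nabla(-\Delta)^{-s}$ has order $1-2s<1-s$.

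It remains to pass to the limit in the two energy estimates. Mass conservation we postpone to Theorem~\ref{ThmExistL1} (where it is proved for general measure data), and the $L^\infty$ bound is immediate from $\|U_3(t)\|_\infty\le\|u_0\|_\infty$ and the a.e.\ convergence. For the $L^p$ estimate we pass to the limit in \eqref{energyU3} term by term: $\int_{\RN}U_3^p(t)\,dx$ is controlled by Fatou's lemma in $x$ for a.e.\ $t$, and the $\delta$-gradient term by weak lower semicontinuity of the $\dot H^1$ norm once the weak $L^2(0,T:H^1)$ limit of $U_3^{p/2}$ is identified with $U_4^{p/2}$ through the a.e.\ convergence and the uniform $L^\infty$ bound, exactly as in Section~\ref{sec:bloblo}. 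The delicate term is the nonlocal one. Writing it in Gagliardo form as in \eqref{limitGU1},
\[
p(p-1)\int_0^t\int_{\RN}\psi_\mu(U_3)(-\Delta)^{1-s}U_3\,dx\,dt=\int_0^t\int_{\RN}\int_{\RN}G_\mu(x,y)\,dx\,dy\,dt,
\]
with $\psi_\mu(z)=\int_0^z\zeta^{p-2}(\zeta+\mu)^{m-1}\,d\zeta$ and
\[
G_\mu(x,y)=\frac{C_{N,1-s}\,p(p-1)}{2}\,\frac{\big(\psi_\mu(U_3(x))-\psi_\mu(U_3(y))\big)\big(U_3(x)-U_3(y)\big)}{|x-y|^{N+2(1-s)}}\ge0,
\]
the sign being due to the monotonicity of $\psi_\mu$. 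Since $m+p-2>0$ and $\psi_\mu\to\psi_0$, $\psi_0(z)=z^{m+p-2}/(m+p-2)$, uniformly on $[0,\|u_0\|_\infty]$ as $\mu\to0$, and $U_3\to U_4$ a.e., we have $G_\mu\to G$ a.e.\ (with $G$ the analogous expression built from $\psi_0$ and $U_4$), so Fatou's lemma yields
\[
\liminf_{\mu\to0}\int_0^t\int_{\RN}\int_{\RN}G_\mu\ \ge\ \int_0^t\int_{\RN}\int_{\RN}G\ =\ \frac{p(p-1)}{m+p-2}\int_0^t\int_{\RN}U_4^{m+p-2}(-\Delta)^{1-s}U_4\,dx\,dt,
\]
which together with the previous two inequalities gives \eqref{energyU4}. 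The second energy estimate \eqref{SecondEnergyU4} is obtained the same way: the $(-\Delta)^{-s/2}$ term and the $\delta$-Dirichlet term are weakly lower semicontinuous functionals of $U_3$ (using $U_3\rightharpoonup U_4$ in $L^2(0,T:H^{1-s})$), while for the dissipation term one uses that $(U_3+\mu)^{(m-1)/2}\nabla(-\Delta)^{-s}U_3\rightharpoonup U_4^{(m-1)/2}\nabla(-\Delta)^{-s}U_4$ in $L^2$ --- because $(U_3+\mu)^{(m-1)/2}\phi\to U_4^{(m-1)/2}\phi$ strongly in $L^2$ for every $\phi\in L^2$ and $\nabla(-\Delta)^{-s}U_3\rightharpoonup\nabla(-\Delta)^{-s}U_4$ weakly --- together with the weak lower semicontinuity of the $L^2$ norm.

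The hard part is the obstruction flagged at the start of Section~\ref{Subsec:mu}: unlike in the $\epsilon\to0$ step, we cannot recover an $L^2$ bound on $\nabla(-\Delta)^{-s}U_3$ by dividing $(U_3+\mu)^{(m-1)/2}\nabla(-\Delta)^{-s}U_3$ by $\mu^{(m-1)/2}$, since this factor blows up as $\mu\to0$ near the free boundary. The remedy is that, for fixed $\delta$, the viscosity term in \eqref{SecondEnergyU3} supplies a bound on $U_3$ in $L^2(0,T:H^{1-s})$ that is independent of $\mu$; because $\nabla(-\Delta)^{-s}$ has the strictly lower order $1-2s<1-s$, this bound transfers to $\nabla(-\Delta)^{-s}U_3$ and at the same time identifies its weak limit. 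A minor technical point, handled as usual, is the upgrade of the energy inequalities from a.e.\ $t$ to every $t$, which follows from the non-increasing character of the principal left-hand terms.
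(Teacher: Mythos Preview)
Your proof is correct and follows essentially the same approach as the paper: Simon's compactness on balls plus the covering-and-diagonal argument for the existence of the limit, the strong--weak product argument for the nonlinear term in the weak formulation, and Fatou/weak lower semicontinuity for the energy estimates. You also correctly isolate the new obstruction of this step --- the loss of the $\mu^{-(m-1)/2}$ trick --- and its remedy via the $\delta$-viscosity $H^{1-s}$ bound, exactly as the paper does. The only cosmetic difference is that for the dissipation term in \eqref{SecondEnergyU4} you argue via weak $L^2$ convergence of $(U_3+\mu)^{(m-1)/2}\nabla(-\Delta)^{-s}U_3$ plus weak lower semicontinuity (mirroring Section~\ref{Subsec:Eps}), whereas the paper simply invokes Fatou's lemma on the pointwise limit $(U_3+\mu)^{m-1}\to U_4^{m-1}$; both routes are valid and already appear elsewhere in the paper.
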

\noindent The proof is as in the previous part. The term $\displaystyle \int_0^t \int_{\RN}  (U_3+\mu)^{m-1} \left| \nabla  (-\Delta)^{-s} U_4(t)\right|^2 dx  dt   $ passes to the limit by Fatou's Lemma since $(U_3+\mu)^{m-1} \to U_4 ^{m-1}$ as $\mu \to 0$ pointwise.

\subsection{Limit as \texorpdfstring{$\delta \to 0$}{}}\label{Subsec:delta}

This part is quite interesting and brings some novelty in the techniques we have employed so far.
Here we  use  a different compactness criteria in order to derive the convergence  as $\delta \to 0$. This is a consequence of the lack of regularity that was given by the $\delta$-term in the previous approximating problems.

Estimates \eqref{energyU4} and \eqref{SecondEnergyU4} provide  an upper bound independent of $\delta$. The terms with $\delta$ coefficient are positive and bounded and therefore $U_4$ satisfies:
\begin{equation}\label{energyU4b}
 \int_{\RN} U_4^p(x,t)dx + \frac{p (p-1)}{m+p-2} \int_0^t \int_{\RN}  U_4^{m+p-2} (-\Delta)^{1-s} U_4dxdt \le \int_{\RN} u_0^p(x)dx,
\end{equation}
and
\begin{equation}\label{SecondEnergyU4b}
 \frac{1}{2} \int_{\RN} \left|(-\Delta)^{-\frac{s}{2}} U_4(t)\right|^2  dx
+\int_0^t \int_{\RN}  U_4^{m-1} \left| \nabla  (-\Delta)^{-s} U_4(t)\right|^2 dx  dt    \le \frac{1}{2} \int_{\RN} \left|(-\Delta)^{-\frac{s}{2}} u_0\right|^2 dx.
\end{equation}

\subsubsection{Existence of a limit. Compactness estimate II} \label{subsec:compacttemam} We will prove compactness for the following sequence:
\[
W_\delta:=
\left\{
\begin{array}{ll}
U_4 \quad   &  \text{if} \quad  m \le 2 \\
U_4^m \quad & \text{if}\quad   m >2.
\end{array}
\right.
\]
The idea is to apply Theorem \ref{RakotosonTemam3} for $W_\delta$ and in order to use this compactness criteria we need to work on a bounded domain $B_\rho$ for $\rho>0$. From \eqref{energyU4b}, applying Stroock-Varopoulos we obtain
\begin{equation}\label{energyU4SV}
\begin{split}
\int_{\RN} u_0^p(x)dx &\ge \int_{\RN} U_4^p(x,t)dx +\frac{4p(p-1)}{(m+p-1)^2}  \int_0^t \int_{\RN}  \left| (-\Delta)^{\frac{1-s}{2}} U_4^{\frac{m+p-1}{2}} \right|^2 dxdt.
\end{split}
\end{equation}
In this way we get a uniform bound for $W_\delta$ in  $L^2(0,T:H^{1-s}(B_\rho))$ by using \eqref{energyU4SV} with
  $p=3-m$ if $m\le 2$ and $p=m+1$ if $m>2$.
  Note that the exponent $3-m$ is again critical in the proof of existence, as happened in the article \cite{StanTesoVazJDE}. In both cases we get that there exists a weak limit
$$
W_\delta \rightharpoonup W \text{ in }L^2(0,T:H^{1-s}(B_\rho)).
$$
Then, hypothesis a) in Theorem \ref{RakotosonTemam3} is satisfied in the context $V=H^{1-s}(B_\rho)$ and $H=L^2(B_\rho)$. However, b) also holds due to the energy estimate  \eqref{energyU4SV}  for $p=2q$ where $q=1$ if $m\leq2$ and $q=m$ if $m>2$. Indeed we have the following estimate
$$
\sup_{\delta>0}\|W_\delta(t)\|_{L^2(B_\rho)} = \sup_{\delta>0}\|U_4^{q}(t)\|_{L^2(B_\rho)}= \sup_{\delta>0}\|U_4(t)\|_{L^{2q}(B_\rho)}^{q} \le \|u_0\|_{L^{2q}(B_\rho)}^{q} < +\infty
$$
for  every $ t \in (0,T)$. It remains to prove assumption c) of  Theorem  \ref{RakotosonTemam3}. Since $L^2(B_\rho)$ is a separable Hilbert space, we can find a countable set $D$ dense in $L^2(B_\rho)$. Moreover, we can assume that the elements $\psi\in D$ are smooth and nonnegative.

We want to prove that the family of  functions $g^\delta_\psi(t):=<U_4(\cdot,t),\psi>_{L^2(B_\rho)}$ is relatively compact in $L^1((0,T))$. First, $\{g^\delta_\psi\}_{\delta>0}$ is equibounded in $L^1((0,T))$ since
\begin{equation*}
\begin{split}
\|g^\delta_\psi\|_{L^1((0,T))}:=&\int_0^T \int_{B_\rho} U_4(x,t)\psi(x)dxdt\leq\left( \int_0^T \int_{B_\rho} (U_4)^2dxdt\right)^{1/2}\left( \int_0^T \int_{B_\rho} \psi^2dxdt\right)^{1/2}\\
 \le& T\|u_0\|_{L^2(B_\rho)}\|\psi\|_{L^2(B_\rho)}.
\end{split}
\end{equation*}
Moreover, we also have that $g_\psi^\delta(t)$ is equicontinuous in $L^1((0,T))$: using \eqref{ProblemDelta} we have
\begin{equation*}
\begin{split}
&\int_0^T(g_\psi^\delta)'(t)dt=\delta \int_0^T <U_4,\Delta\psi>dt+\int_0^T<(-\Delta)^{-\frac{s}{2}}U_4, \nabla (-\Delta)^{-\frac{s}{2}} (U_4^{m-1}\nabla \psi)>dt\\
&\leq \delta \|U_4\|_{L^2(B_\rho\times(0,T))}T^{1/2}\|\Delta\psi\|_{L^2(B_\rho)}+ \|(-\Delta)^{-\frac{s}{2}}U_4\|_{L^2(B_\rho\times (0,T))} \|\nabla (-\Delta)^{-\frac{s}{2}} (U_4^{m-1}\nabla \psi)\|_{L^2(B_\rho\times(0,T))},
\end{split}
\end{equation*}
where all the terms in the last inequality are absolutely bounded in $\delta$ due to the energy estimates \eqref{energyU4b} and \eqref{SecondEnergyU4b}. We use the fact that for any smooth  function $\psi\in D$ we have that $\psi U_4^{m-1} \in  L^2(0,T:H^{1-s}(\RN))$ and then
$\nabla   (-\Delta)^{-\frac{s}{2}} ( U_4^{m-1} \nabla \psi) \in L^2(\RN\times (0,T))$ uniformly on $\delta$.

In this way, if $m\leq2$, since $U_4 = W_\delta$, we have that hypothesis c) of Theorem  \ref{RakotosonTemam3} is satisfied by $W_\delta$. If $m\geq2$, then $<U_4^m, \psi>_{L^2(B_\rho)}$ is clearly equibounded in $L^1((0,T))$. Moreover, by the equicontinuity of  $g_\psi^\delta(t)$ and the following estimate
\[
\int_{t_1}^{t_2}<U_4^m, \psi>_{L^2(B_\rho)}dt \leq \|u_0\|^{m-1}_{L^\infty(\R^N)}\int_{t_1}^{t_2}<U_4, \psi>_{L^2(\Omega)}dt,
\]
we have that $<U_4^m, \psi>_{L^2(B_\rho)}$ is also equicontinuous in  $L^1((0,T))$. We apply  Theorem \ref{RakotosonTemam1}  to obtain
$$
W_\delta \rightarrow W \text{ in }L^2(B_\rho \times (0,T)).
$$
For $m \le2$ this means $U_4 \rightarrow W \text{ in }L^2(B_\rho \times (0,T))$ and we are done.
Now, let $m > 2$. We have $W_\delta=U_4^{m} \to W$ in $L^2(B_\rho\times (0,T))$.
Since $(U_4)_\delta \in L^\infty(\RN \times (0,T))$ uniformly in $\delta$ then also the limit $W (x,t) \in L^\infty (\RN \times (0,T))$. In both cases, by the covering plus diagonal argument and Fatou's Lemma as in Section \ref{SectExistR}, we obtain, up to a subsequence,  that
\begin{equation}\label{convU4u}
U_4 \rightarrow u \text{ in }L^2_{\textup{loc}}(\RN \times (0,T)).
\end{equation}

\subsubsection{The limit \texorpdfstring{$u$}{} is a weak solution of Problem \texorpdfstring{\eqref{model1}}{}} \label{subsubsec:limitissol}

We pass to the limit as $\delta \to 0$ in the weak formulation corresponding to Problem \eqref{ProblemDelta}.
Let $\phi$ a compactly supported test function  with support in $B_\rho$. Then by \eqref{convU4u} we get
\[
\int_0^T\int_{\R^N} U_4\phi_tdxdt\to\int_0^T\int_{\R^N} u\phi_tdxdt \quad \text{as} \quad \delta \to 0.
\]
Moreover,
\[
\delta \int_0^T\int_{\R^N} U_4 \Delta \phi dxdt\to0 \quad \text{as} \quad \delta \to 0.
\]
It remains to prove that
\begin{equation}\label{ConvGradU4}
\int_0^T\int_{\RN} U_4^{m-1} \nabla (-\Delta)^{-s} U_4  \nabla \phi dx dt \to \int_0^T\int_{\RN} u^{m-1} \nabla (-\Delta)^{-s} u  \nabla \phi dxdt.
\end{equation}
\noindent\textbf{I.} Case $m\le 2$. From $L^p$ estimate \eqref{energyU4SV} with $p=3-m$ we have that $U_4 \in H^{1-s}(B_\rho)$ and then $U_4 \rightharpoonup u$ in $H^{1-s}(\Omega)$. As a consequence
\begin{equation}\label{weakconvgradU4u}
\nabla (-\Delta)^{-s}U_4\rightharpoonup \nabla (-\Delta)^{-s}u \quad   \text{in} \quad L^2(B_\rho\times(0,T)) .
\end{equation}
Moreover, we have that $U_4^{m-1}\to u^{m-1}$ in $L^2(B_\rho\times(0,T))$, which together with  \eqref{weakconvgradU4u} implies \eqref{ConvGradU4}.

\noindent\textbf{II.}  Case $m>2$.   We will use the fact that $\nabla \cdot  (-\Delta)^{-s} ( U_4^{m-1} \nabla \phi) \in L^p(\RN\times (0,T))$ uniformly on $\delta$, for a certain  $p>1$. For the sake of a clean presentation, we present the proof of this fact in Appendix \ref{app:techresult}.   On the other hand, $U_4\in L^q(\RN)$ for any $L^q(\RN)$ uniformly on $\delta>0$ and thus we integrate by parts the first integral of \eqref{ConvGradU4} to get
 $$I(U_4):= \int_0^T\int_{\RN} U_4^{m-1} \nabla (-\Delta)^{-s} U_4  \nabla \phi dx dt = \int_0^T\int_{\RN}      U_4\nabla  \cdot   (-\Delta)^{-s} ( U_4^{m-1} \nabla \phi) dx dt. $$
Moreover, for every $\phi$  there exists a weak limit
$$
\nabla \cdot (-\Delta)^{-s} ( U_4^{m-1} \nabla \phi) \rightharpoonup v \quad \text{as}\quad \delta \to 0 \quad \text{in}\quad L^p(\RN\times (0,T)).
$$
We identify the limit in the sense of distributions and show that $v= \nabla \cdot(-\Delta)^{-s} ( u^{m-1} \nabla \phi) $: indeed  we have that
$$\int_0^T\int_{\RN} U_4^{m-1}  \nabla \phi \nabla   (-\Delta)^{-s}\psi    dx dt \to  \int_0^T\int_{\RN} u^{m-1}  \nabla \phi \nabla   (-\Delta)^{-s}\psi  dx dt \quad \text{for all} \quad \psi \in C_c^\infty(\R^N\times (0,T))$$
since $U_4^{m-1}  \to u^{m-1}$ in $L^1_{\text{loc}}(\RN \times (0,T)).$   Therefore
\begin{equation} \label{limU4PhiWeak}
\nabla \cdot(-\Delta)^{-s} ( U_4^{m-1} \nabla \phi) \rightharpoonup \nabla \cdot (-\Delta)^{-s} ( u^{m-1} \nabla \phi) \quad \text{as}\quad \delta \to 0 \quad \text{in}\quad L^p(\R^N\times(0,T)),
\end{equation}
for every test function $\phi$.

Let $R>0$. Then
\begin{align*}
 I(U_4)&=  \int_0^T\int_{B_R}      U_4\nabla  \cdot   (-\Delta)^{-s} ( U_4^{m-1} \nabla \phi) dx dt + \int_0^T\int_{\RN \setminus B_R}      U_4\nabla  \cdot   (-\Delta)^{-s} ( U_4^{m-1} \nabla \phi) dx dt\\
 &=I_1(U_4) + I_2(U_4).
\end{align*}
Since the sequence $U_4^{m-1} \nabla \phi$ has the same compact support for all $\delta$ then $\nabla  \cdot   (-\Delta)^{-s} ( U_4^{m-1} \nabla \phi) $ uniformly decays for large $|x|$ (see \eqref{grad1-2s}). Then we can choose $R$ big enough  such that $I_2(U_4)<\epsilon/3$. In the same way $I_2(u)<\epsilon/3$.
Now, with this given $R$ we use that $U_4 \to u$ in  $L^q_{\text{loc}}(\RN \times(0,T))$ together with \eqref{limU4PhiWeak} and we have $I_1(U_4) \to I_1(u)$ as $\delta \to 0.$
Thus, we choose $\delta>0$ such that
\begin{align*}
 | I(U_4)-I(u)|&\le |I_1(U_4)-I_1(u)| + |I_2(U_4)| +|I_2(u)| <\frac{\epsilon}{3} +\frac{\epsilon}{3}+\frac{\epsilon}{3}=\epsilon.
\end{align*}

We integrate by parts to obtain the desired convergence  \eqref{ConvGradU4}.

\subsubsection{Energy estimates}
We pass to the limit in the energy estimates. From \eqref{energyU4b}-\eqref{energyU4SV} we get that
$$
 \int_{\RN} u^p(x,t)dx + \frac{p (p-1)}{m+p-2} \int_0^t \int_{\RN}  u^{m+p-2} (-\Delta)^{1-s} u \, dxdt \le \int_{\RN} u_0^p(x)dx.
$$
From \eqref{SecondEnergyU4b} we get
$$
 \frac{1}{2} \int_{\RN} \left|(-\Delta)^{-\frac{s}{2}} u(t)\right|^2  dx
+\int_0^t \int_{\RN}  u^{m-1} \left| \nabla  (-\Delta)^{-s} u(t)\right|^2 dx  dt    \le \frac{1}{2} \int_{\RN} \left|(-\Delta)^{-\frac{s}{2}} u_0\right|^2 dx.
$$

We have obtained so far the existence of a weak solution of Problem \eqref{model1} enjoying regularity properties and the corresponding energy estimates. This concludes the proof of Theorem \ref{Thm1PMFP}.

\subsection{Dealing with the case \texorpdfstring{$N=1$, $s\in [\frac{1}{2},1)$}{}}\label{SectN1}

The operator $(-\Delta)^{-s}$ is not well defined when $N=1$ and $\frac{1}{2} <s<1$ since the convolution kernel $\displaystyle K_s=\frac{1}{|x|^{1-2s}}$ does not decay at infinity.
Therefore it does not make sense to think of equation \eqref{model1} in terms of a pressure. This may not be very convenient, but the issue can be avoided by writing the equation as
$$
u_t=\nabla \cdot( u^{m-1} \nabla^{1-2s} u),
$$
where $\nabla^{1-2s}$ denotes formally the composition operator $\nabla(-\Delta)^{-s}$. According to \cite{BilerImbertKarch}, $\nabla^{1-2s}$ can be written in the whole range $0<s<1$ in terms of the singular integral formula for smooth and bounded functions
\begin{equation}\label{grad1-2s}
\nabla^{1-2s}\psi(x)= C_{N,s} \int (\psi(x)-\psi(x+z)) \frac{ \text{sign}(z)}{|z|^{N+1-2s}} dz.
\end{equation}
Note that for $\frac{1}{2} <s<1$, $|z|^{-N-1+2s}\in L^1_{\textup{loc}}(\RN)$ and decays at infinity. Note also that $\nabla^{1-2s}$ has the Fourier symbol given by $i \, \text{sign}(\xi) |\xi|^{1-2s}.$ Moreover, the operator $(-\Delta)^{-\frac{s}{2}}$ is well defined in the whole range $0<s<1$ even in dimension $N=1$. In this way, we have the following property:
\[
\nabla^{1-2s}=(-\Delta)^{-\frac{s}{2}} \nabla (-\Delta)^{-\frac{s}{2}} =(-\Delta)^{-\frac{s}{2}}\nabla^{1-s}.
\]

The $L^p$ energy estimate \eqref{Lpdecay} still has the same form, while the second energy estimate \eqref{SecondEnergy} needs has to be reformulated as
\[
  \frac{1}{2} \intr \left|(-\Delta)^{-\frac{s}{2}} u(t)\right|^2 dx  +
\int_0^t \intr  u^{m-1} \left|\nabla^{1-2s}  u(t)\right|^2 dx  dt \le \frac{1}{2} \intr \left|(-\Delta)^{-\frac{s}{2}} u_0\right|^2 dx.
\]
The proofs of Section  \ref{SectExist} follow similarly. For the $\epsilon \to 0$ limit, we shall use part b) of Lemma \ref{LemmaAproxInverseFracLap}.

\section{Existence of solutions with measure data}
\label{SectionExistL1}
In this section we give the proof of the existence of weak solutions  taking as initial data any $\mu \in \mathcal{M}^{+} (\RN)$, the space of nonnegative Radon measures on $\RN$ with \em{finite mass}\em. In particular, this includes the case of only integrable data $u_0 \in L^1(\RN)$. Therefore, we improve the results from \cite{CaffVaz,StanTesoVazJDE} to less restrictive initial data. As precedent we mention \cite{CaffSorVaz} where the authors  extend the existence theory for $m=2$ to every $u_0 \in L^1(\RN)$. The case of measures has been considered for the case $m=2,\ s \to 1$ in \cite{SerfVaz}, and for model \eqref{FPME} in \cite{VazquezBarenblattFPME}.

\begin{defn}\label{def2}
Let  $\mu \in \mathcal{M}^{+} (\RN)$.   We say that $u\geq0 $ is a weak solution of Problem \eqref{model1} with initial data $\mu$ if: \\ (i) $u \in L^1_{\textup{loc}}(\RN \times (0,T))$ , (ii) $\nabla (-\Delta)^{-s}u \in L^1_{\textup{loc}}(\RN \times (0,T))$, (iii) $u^{m-1} \nabla (-\Delta)^{-s} u\in L^1_{\textup{loc}}(\RN \times (0,T))$,
\begin{equation*}
\int_0^T\int_{\RN} u \phi_t\,dxdt-\int_0^T\int_{\RN}  u^{m-1} \nabla (-\Delta)^{-s} u \cdot\nabla \phi \,dxdt+  \int_{\RN} \phi(x,0) d \mu(x)=0.
\end{equation*}
for all test functions  $\phi \in C^1_c(\RN \times [0,T))$.

\end{defn}

\begin{theorem}\label{ThmExistL1}
Let  $1<m<\infty$, $N\ge 1$ and     $\mu \in \mathcal{M}^{+} (\RN)$.   Then there exists a weak solution $u\geq0$ (in the sense of Definition \ref{def2}) of Problem \eqref{model1} such that the smoothing effect \eqref{smoothform} holds for $p=1$ in the following sense:
\[
\| u(\cdot,t)\|_{L^{\infty}(\RN)} \le C_{N,s,m} \, t^{-\gamma} \mu(\R^N)^{\delta}   \quad \textup{for all} \quad t>0,
\]
where
$\gamma=\frac{N}{(m-1)N+2(1-s)}$, $\delta=\frac{2(1-s)}{(m-1)N+2(1-s)}$. Moreover,
$$u \in   L^\infty((0,\infty):L^1(\RN)) \cap L^\infty (\RN \times (\tau, \infty))    \quad \textup{for all}  \quad \tau >0$$
and it has the following properties
\begin{enumerate}
\item \textbf{(Conservation of mass)} For all $0 < t < T$ we have
$\displaystyle{
\int_{\RN}u(x,t)dx= \int_{\RN}d\mu(x).
}$

\item \textbf{($L^p$ energy estimate)} For all $1<p<\infty$ and $0 <\tau< t < T$ we have
\begin{equation*} \intr u^p(x,t)dx + \frac{4p(p-1)}{(m+p-1)^2}\int_\tau^t \int_{\RN}\left|(-\Delta)^{\frac{1-s}{2}} u^{\frac{m+p-1}{2}}\right|^2 dxdt \le \intr u^p(x,\tau)dx .
\end{equation*}

\item \textbf{(Second energy estimate)} For all $0 < \tau<t < T$  we have
\begin{equation*}
  \frac{1}{2} \intr \left|(-\Delta)^{-\frac{s}{2}} u(t)\right|^2 dx  +
\int_\tau^t \intr  u^{m-1} \left| \nabla  (-\Delta)^{-s} u(t)\right|^2 dx  dt \le \frac{1}{2} \intr \left|(-\Delta)^{-\frac{s}{2}} u(\tau)\right|^2 dx.
\end{equation*}

\end{enumerate}

\end{theorem}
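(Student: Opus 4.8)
I would construct $u$ as a limit of the solutions provided by Theorem~\ref{Thm1PMFP} for a sequence of bounded, integrable approximations of $\mu$, and use the smoothing effect of Theorem~\ref{ThmSmoothing} (applied with $p=1$) to transfer all the a~priori estimates to positive times. Concretely, pick $u_{0,n}:=\mu*\rho_{1/n}\in L^1(\RN)\cap L^\infty(\RN)$, $u_{0,n}\ge 0$, with $\|u_{0,n}\|_{L^1(\RN)}=\mu(\RN)$ and $u_{0,n}\rightharpoonup\mu$ in the weak-$*$ sense of measures. Let $u_n$ be the weak solution of \eqref{model1} with datum $u_{0,n}$ given by Theorem~\ref{Thm1PMFP}, together with mass conservation, the $L^\infty$ bound, the $L^p$ energy estimate \eqref{Lpdecay} and the second energy estimate \eqref{SecondEnergy}. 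For $N=1$, $s\in[\tfrac12,1)$ one replaces $\nabla(-\Delta)^{-s}$ by $\nabla^{1-2s}$ throughout and argues exactly as in Section~\ref{SectN1}.

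\textbf{Uniform estimates for $t\ge\tau$ and passage to the limit on $(\tau,T)$.} By the smoothing effect \eqref{smoothform} with $p=1$ we get, for every $\tau>0$, that $\|u_n(\cdot,t)\|_{L^\infty(\RN)}\le C_{N,s,m}\,\tau^{-\gamma}\mu(\RN)^{\delta}$ for all $t\ge\tau$, uniformly in $n$, while $\|u_n(\cdot,t)\|_{L^1(\RN)}=\mu(\RN)$. Hence $u_n(\cdot,\tau)$ is bounded in $L^1(\RN)\cap L^\infty(\RN)$, and therefore $(-\Delta)^{-s/2}u_n(\cdot,\tau)$ is bounded in $L^2(\RN)$ by Hardy--Littlewood--Sobolev (using $N>2s$). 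Since $u_n$ restricted to $[\tau,T)$ is again a weak solution of \eqref{model1} with the admissible datum $u_n(\cdot,\tau)$, the estimates \eqref{Lpdecay}--\eqref{SecondEnergy} started from the level $t=\tau$ give uniform-in-$n$ bounds on $(\tau,T)$ for $\int|(-\Delta)^{(1-s)/2}u_n^{(m+p-1)/2}|^2$ and for $\int u_n^{m-1}|\nabla(-\Delta)^{-s}u_n|^2$. With these bounds the compactness argument of Section~\ref{SectExist} (Lemma~\ref{ConvSimon1}, resp.\ Theorem~\ref{RakotosonTemam3} applied in each ball $B_\rho$, splitting $W_\delta=u_n$ or $u_n^m$ according to $m\le2$ or $m>2$, together with the covering-plus-diagonal procedure) yields, up to a subsequence, $u_n\to u$ in $L^2_{\mathrm{loc}}(\RN\times(\tau,T))$ and a.e., and $u_n^{m-1}\nabla(-\Delta)^{-s}u_n\rightharpoonup u^{m-1}\nabla(-\Delta)^{-s}u$, exactly as in Subsection~\ref{Subsec:delta}. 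A further diagonal extraction over $\tau=\tau_k\downarrow0$ produces a single limit $u$ on $\RN\times(0,T)$ which satisfies the interior weak formulation and, by lower semicontinuity and Fatou, the $L^p$ and second energy estimates from any $\tau>0$; in particular $u\in L^\infty((\tau,\infty):L^1)\cap L^\infty(\RN\times(\tau,\infty))$ and $u\in L^\infty((0,\infty):\mathcal{M}^{+}(\RN))$, and the smoothing bound passes to the limit.

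\textbf{Initial trace and conservation of mass.} It remains to show that $u$ attains $\mu$ in the sense of Definition~\ref{def2}; this is the crux of the proof. I would pass to the limit in the \emph{full} weak formulation of $u_n$,
\[
\int_0^T\!\!\int_{\RN} u_n\phi_t\,dxdt-\int_0^T\!\!\int_{\RN} u_n^{m-1}\nabla(-\Delta)^{-s}u_n\cdot\nabla\phi\,dxdt+\int_{\RN}u_{0,n}\phi(x,0)\,dx=0 .
\]
The last term converges to $\int_{\RN}\phi(x,0)\,d\mu$ by weak-$*$ convergence of $u_{0,n}$; the first one converges to $\int_0^T\!\int u\phi_t$ by dominated convergence, since $\|u_n(t)\|_{L^1}\le\mu(\RN)$ dominates $u_n|\phi_t|$ uniformly down to $t=0$ and $u_n\to u$ a.e.\ on $\RN\times(0,T)$. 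For the nonlinear term I split $\int_0^T=\int_0^\tau+\int_\tau^T$; the part over $(\tau,T)$ converges by the previous step, and the key estimate is that, since $\nabla\phi$ is supported in some $B_\rho$, a dyadic decomposition of $(0,\tau)$ together with Cauchy--Schwarz, the elementary bound $\int_{B_\rho}u_n^{m-1}(t)\,dx\le C_{\rho}\bigl(1+t^{-\gamma(m-2)_+}\bigr)$ (from the $L^1$--$L^\infty$ bounds), and the second-energy bound applied on each dyadic layer $[2^{-k-1}\tau,2^{-k}\tau]$ started from its left endpoint (where $\|(-\Delta)^{-s/2}u_n\|_{L^2}^2$ is controlled, via Hardy--Littlewood--Sobolev, by $\|u_n(\cdot,2^{-k-1}\tau)\|_{L^1\cap L^\infty}$ and hence by a power of $2^{-k}\tau$), give
\[
\int_0^\tau\!\!\int_{B_\rho}u_n^{m-1}\,|\nabla(-\Delta)^{-s}u_n|\,|\nabla\phi|\,dxdt\le C_{\rho,\phi}\,\tau^{\alpha}
\]
for some exponent $\alpha>0$ depending on the precise form of $\gamma,\delta$, uniformly in $n$. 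Letting first $n\to\infty$ and then $\tau\to0$ identifies the limit equation with the one in Definition~\ref{def2}. Conservation of mass then follows by testing with cutoffs $\zeta_R$ equal to $1$ on $B_R$ and vanishing outside $B_{2R}$: the flux integral against $\nabla\zeta_R$ tends to $0$ as $R\to\infty$ (using $\|\nabla\zeta_R\|_\infty\le C/R$, the second energy estimate on $(\tau,T)$, and the decay of the mass carried in the annulus $B_{2R}\setminus B_R$), so $\int_{\RN}u(t)\,dx=\int_{\RN}u(\tau)\,dx$ for $0<\tau<t$, and letting $\tau\to0$ with the tightness of $\{u_n(\cdot,\tau)\}$ gives the value $\mu(\RN)$.

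\textbf{Main obstacle.} The delicate part is precisely the identification of the initial trace: for a general finite measure one has $(-\Delta)^{-s/2}\mu\notin L^2(\RN)$, so the second energy estimate \eqref{SecondEnergy} degenerates as the starting time tends to $0$ and the contribution of the flux on a time-layer near $t=0$ cannot be bounded by $\|(-\Delta)^{-s/2}u_{0,n}\|_{L^2}^2$. One must instead use the smoothing effect to absorb the singularity layer by layer, verify that the resulting exponent $\alpha$ is strictly positive in the whole range $m>1$, $0<s<1$, and control the no-mass-loss (tightness) in the limit $n\to\infty$ uniformly for small $\tau$. All the remaining limit passages are, by now, a routine rerun of the compactness arguments of Section~\ref{SectExist}.
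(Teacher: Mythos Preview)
Your proposal is correct and follows essentially the same route as the paper: mollify $\mu$, invoke Theorem~\ref{Thm1PMFP} and the smoothing effect for uniform bounds on $[\tau,T)$, pass to the limit away from $t=0$ via the compactness machinery of Section~\ref{SectExist}, and handle the initial trace by a dyadic layer decomposition of $(0,\tau)$ combined with Cauchy--Schwarz, the second energy estimate restarted at each dyadic time, and the Hardy--Littlewood--Sobolev control of $\|(-\Delta)^{-s/2}u_n(t)\|_{L^2}$. The paper carries out exactly this computation and obtains the explicit exponent $\alpha=\dfrac{1}{(m-1)N+2-2s}>0$, confirming the positivity you flagged as the main point to verify; mass conservation is likewise obtained by testing with cutoffs $\phi_R$ and using the same $\Lambda(\tau)/R$ bound.
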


\begin{remark}
{\rm If $\mu$ is an absolutely continuous with respect to the Lebesgue measure, it has a density $u_0\in L^1(\R^N)$ such that $d\mu(x)=u_0(x)dx$. In this case $u_0$ is an initial condition in the sense given in Definition \ref{def1}.}
\end{remark}

\begin{proof}
\noindent \textbf{I. Approximation with bounded solutions.}  Let $\{\rho_n\}_{n>0}$ be a sequence  of standard mollifiers. We define the approximate initial data by convolution, i.e.,  for any $n >0$ we consider the function $(u_0)_n \in L^1(\R^N)\cap L^\infty(\R^N)$ defined by
$$
(u_0)_n(x):=\int_{\R^{N}}\rho_n(x-z)d \mu(z).
$$
Note that, by Fubini's Theorem, we have that
\[
\|(u_0)_n\|_{L^1(\R^N)}=\int_{\R^{N}}d \mu(z)=\mu(\R^N).
\]
It is clear that $(u_0)_n\to \mu$ as $n\to \infty$ in the sense required by Definition \ref{def2}, that is,
\begin{equation}\label{weakconvmu}
\int_{\R^N}(u_0)_n(x)\psi(x)d x\to\int_{\R^N}\psi(x)d \mu(x) \quad \textup{as} \quad n \to \infty,
\end{equation}
for all $\psi \in C_c^1(\R^N)$. Now let $u_n\in  L^1(\R^N)\cap L^\infty(\R^N)$ be the solution of Problem \eqref{model1} with initial data $(u_0)_n$ provided by Theorem \ref{Thm1PMFP}. Moreover, thanks to the $L^1$-$L^\infty$ smoothing effect given by Theorem \ref{ThmSmoothing} we have the following estimates that are independent of $n$:

i) For all $0<t<T$ we have $\|u_n(\cdot,t)\|_{L^1(\R^N)}= \|(u_0)_n\|_{L^1(\R^N)} =\mu(\R^N)$.

ii) For all $0<\tau<t\leq T$ we have \[
\|u_n(\cdot,t)\|_{L^\infty(\R^N)}\leq \|u_n(\cdot,\tau)\|_{L^\infty(\R^N)} \leq C_{N,s,m} \, \tau^{-\gamma}\|(u_0)_n\|_{L^1(\RN)}^{\delta}=C_{N,s,m} \, \tau^{-\gamma}\mu(\R^N)^{\delta},\]
where
$\gamma=\frac{N}{(m-1)N+2(1-s)}$, $\delta=\frac{2(1-s)}{(m-1)N+2(1-s)}$.

Furthermore, since i) and ii) show that $u_n\in L^\infty(\R^N \times (\tau, T)) \cap L^1(\R^N \times (0, T)) $ uniformly in $n$, we have the following energy estimates for which the right hand side are absolutely bounded in $n$ (the precise bounds will be given later):

iii) For all $1<p<\infty$ and $0<\tau<t\leq T$,
$$ \intr u_n^p(x,t)dx + \frac{4p(p-1)}{(m+p-1)^2}\int_\tau^t \int_{\RN}\left|(-\Delta)^{\frac{1-s}{2}} u_n^{\frac{m+p-1}{2}}\right|^2 dxdt \le \intr u_n^p(x,\tau)dx .
$$

iv) For all  $0<\tau<t\leq T$,
\begin{equation*}
\begin{split}
 \frac{1}{2} \int_{\RN} \left|(-\Delta)^{-\frac{s}{2}} u_n(t)\right|^2 & dx
+\int_\tau^t \int_{\RN}  u_n^{m-1} \left| \nabla  (-\Delta)^{-s} u_n(t)\right|^2 dx  dt    \le \frac{1}{2} \int_{\RN} \left|(-\Delta)^{-\frac{s}{2}} u_n(\tau)\right|^2 dx. \end{split}
\end{equation*}

\noindent \textbf{II. Convergence away from $t=0$.}  Given any $\tau>0$ we can use the compactness criteria given by Theorem \ref{RakotosonTemam3} as in Section \ref{subsec:compacttemam} to show that
\begin{equation}\label{convntau}
u_n\longrightarrow u^\tau \quad \text{as}\quad  n \to \infty \quad  \text{in} \quad L^2_{\text{loc}}(\R^N\times(\tau,T)).
\end{equation}
In the weak formulation, for any $\phi \in C_c^\infty(\R^N\times[0,T))$, $u_n$ satisfies:
$$
\int_\tau^T\int_{\R^N} u_n\phi_tdxdt - \int_\tau^T\int_{\RN} u_n^{m-1} \nabla (-\Delta)^{-s} u_n  \nabla \phi dx dt  + \int_{\R^N} u_n(\tau) \phi(x,\tau)dx=0.
$$
Moreover, we can proceed as in Section \ref{subsubsec:limitissol} to prove that for any test function $\phi$ we have
\[
\int_\tau^T\int_{\R^N} u_n\phi_tdxdt\to\int_\tau^T\int_{\R^N} u^\tau\phi_tdxdt \quad \text{as} \quad \delta \to 0.
\]
and
\[
\int_\tau^T\int_{\RN} u_n^{m-1} \nabla (-\Delta)^{-s} u_n   \nabla \phi \,dx dt \to \int_\tau^T\int_{\RN} (u^\tau)^{m-1} \nabla (-\Delta)^{-s}  u^\tau  \nabla \,\phi dxdt.
\]

\noindent \textbf{III. Uniform estimates at $t=0$.}  In order to show that we can pass to the limit as $\tau \to 0 $ to obtain a weak solution of Problem \eqref{model1} we need to prove that the remaining terms converge to zero as $\tau \to0$. First of all,
\[
\left|\int_0^\tau\int_{\R^N} u_n\phi_t\,dxdt\right| \leq C \int_0^\tau \|u_n(\cdot,t)\|_{L^1(\R^N)}dt =C\tau \mu(\R^N).
\]
Now we use the classical Riesz embedding (c.f \cite{Stein70}) and that $u_n(\cdot,t) \in L^1(\R^N)\cap L^\infty(\R^N)$ for any $t>0$ to get
\[
\int_{\R^N} \left|(-\Delta)^{-\frac{s}{2}}u_n(t)\right|^2dx \leq C\|u_n(t)\|_{p}^2 \quad \text{with} \quad \frac{1}{2}=\frac{1}{p}- \frac{s}{N}.
\]
Also, from the smoothing effect, we have
\[
\|u_n(t)\|_p^p\leq \|u_n(t)\|_1 \|u_n(t)\|_\infty^{p-1} \leq C \mu(\R^N)^{1+(p-1)\delta} t^{-\gamma(p-1)}.
\]
In this way, we get
\[
\int_{\R^N} \left|(-\Delta)^{-\frac{s}{2}}u_n(t)\right|^2dx \leq C \mu(\R^N)^\sigma t^{-\lambda},
\]
for some $\sigma>0$ and
\[
\lambda=\frac{2\gamma(p-1)}{p}=\frac{2N}{(m-1)N+2-2s}\frac{N-2s}{2N}=\frac{N-2s}{(m-1)N+2-2s}.
\]
Consider the strip $Q_k=\R^N\times(t_k,t_{k-1})$ with $t_k=2^{-k}$. Then
\begin{equation*}
\begin{split}
\int\int_{Q_k} u_n^{m-1} |\nabla (-\Delta)^{-s} u_n|dx dt &\leq \left(\int\int_{Q_k} u_n^{m-1}dxdt\right)^{1/2} \left(\int\int_{Q_k}u_n^{m-1}  |\nabla (-\Delta)^{-s} u_n|^2 dxdt\right)^{1/2}\\
&\leq \|u_n(t)\|_{L^{\infty}(Q_k)}^{\frac{m-2}{2}}\left( \mu(\R^N) t_{k}\right)^{1/2} \left( \frac{1}{2} \int_{\RN} \left|(-\Delta)^{-\frac{s}{2}} u_n(t_k)\right|^2 dx\right)^{1/2}\\
&\leq  C \mu(\R^N)^{\hat{\sigma}} t_k^{-\gamma \frac{m-2}{2}} t_k^{\frac{1}{2}} t_k^{-\frac{\lambda}{2}}\\
&= C \mu(\R^N)^{\hat{\sigma}} t_k^\alpha,
\end{split}
\end{equation*}
for some $\hat{\sigma}>0$ and
\begin{equation*}
\begin{split}
\alpha&=\frac{1}{2}\left(1-\gamma (m-2)- \frac{N-2s}{(m-1)N+2-2s}\right)=\frac{1}{(m-1)N+2-2s}>0.
\end{split}
\end{equation*}
In this way,
\begin{equation}\label{lastestim}
\begin{split}
\left|\int_0^\tau\int_{\RN} u_n^{m-1} \nabla (-\Delta)^{-s} u_n  \nabla \phi dx dt \right|\leq\|\nabla \phi\|_{\infty}\int_0^\tau\int_{\RN} u_n^{m-1} |\nabla (-\Delta)^{-s} u_n|dx dt\leq \Lambda (\tau)
\end{split}
\end{equation}
for some modulus of continuity $\Lambda$.

\noindent \textbf{IV. Initial data.}  The only thing left is to prove that the initial data is taken. Let $\phi$ be a $C_c^1(\R^N)$ test function. Then, using the estimate given by \eqref{lastestim}, we get
\begin{equation}\label{lastestim2}
\begin{split}
 \left|\int_{\R^N}(u_n(\tau)-(u_0)_n)\phi dx \right|=  \left|\int_0^\tau\int_{\R^N}\partial_t u_n\phi \,dx dt\right|=\left|\int_0^\tau\int_{\RN} u_n^{m-1} \nabla (-\Delta)^{-s} u_n  \nabla \phi \,dx dt \right|\leq \Lambda(\tau).
\end{split}
\end{equation}
A standard diagonal procedure in $n$ and $\tau$ concludes the proof.

\noindent \textbf{V. Conservation of mass.} We can also conclude conservation of mass by taking a sequence of test functions of the cutoff type, $\phi_R(x)=\phi(x/R)$ with $0\le \phi\le 1$ and $\phi_1(x)=1$ for $|x|\le 1$ and such that $\|\nabla \phi_R\|_{L^\infty(\R^N)}= O(R^{-1})$ (see appendix A.2 in \cite{StanTesoVazJDE} for more details). Then, using \eqref{lastestim} and \eqref{lastestim2}, we get that for any $\tau>0$ we have
\begin{equation*}
 \left|\int_{\R^N}u_n(\tau)\phi_R dx -\int_{\R^N}(u_0)_n\phi_R dx \right|  \leq C\frac{\Lambda(\tau)}{R}.
\end{equation*}
In particular, the previous estimate implies that
\begin{equation*}
\begin{split}
\int_{\R^N}u_n(\tau)\phi_R dx&\geq \int_{\R^N}(u_0)_n\phi_R dx-C\Lambda(\tau)/R\\
&= \int_{\R^N}(u_0)_n\phi_R dx- \int_{\R^N}\phi_R(x)d\mu(x)+\int_{\R^N}\phi_R(x) d\mu(x)-C\Lambda(\tau)/R
\end{split}
\end{equation*}
In view of \eqref{convntau} and \eqref{weakconvmu} we can let $n\to\infty$ in the previous estimate to get
\begin{equation*}
\begin{split}
\int_{\R^N}u(\tau)\phi_R dx\geq\int_{\R^N}\phi_R(x) d\mu(x)-C\Lambda(\tau)/R.
\end{split}
\end{equation*}
Note that, since $\mu$ is measure with finite mass in $\R^N$, then
\[\int_{\R^N}\phi_R(x) d\mu(x)\geq \mu(\R^N)-\epsilon(R)\]
 with $\epsilon(R)\to 0$ as $R\to \infty$. Therefore,
\[
\int_{\R^N}u(\tau)\phi_R dx\geq\mu(\R^N)-\epsilon(R)-C\Lambda(\tau)/R.
\]
Letting now $R\to\infty$ we get
$$
\int_{\R^N}u(\tau)\, dx\ge   \mu(\R^N).
$$
In this way we show that no mass is lost at infinity during the evolution. The other inequality comes from the construction of solutions.

\end{proof}

\begin{remark} {\rm The proof of mass conservation  given in Theorem \ref{ThmExistL1} is strongly based  on the estimates available from the  $L^1-L^{\infty}$ smoothing effect. This is a more powerful tool than the one presented in \cite{StanTesoVazJDE} where the assumption of the boundedness on solution was unavoidable.}
\end{remark}

\section{Comments and open problems}\label{sec:comments}

\noindent  $\bullet$ \textbf{First energy estimate.} Let $u$ be the solution of Problem \eqref{model1}.  The following formal estimates can be derived for any $t>0$:
\begin{equation}\label{FirstEnergy}
   \begin{array}{ll} \displaystyle
|(2-m)(3-m)|\int_0^t  \int_{\RN} |\nabla (-\Delta)^{-\frac{s}{2}}u|^2dxdt  +\int_{\RN}u(t)^{3-m}dx \le \int_{\RN}u_0^{3-m} dx \quad &\textup{if} \quad m\not=2,3.\\[3mm]
 \displaystyle
\int_0^t  \int_{\RN} |\nabla (-\Delta)^{-\frac{s}{2}}u|^2dxdt  +\int_{\RN} \left(u(t)-\log(u(t))\right)dx\leq \int_{\RN} \left(u_0-\log(u_0)\right)dx\quad &\textup{if} \quad m=3.\\[3mm]
 \displaystyle
\int_0^t  \int_{\RN} |\nabla (-\Delta)^{-\frac{s}{2}}u|^2dxdt  +\int_{\RN} u(t)\log(u(t))dx\leq \int_{\RN} u_0\log(u_0)dx\quad &\textup{if} \quad m=2.
    \end{array}
\end{equation}

This kind of energy estimates were a key tool to prove existence in the previous paper \cite{StanTesoVazJDE}. When $m\in (1,2)$, they only require $u_0 \in L^1(\RN) \cap L^{\infty}(\RN)$ in order to have uniform bounds on the $L^2(\R^N\times (0,T))$ norm of $\nabla (-\Delta)^{-\frac{s}{2}}u$. When $m\in [2,3)$ they are still being   useful energy estimates, but an additional decay has to be imposed to $u_0$.  In \cite{StanTesoVazJDE} we proved that if $u_0$ decays exponentially for large $|x|$, then $u(t)$ has a similar decay and  \eqref{FirstEnergy} gives  us meaningful  information.
For $m\ge 3$, \eqref{FirstEnergy} is not valid anymore with a decay property. This has motivated us to use a different approximation technique in the present paper which satisfies a different energy estimate \eqref{Lpdecay} without any additional conditions to be imposed on the initial data.

\noindent $\bullet$ The $L^p$-energy estimate \eqref{Lpdecay} can be proved for a general nonlinearity $\varphi(u)$:
$$
\intr \varphi(u)(x,t)dx + \int_0^t \int_{\RN}\left|(-\Delta)^{\frac{1-s}{2}} \psi(u) \right|^2 dxdt \le \intr \varphi(u_0)(x)dx ,
$$
where $(\psi')^2(a)=\varphi''(a) a^{m-1}$. This kind of energy estimate is used in \cite{BilerImbertKarch} and in \cite{dTEnJa16b}.

\noindent $\bullet$ \textbf{ More general equations and estimates.} The techniques employed in this paper can be used to prove  existence results for more general equations of the form
\begin{equation}\label{GenEq}
u_{t}(x,t) = \nabla \cdot (G'(u) \nabla (-\Delta)^{-s}u),
\end{equation}
where $G:[0,+\infty)\to[0,+\infty) $ has at most linear growth at the origin or $G'>0$.
The general Stroock-Varopoulos Inequality \eqref{StroockVar2} allows us to obtain an energy inequality also in this case:
$$
\intr \varphi(u)(x,t)dx +  \int_0^t \int_{\RN}\left|(-\Delta)^{\frac{1-s}{2}} \psi(u) \right|^2 dxdt \le \intr \varphi(u_0)(x)dx ,
$$
where $(\psi')^2(a)=\varphi''(a) G'(a)$.  We give a few examples below.

a) For instance we consider $G(u)=\frac{1}{m}(u+1)^{m}$, then $G'(u)=(u+1)^{m-1}$ and the model is
\begin{equation}\label{NonDegDiff}
u_{t}(x,t) = \nabla \cdot \left((u+1)^{m-1} \nabla (-\Delta)^{-s}u \right).
\end{equation}
 This corresponds to the approximating problem \eqref{ProblemMuDelta} without viscosity $\mu=1$, $\delta=0$.
There is positive velocity and the solutions seem to have infinite speed of propagation. See Figure \ref{FigNonDegDiff} for the particular case $m=2$.

b) Let $G(u)=\log(1+u)$, then $G'(u)=\frac{1}{1+u}$ and the model is
\begin{equation}\label{LogDiff}
u_{t}(x,t) = \nabla \cdot \left(\frac{1}{1+u} \nabla (-\Delta)^{-s}u \right).
\end{equation}
We provide a numerical simulation in Figure \ref{FigLogDiff}.
This may correspond to $m\to 0$, $m>0$.
This nonlinearity has been considered for the Fractional Porous Medium Equation $u_t + (-\Delta)^s \log(1+u)=0$ in \cite{PQRV3}.

\begin{figure}[ht!]
\centering
\begin{subfigure}{.49\textwidth}
  \centering
\includegraphics[width=\textwidth]{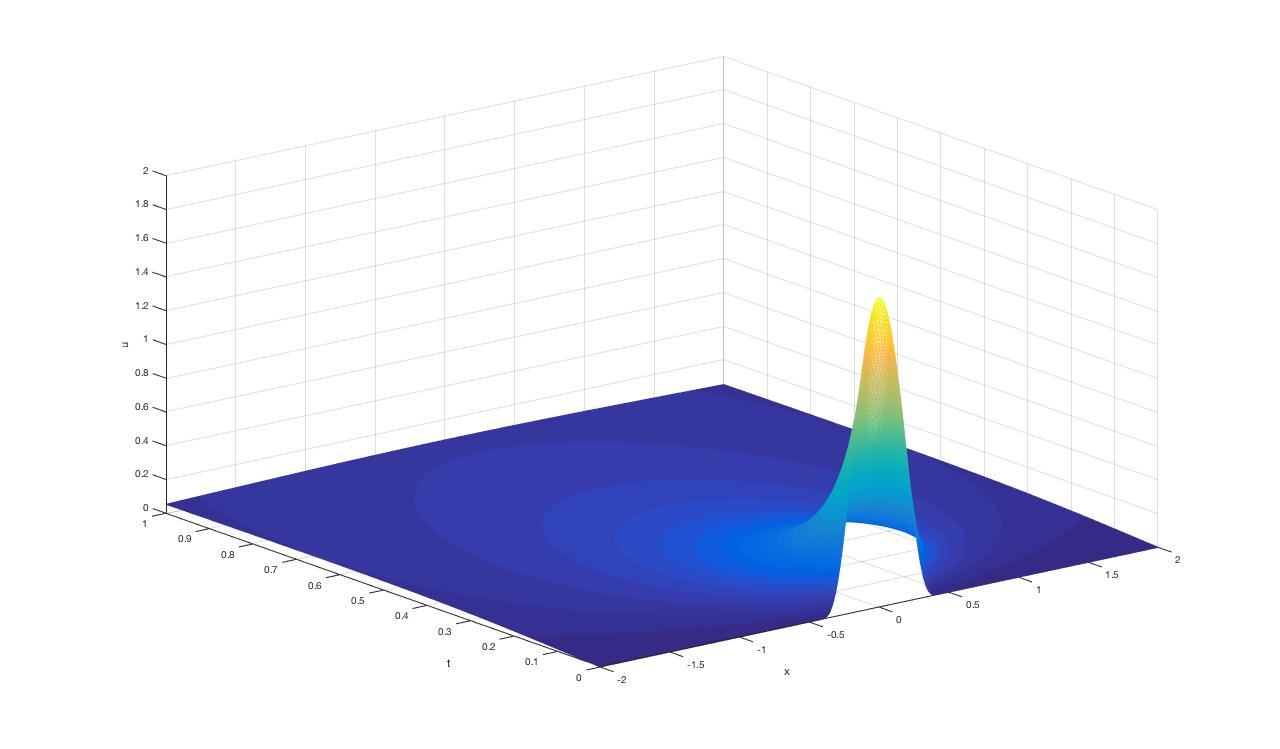}
\caption{Solution for model \eqref{NonDegDiff} with $s=0.5$}\label{FigNonDegDiff}
\end{subfigure}
\begin{subfigure}{.49\textwidth}
  \centering
  \includegraphics[width=\textwidth]{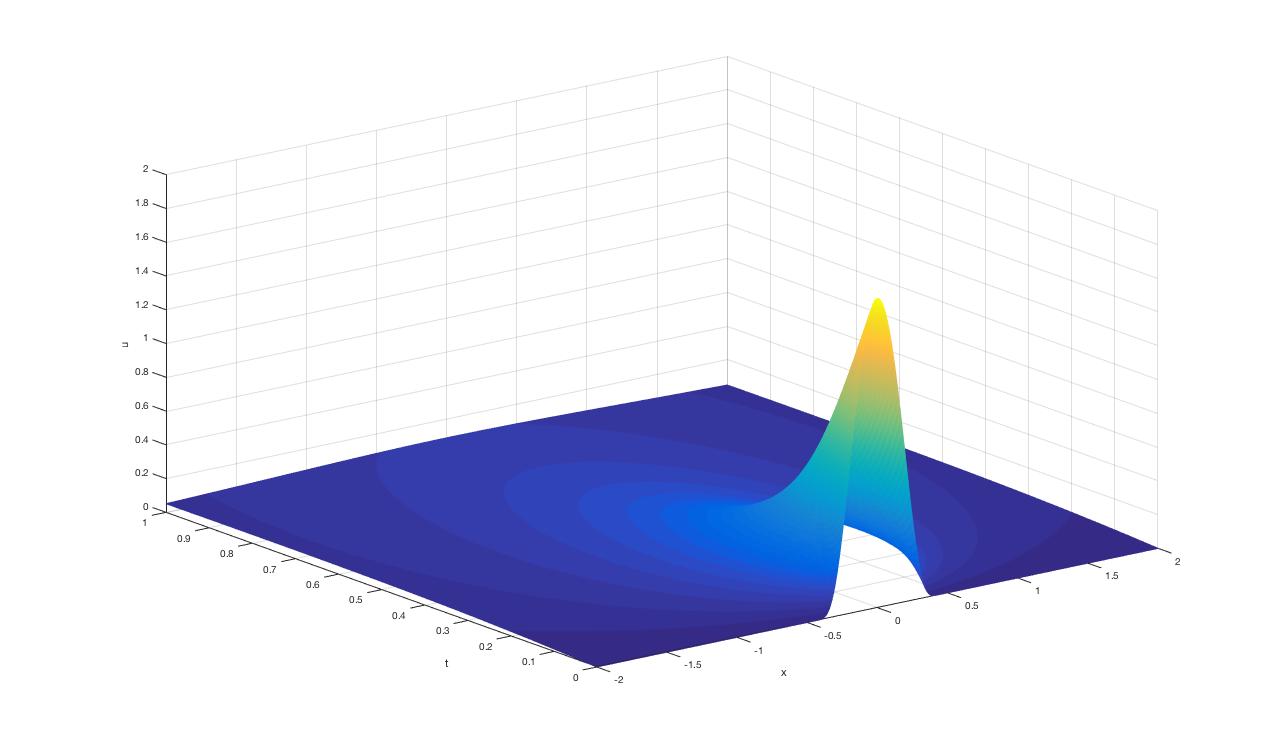}
\caption{Solution for model \eqref{LogDiff} with $s=0.5$}\label{FigLogDiff}
\end{subfigure} \caption{More general equations of type \eqref{GenEq}}   \label{figura2}
\end{figure}

\noindent $\bullet$ \textbf{Finite/infinite speed of propagation depending on the nonlinearity.} In \cite{StanTesoVazJDE}   some preliminary results have  been obtained concerning  the positivity properties of the solution of Problem \eqref{model1}. Jointly with the existence theory developed in the present work for all $1<m<\infty$ we have the following results so far:

a) Let $N\ge 1$, $m \in [2,+\infty)$, $s\in (0,1)$  and let $u$ be a constructed weak solution to Problem \eqref{model1} with compactly supported initial data $u_0 \in L^1(\RN)\cap L^\infty(\RN)$.  Then,  $u(\cdot,t)$ is also compactly supported for any $t>0$, i.e. the solution has finite speed of propagation.
This causes the appearance of free boundaries.

b) Let $N=1$, $m\in (1,2)$, $s\in (0,1)$. Then for any $t>0$ and any $R>0$, the set $\mathcal{M}_{R,t}=\{x: |x|\ge R,\  u(x,t)>0\}$ has positive measure even if $u_0$ is compactly supported. This is a  weak form of  infinite speed of propagation. If moreover $u_0$ is radially symmetric and monotone non-increasing in $|x|$, then we get a clearer result:  $u(x,t)>0$ for all $x\in \R$ and $t>0$.


\noindent $\bullet$ \textbf{The effect of the nonlocal operator on the diffusion.} The parameter $s \in (0,1)$ plays a crucial role in the the diffusion effects.

a) In the limit $s\to 1$, we get $u_t=\nabla\cdot (u^{m-1}\nabla (-\Delta)^{-1}u)$, which is no more a diffusion equation. This is an interesting problem to be further investigated. When $m=2$, it has been  proved in  \cite{SerfVaz} that  the model gives in the limit $s\to 1$ a ''mean field'' equation arising in superconductivity and superfluidity. For this equation, the authors obtain uniqueness in the class of bounded solutions, universal bounds and regularity results. To note that H\"older regularity is no more true for the standard  class of bounded integrable solutions.

b) When $s\to 0$ we get  $u_t=\nabla\cdot (u^{m-1} \nabla u)$ which is the classical Porous Medium Equation $u_t=\frac{1}{m}\Delta u^m$ with $m>1$. It is known that solutions propagate with finite speed and have $C^\alpha$ regularity.

 Such limit processes have not been justified with analytical rigor  for $m\ne 2$. We provide some numerical simulations which confirm the behaviour of solutions for different values of $m$ and $s$ (see \cite{delTesoCalcolo,delTesoVaz}).
Figures \ref{m15s025}, \ref{m15s05}, \ref{m15s075} indicate the effect of diffusion in the infinite speed of propagation case.
Figures \ref{m2s025}, \ref{m2s05}, \ref{m2s075} indicate the effect of diffusion in the finite speed of propagation case. Note that the larger the $s$, the slower is the  diffusion velocity.


\begin{figure}[ht!]
\centering
\begin{subfigure}{.5\textwidth}
  \centering
\includegraphics[width=\textwidth]{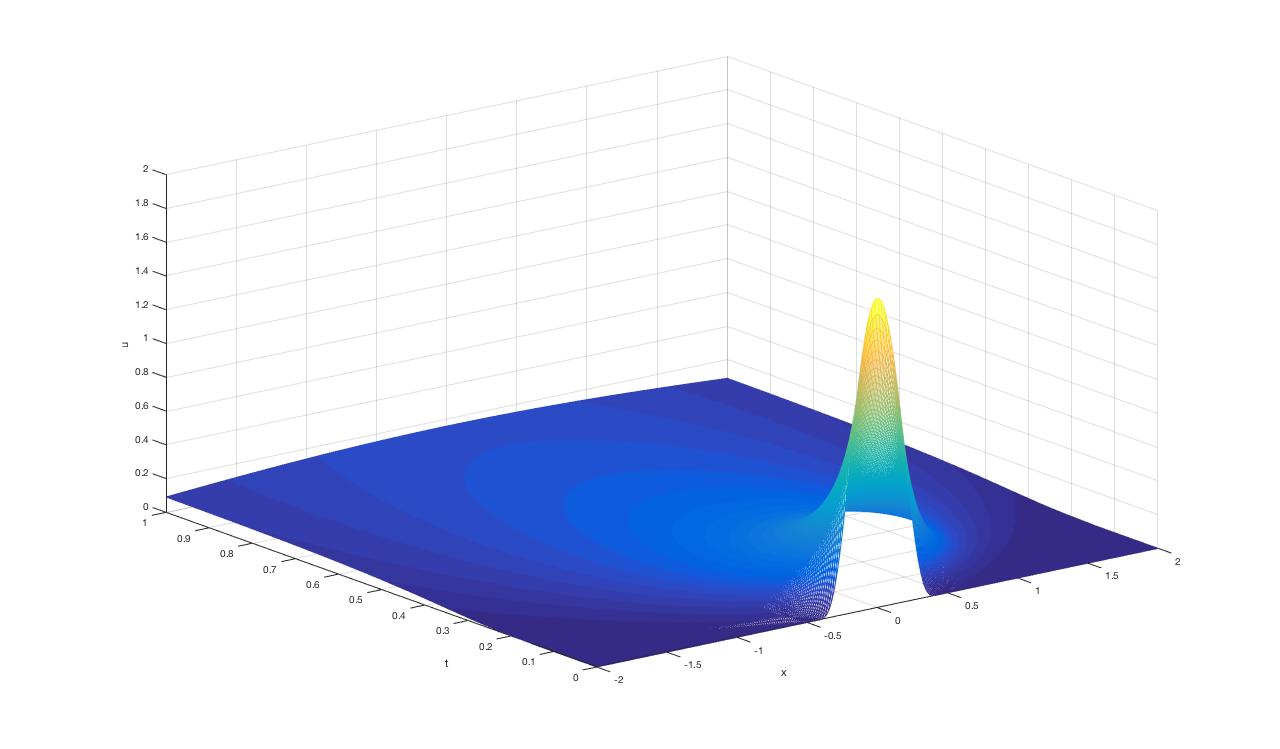}\caption{$m=1.5$, $s=0.25$}\label{m15s025}
\end{subfigure}%
\begin{subfigure}{.5\textwidth}
  \centering
  \includegraphics[width=\textwidth]{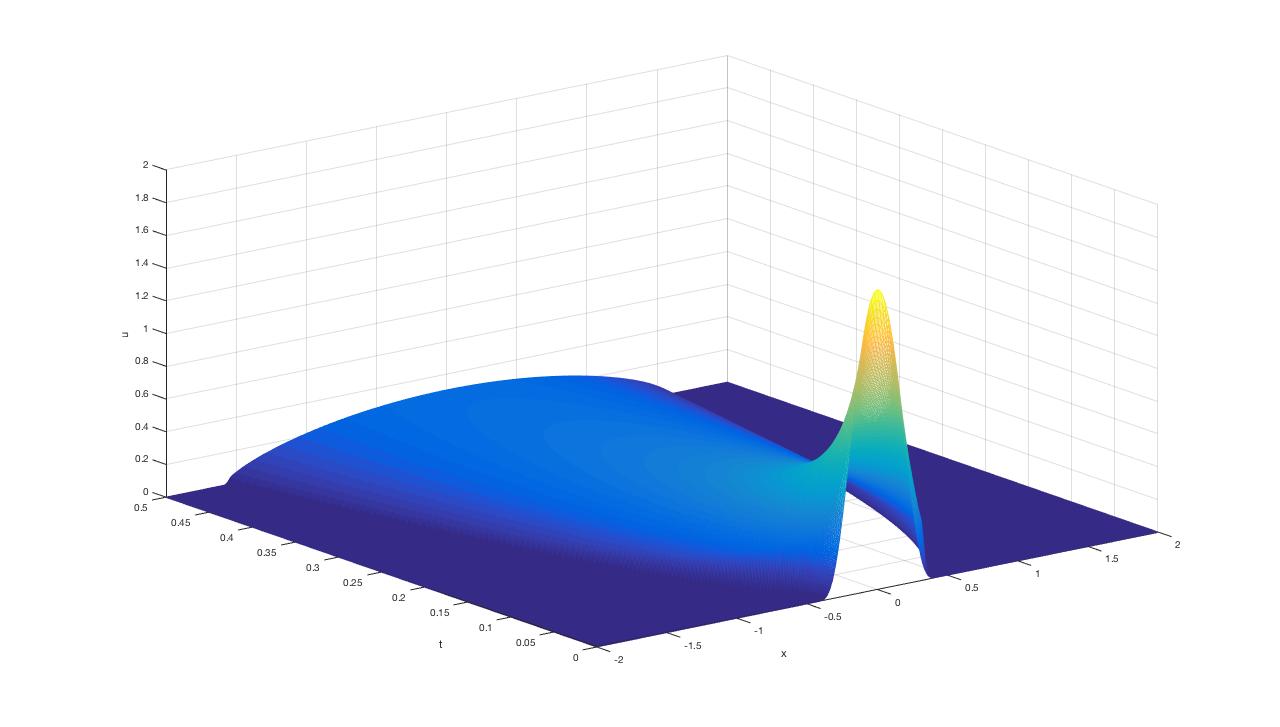}\caption{$m=2$, $s=0.25$}\label{m2s025}
\end{subfigure}

\begin{subfigure}{.5\textwidth}
  \centering
\includegraphics[width=\textwidth]{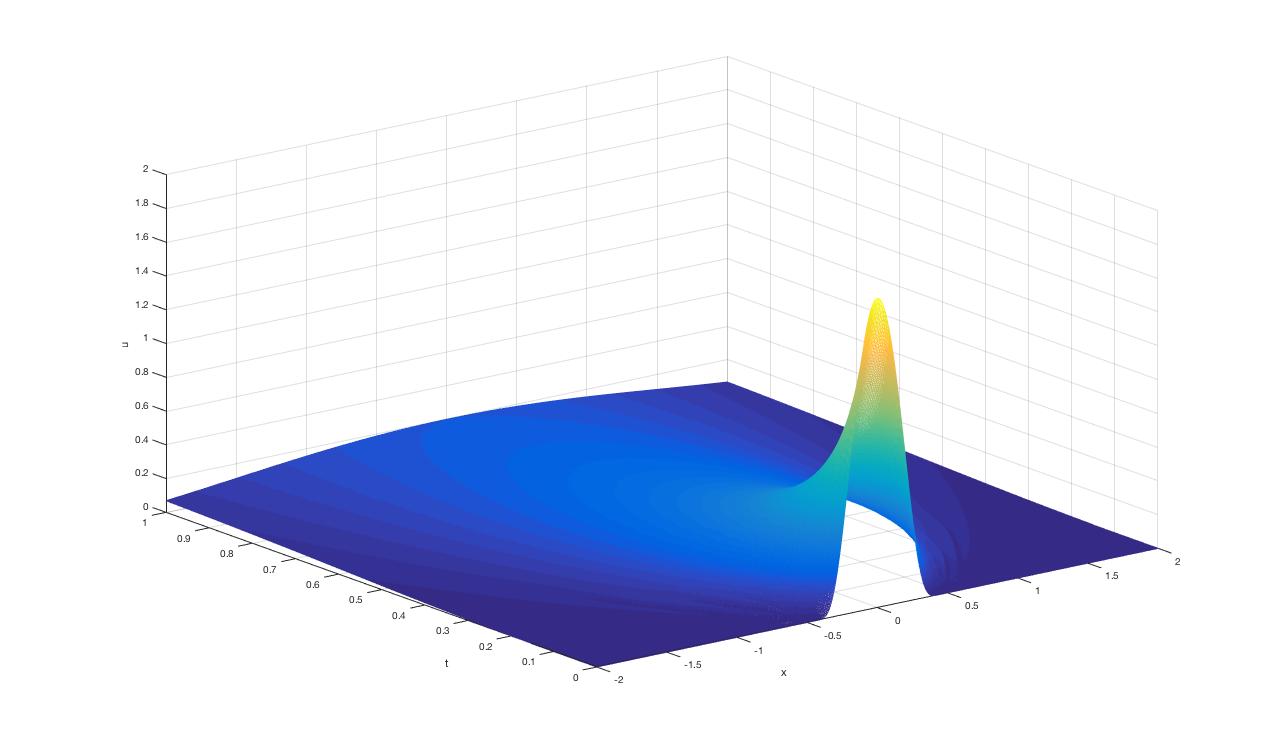}\caption{$m=1.5$, $s=0.5$}\label{m15s05}

\end{subfigure}%
\begin{subfigure}{.5\textwidth}
  \centering
  \includegraphics[width=\textwidth]{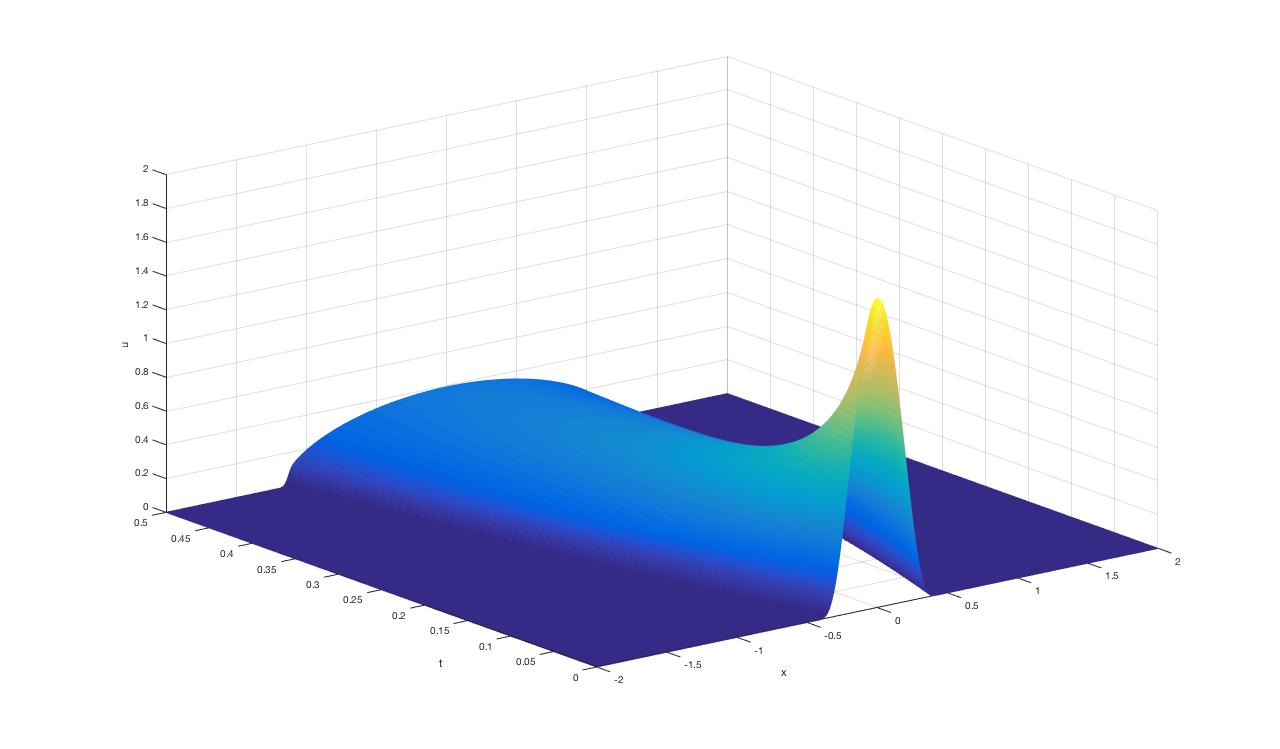}\caption{$m=2$, $s=0.5$}\label{m2s05}
\end{subfigure}

\begin{subfigure}{.5\textwidth}
  \centering
\includegraphics[width=\textwidth]{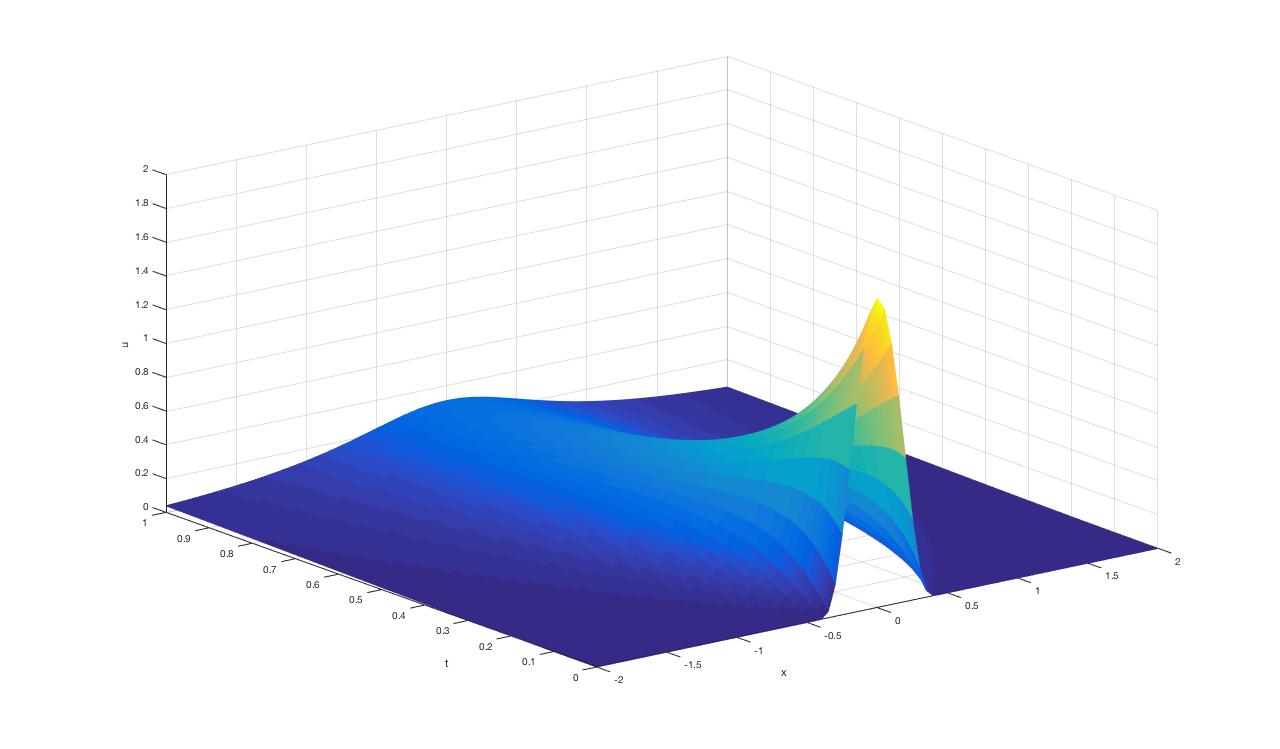}\caption{$m=1.5$, $s=0.75$}\label{m15s075}
\end{subfigure}%
\begin{subfigure}{.5\textwidth}
  \centering
  \includegraphics[width=\textwidth]{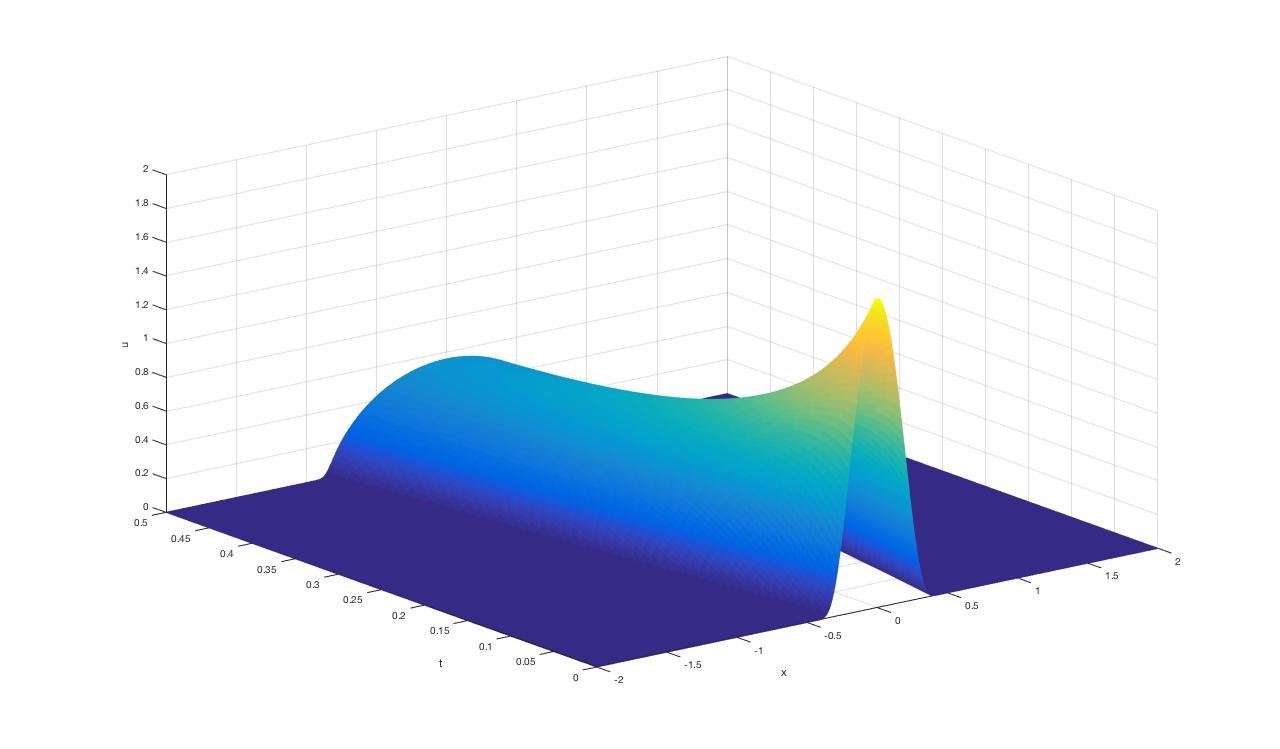}\caption{$m=2$, $s=0.75$}\label{m2s075}
\end{subfigure}
\caption{Infinite vs. finite speed of propagation for different pressures}
\label{figura1}
\end{figure}

\medskip

\noindent $\bullet$ \textbf{The question of uniqueness.}

 As mentioned in the introduction there is an open problem about uniqueness in several space dimensions. There are recent uniqueness results if the initial data are smooth, see Zhou et al. \cite{ZXC} that obtain unique local-in-time strong solutions in Besov spaces; thus, for initial data in $B^\alpha_{1,\infty} $  if $1/2 \le s < 1$ and $\alpha > N + 1$ with $N\ge  2.$ See also \cite{XZ2018}. On the other hand, Duerincks \cite{Duer2018} proves uniqueness and stability of solutions having a given regularity, based on previous work by Serfaty in the Coulomb case \cite{Serfaty}. These results need to be extended to our model.

\medskip

\noindent $\bullet$ \textbf{Other open problems.}

\noindent $-$   The problem in a bounded domain with Dirichlet or Neumann data has
not scarcely studied. See Nguyen and V\'azquez   \cite{NgVaz} for  Dirichlet data.

\noindent  $-$ We have considered only nonnegative solutions on physical grounds. But we could have also considered signed solutions after writing the equation as $u_t=\nabla\cdot (|u|^{m-1}\nabla (-\Delta)^{-s}u)$.

\noindent $-$ Good numerical studies are needed.   A rigorous study of convergent numerical schemes is developed in \cite{dTJa18} in dimension $N=1$.

\section{Appendix}\label{sec:app}

\subsection{Functional inequalities related to the fractional Laplacian}

We recall some functional inequalities related to the fractional Laplacian operator that we used throughout the paper. We refer to \cite{PQRV2} for the proofs.

\begin{lemma}[\textbf{Stroock-Varopoulos Inequality}] Let $0<s<1$, $q>1$. Then
$$
\int_{\RN}|v|^{q-2}v (-\Delta)^{s}v dx \ge \frac{4(q-1)}{q^2}\int_{\RN}\left| (-\Delta)^{s/2}|v|^{q/2}\right|^2 dx
$$
for all $v\in L^q(\RN)$ such that $(-\Delta)^{s}v \in L^q(\RN)$.
\end{lemma}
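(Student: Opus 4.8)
The plan is to follow the same scheme as the proof of the generalized Stroock--Varopoulos inequality \eqref{GenStroockVarAprox}, now with the genuine singular kernel $|x-y|^{-(N+2s)}$ in place of the regularized one, together with the classical bilinear representation of $(-\Delta)^s$. First I would recall that for $f$ with $f\in L^q(\RN)$ and $(-\Delta)^sf\in L^q(\RN)$, and for $g\in L^{q'}(\RN)$,
\[
\int_{\RN} g\,(-\Delta)^s f\,dx = \frac{C_{N,s}}{2}\int_{\RN}\int_{\RN}\frac{(g(x)-g(y))(f(x)-f(y))}{|x-y|^{N+2s}}\,dxdy, \qquad \int_{\RN} f\,(-\Delta)^s f\,dx = \int_{\RN}\left|(-\Delta)^{s/2}f\right|^2 dx .
\]
Applying the first identity with $f=v$ and $g=\psi(v)$, $\psi(z)=|z|^{q-2}z$ (which lies in $L^{q'}(\RN)$ since $\|\psi(v)\|_{q'}=\|v\|_q^{q-1}$), gives
\[
\int_{\RN}|v|^{q-2}v\,(-\Delta)^s v\,dx = \frac{C_{N,s}}{2}\int_{\RN}\int_{\RN}\frac{\bigl(\psi(v(x))-\psi(v(y))\bigr)\bigl(v(x)-v(y)\bigr)}{|x-y|^{N+2s}}\,dxdy .
\]

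Next I would invoke the pointwise inequality already established in the proof of \eqref{GenStroockVarAprox}, namely that $\psi'=(\Psi')^2$ implies $(\psi(a)-\psi(b))(a-b)\ge(\Psi(a)-\Psi(b))^2$ for all $a,b$. Here $\psi'(z)=(q-1)|z|^{q-2}$, so one takes the odd increasing function $\Psi(z)=\tfrac{2\sqrt{q-1}}{q}|z|^{q/2-1}z$, which satisfies $|\Psi(z)|=\tfrac{2\sqrt{q-1}}{q}|z|^{q/2}$. A short case check (according to whether $a,b$ have the same or opposite sign) shows
\[
\bigl|\Psi(a)-\Psi(b)\bigr|\ \ge\ \frac{2\sqrt{q-1}}{q}\,\bigl||a|^{q/2}-|b|^{q/2}\bigr| ,
\]
with equality when $a,b$ have the same sign. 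Inserting both pointwise bounds into the double integral and using the second identity above with $f=|v|^{q/2}$ yields
\[
\int_{\RN}|v|^{q-2}v\,(-\Delta)^s v\,dx \ \ge\ \frac{4(q-1)}{q^2}\,\frac{C_{N,s}}{2}\int_{\RN}\int_{\RN}\frac{\bigl(|v(x)|^{q/2}-|v(y)|^{q/2}\bigr)^2}{|x-y|^{N+2s}}\,dxdy \ =\ \frac{4(q-1)}{q^2}\int_{\RN}\left|(-\Delta)^{s/2}|v|^{q/2}\right|^2 dx ,
\]
which is the claim.

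The step I expect to be the real obstacle is the rigorous justification, under the bare hypotheses $v\in L^q(\RN)$ and $(-\Delta)^sv\in L^q(\RN)$, both of the passage from the Fourier/pairing definition of $\langle\psi(v),(-\Delta)^sv\rangle$ to the symmetric Gagliardo form, and of the finiteness of $\int\!\int(|v(x)|^{q/2}-|v(y)|^{q/2})^2|x-y|^{-(N+2s)}\,dxdy$ (equivalently $|v|^{q/2}\in\dot H^s(\RN)$); neither is purely formal. I would handle this by approximation: establish the inequality first for $v\in C_c^\infty(\RN)$, where all the manipulations are classical, and then remove the regularization by a density argument together with Fatou's lemma on the right-hand side. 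Alternatively—and this is the route most consistent with the rest of the paper—one may apply \eqref{GenStroockVarAprox} for $\mathcal{L}^s_\epsilon$ with $\psi(z)=|z|^{q-2}z$ and let $\epsilon\to 0$, using that $\mathcal{L}^s_\epsilon[v]\to(-\Delta)^sv$ and that, by \eqref{operatoronehalf} and monotone convergence of the kernels $(|z|^2+\epsilon^2)^{-(N+2s)/2}\uparrow|z|^{-(N+2s)}$, the right-hand side passes to the limit from below. Since a complete proof is available in \cite{PQRV2}, I would present only one of these two routes in full detail.
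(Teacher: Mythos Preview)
The paper does not give its own proof of this lemma: it is stated in the Appendix together with the other functional inequalities, and the authors simply write ``We refer to \cite{PQRV2} for the proofs.'' So there is nothing to compare against beyond that citation.

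Your argument is correct and is precisely the natural one: it reproduces, for the genuine singular kernel $|x-y|^{-(N+2s)}$, the same computation the paper carries out for the regularized operator $\mathcal{L}^s_\epsilon$ in the proof of \eqref{GenStroockVarAprox}. The extra step you include---the inequality $|\Psi(a)-\Psi(b)|\ge \tfrac{2\sqrt{q-1}}{q}\bigl||a|^{q/2}-|b|^{q/2}\bigr|$ to pass from the odd primitive $\Psi$ to $|v|^{q/2}$---is exactly what is needed to match the stated form of the lemma, and your case analysis is fine. Your remarks about the approximation argument (either density in $C_c^\infty$ plus Fatou, or letting $\epsilon\to0$ in \eqref{GenStroockVarAprox} via monotone convergence of the kernels) are also to the point; this is indeed the only place where some care is required under the bare hypotheses $v\in L^q$, $(-\Delta)^s v\in L^q$. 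In short, you have supplied a proof where the paper chose to cite one.
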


\begin{lemma}[\textbf{Generalized Stroock-Varopoulos Inequality}] Let $0<s<1$. Then
\begin{equation}\label{StroockVar2}
\int_{\RN}\psi(v)(-\Delta)^{s}v dx \ge \int_{\RN}\left| (-\Delta)^{s/2}\Psi(v)\right|^2 dx
\end{equation}
whenever $\psi'=(\Psi')^2$.
\end{lemma}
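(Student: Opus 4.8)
The plan is to mirror, step for step, the proof of the approximate version \eqref{GenStroockVarAprox}, simply replacing the non-singular kernel $J_\epsilon^{s}$ by the singular Riesz kernel $|x-y|^{-(N+2s)}$. First I would use the singular integral representation of $(-\Delta)^{s}$ and symmetrize in $x\leftrightarrow y$, writing
\[
\int_{\RN}\psi(v)(-\Delta)^{s}v\, dx = C_{N,s}\,\textup{P.V.}\!\int_{\RN}\!\int_{\RN}\psi(v(x))\,\frac{v(x)-v(y)}{|x-y|^{N+2s}}\,dx\,dy = \frac{C_{N,s}}{2}\int_{\RN}\!\int_{\RN}\big(\psi(v(x))-\psi(v(y))\big)\,\frac{v(x)-v(y)}{|x-y|^{N+2s}}\,dx\,dy.
\]

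Next, I would invoke the elementary pointwise inequality already established in the proof of the Generalized Stroock-Varopoulos Inequality for $\mathcal{L}^{s}_\epsilon$: the hypothesis $\psi'=(\Psi')^2$ forces $\psi'\ge 0$, and the Fundamental Theorem of Calculus together with the Cauchy-Schwarz inequality give $\big(\psi(a)-\psi(b)\big)(a-b)\ge\big(\Psi(a)-\Psi(b)\big)^2$ for all $a,b\in\R$. Applying this inside the symmetrized double integral with $a=v(x)$, $b=v(y)$, and then recognizing the result through the Gagliardo--Fourier identity $\frac{C_{N,s}}{2}\int_{\RN}\!\int_{\RN}|w(x)-w(y)|^2|x-y|^{-(N+2s)}\,dx\,dy=\|(-\Delta)^{s/2}w\|_{L^2(\RN)}^2$ applied to $w=\Psi(v)$ (the $\epsilon=0$ counterpart of \eqref{operatoronehalf}), one obtains
\[
\int_{\RN}\psi(v)(-\Delta)^{s}v\, dx\ \ge\ \frac{C_{N,s}}{2}\int_{\RN}\!\int_{\RN}\frac{\big(\Psi(v(x))-\Psi(v(y))\big)^2}{|x-y|^{N+2s}}\,dx\,dy\ =\ \int_{\RN}\left|(-\Delta)^{s/2}\Psi(v)\right|^2 dx,
\]
which is the claim.

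The main obstacle, as usual when passing from the regularized operator to the genuinely singular one, is the rigorous justification of the principal value and of the symmetrization step (Fubini), and ensuring that every double integral that appears is finite. I would handle this by first proving the inequality for $v$ smooth and rapidly decaying with $(-\Delta)^{s}v$ in the relevant $L^{q}$-space, so that the symmetrized integrand $(\psi(v(x))-\psi(v(y)))(v(x)-v(y))|x-y|^{-(N+2s)}\ge 0$ is integrable and all the manipulations above are legitimate; the general case then follows by the standard density/approximation argument, exactly as in \cite{PQRV2}. One should also flag that, unlike the special case $\psi(v)=|v|^{q-2}v$, for a general $\psi$ the statement is only meaningful under mild structural assumptions (e.g. $\psi\in C^1(\R)$ and $\Psi(v)\in\dot H^{s}(\RN)$), which guarantee the right-hand side is finite; this is precisely the setting in which the inequality is used elsewhere in the paper, for instance with the functions $\psi,\Psi$ built in Section \ref{SectExist}.
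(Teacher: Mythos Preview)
Your proposal is correct and follows exactly the standard argument: the paper does not actually give an independent proof of this lemma but refers to \cite{PQRV2}, and the proof there (as well as the paper's own proof of the approximate version \eqref{GenStroockVarAprox}) proceeds precisely as you outline---symmetrize the singular integral, apply the pointwise inequality $(\psi(a)-\psi(b))(a-b)\ge(\Psi(a)-\Psi(b))^2$, and identify the resulting Gagliardo form with $\|(-\Delta)^{s/2}\Psi(v)\|_{L^2}^2$. Your remarks on the justification of the principal value and the density argument are appropriate and consistent with how \cite{PQRV2} handles these technicalities.
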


\begin{theorem}[\textbf{Sobolev Inequality}] Let $0<s<1$ ($s<\frac{1}{2}$ if $N=1$). Then
$$
\|f\|_{\frac{2N}{N-2s}}\le \mathcal{S}_s \left\| (-\Delta)^{s/2}f\right\|_2,
$$
where the best constant is given in \cite{BV2014} page 31.
\end{theorem}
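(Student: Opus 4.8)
The plan is to derive this bound from the Hardy--Littlewood--Sobolev (HLS) inequality for the Riesz potential, using the identification of $(-\Delta)^{-s/2}$ with a Riesz potential. First I would reduce to Schwartz data: given $f\in\mathcal{S}(\RN)$, set $g:=(-\Delta)^{s/2}f\in L^2(\RN)$, so that $f=(-\Delta)^{-s/2}g$ is, up to a dimensional constant, the convolution of $g$ with $|x|^{-(N-s)}$; this makes sense because $2s<N$ under the stated hypotheses (including $s<1/2$ when $N=1$). Then I would apply HLS in its standard mapping form: for $0<\alpha<N$ and $1<p<q<\infty$ with $\tfrac1q=\tfrac1p-\tfrac\alpha N$, convolution with $|x|^{-(N-\alpha)}$ sends $L^p(\RN)$ boundedly into $L^q(\RN)$. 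Taking $\alpha=s$ and $p=2$ forces $q=\frac{2N}{N-2s}$ and yields $\|f\|_{\frac{2N}{N-2s}}\le C(N,s)\,\|(-\Delta)^{s/2}f\|_2$. Finally, for general $f$ with $(-\Delta)^{s/2}f\in L^2(\RN)$ one approximates $f$ in the homogeneous seminorm $\|(-\Delta)^{s/2}\cdot\|_2$ by Schwartz functions and passes to the limit, using Fatou's lemma on the left-hand side.

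To obtain the \emph{sharp} constant $\mathcal{S}_s$ (and not merely a finite one) I would dualise: writing the inequality as $\mathcal{S}_s^{-2}=\inf_f \|(-\Delta)^{s/2}f\|_2^2/\|f\|_{2N/(N-2s)}^2$ and substituting $g=(-\Delta)^{s/2}f$, the problem becomes, after using $L^q$--duality, the computation of the best constant in the bilinear HLS inequality $\big|\int\!\int \frac{g(x)h(y)}{|x-y|^{N-s}}\,dx\,dy\big|\le C\|g\|_2\|h\|_{\frac{2N}{N+2s}}$. This is precisely the case $p=2$ of Lieb's sharp HLS theorem, for which the extremisers are known explicitly; translating back, the Sobolev extremals are the functions $f_\lambda(x)=(\lambda^2+|x|^2)^{-(N-2s)/2}$ and $\mathcal{S}_s$ is given by the corresponding ratio of $\Gamma$-factors recorded in \cite{BV2014}. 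An alternative, self-contained route to sharpness is a symmetrisation argument directly on the Gagliardo form, using that $\|(-\Delta)^{s/2}f\|_2^2=c_{N,s}\int\!\int\frac{|f(x)-f(y)|^2}{|x-y|^{N+2s}}\,dx\,dy$ is non-increasing under symmetric-decreasing rearrangement while $\|f\|_{2N/(N-2s)}$ is invariant, which reduces the extremal problem to radially decreasing functions; one could also invoke the Caffarelli--Silvestre extension and the trace Sobolev inequality for the weight $y^{1-2s}$.

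The genuinely hard part is the identification of $\mathcal{S}_s$ and the extremal profiles: the mere boundedness is a soft consequence of HLS together with Marcinkiewicz interpolation, but pinning down the optimal constant requires either Lieb's rearrangement/competing-symmetries machinery for sharp HLS or an equivalent symmetrisation argument, plus the $\Gamma$-function bookkeeping needed to evaluate the constant on the explicit extremals. Since in this paper the inequality is only used as a tool — for instance through the Nash--Gagliardo--Nirenberg inequality in the Moser iteration of Theorem \ref{ThmSmoothing} — and the sharp value is simply quoted from \cite{BV2014}, I would write out the HLS-based proof of the inequality in full and merely cite Lieb and \cite{BV2014} for optimality.
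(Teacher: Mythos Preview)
Your proposal is correct, but note that the paper does not actually prove this theorem: it is stated in the Appendix as a recalled fact, with the proofs referred to \cite{PQRV2} and the sharp constant quoted from \cite{BV2014}. So there is no ``paper's own proof'' to compare against; the authors simply cite the result.

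That said, your HLS-based argument is the standard route and is essentially what one finds in the cited references. The reduction $f=(-\Delta)^{-s/2}g$ with $g\in L^2$, followed by HLS with $\alpha=s$, $p=2$, $q=2N/(N-2s)$, is exactly right (the restriction $2s<N$ is precisely the hypothesis $s<1/2$ when $N=1$). Your treatment of the sharp constant via Lieb's sharp HLS theorem, with extremals $(\lambda^2+|x|^2)^{-(N-2s)/2}$, is also the standard path and matches what \cite{BV2014} records. Since the paper only uses the inequality as a tool (via the Nash--Gagliardo--Nirenberg inequality in the smoothing-effect argument) and never needs the explicit value of $\mathcal{S}_s$, your plan to write out the HLS proof and merely cite Lieb and \cite{BV2014} for optimality is entirely appropriate --- indeed more than the paper itself does.
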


\begin{theorem}[\textbf{Nash-Gagliardo-Nirenberg type inequality}] Let $0<s<1$ ($s<\frac{1}{2}$ if $N=1$), $p\ge 1$, $r>1$, $0<s < \min \{N/2,1\}$. Then there exists a constant $C=C(p,r,s, N)>0$  such that for any $f \in L^p(\RN)$ with $(-\Delta)^{s}f \in L^r(\RN)$ we have
\begin{equation}\label{NGNIneq}
\|f\|_{r_2}^{\alpha +1 }\le C  \left\| (-\Delta)^{s}f\right\|_r \|f\|_{p}^{\alpha},
\end{equation}
where $r_2=\frac{N(rp+r-p)}{r(N-2s)}$, $\alpha =\frac{p(r-1)}{r}$.
\end{theorem}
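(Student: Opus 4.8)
The statement is a Gagliardo--Nirenberg interpolation inequality, and the natural plan is to obtain it by composing a fractional Sobolev embedding with a single application of H\"older's inequality. First I would rewrite $f$ in terms of its fractional Laplacian: since $f\in L^p(\RN)$ and $g:=(-\Delta)^{s}f\in L^r(\RN)$, one has $f=(-\Delta)^{-s}g=I_{2s}g$, the Riesz potential of order $2s$ of $g$ (the identity $(-\Delta)^{-s}(-\Delta)^{s}f=f$ being legitimate because $f$ lies in $L^{p}$ and thus decays, so no term harmonic at infinity is created). The Hardy--Littlewood--Sobolev theorem (see \cite{Stein70}), valid when $1<r<N/(2s)$ --- this is where the part $s<N/2$ of the hypothesis is used, guaranteeing $2s<N$ --- then yields
\begin{equation*}
\|f\|_{L^{r^*}(\RN)}\le C\,\|g\|_{L^r(\RN)}=C\,\|(-\Delta)^{s}f\|_{L^r(\RN)},\qquad \frac{1}{r^*}=\frac1r-\frac{2s}{N}.
\end{equation*}

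Next I would interpolate the $L^{r_2}$ norm between the $L^{p}$ and $L^{r^*}$ norms. Set $\theta:=\alpha/(\alpha+1)=p(r-1)/(rp+r-p)$, which belongs to $(0,1)$ since $\alpha=p(r-1)/r>0$ for $p\ge1$, $r>1$. A short computation using $\tfrac1{r^*}=\tfrac1r-\tfrac{2s}{N}$ gives
\begin{equation*}
\frac{\theta}{p}+\frac{1-\theta}{r^*}=\frac{r-1}{rp+r-p}+\frac{r}{rp+r-p}\Big(\frac1r-\frac{2s}{N}\Big)=\frac{r(N-2s)}{N(rp+r-p)}=\frac{1}{r_2},
\end{equation*}
so $r_2$ is exactly the interpolation exponent between $p$ and $r^*$ with weight $\theta$; in particular $r_2$ lies between $p$ and $r^*$, and H\"older's inequality gives $\|f\|_{r_2}\le\|f\|_{p}^{\theta}\|f\|_{r^*}^{1-\theta}$.

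Finally I would combine the two bounds and adjust the powers. Inserting the Sobolev estimate for $\|f\|_{r^*}$ yields $\|f\|_{r_2}\le C^{1-\theta}\|f\|_{p}^{\theta}\|(-\Delta)^{s}f\|_{r}^{1-\theta}$, and raising to the power $1/(1-\theta)=\alpha+1$ produces
\begin{equation*}
\|f\|_{r_2}^{\alpha+1}\le C\,\|(-\Delta)^{s}f\|_{r}\,\|f\|_{p}^{(\alpha+1)\theta}=C\,\|(-\Delta)^{s}f\|_{r}\,\|f\|_{p}^{\alpha},
\end{equation*}
since $(\alpha+1)\theta=\alpha$; this is precisely \eqref{NGNIneq}. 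The one delicate point --- and essentially the only place the hypotheses are consumed --- is checking the admissibility of the exponents throughout: that $2s<N$ so that $r^*$ and $r_2$ are finite and positive, that $1<r<N/(2s)$ so that the Hardy--Littlewood--Sobolev embedding applies (the borderline $r=N/(2s)$ would require the substitute embedding into $\mathrm{BMO}$ and is excluded), and that $\theta\in(0,1)$ so that the interpolation is genuine. Granting these, the computation above is routine; a complete proof is carried out in \cite{PQRV2}. One could also bypass the Hardy--Littlewood--Sobolev step by iterating the $L^2$ Sobolev inequality of the preceding theorem together with duality, but the Riesz-potential route above is the most economical.
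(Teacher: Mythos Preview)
Your argument is correct and is the standard route: Hardy--Littlewood--Sobolev to pass from $\|(-\Delta)^{s}f\|_{r}$ to $\|f\|_{r^*}$, followed by H\"older interpolation between $L^{p}$ and $L^{r^*}$, with the algebra of exponents checked exactly as you do. Note that the paper itself does not supply a proof of this inequality; it simply quotes the statement from \cite{PQRV2} (see the opening line of Section~\ref{sec:app}), and the proof there follows the same Sobolev-plus-interpolation scheme you outline.

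One small remark: the hypothesis $r<N/(2s)$, which you correctly flag as needed for the HLS step, is not written explicitly in the statement as reproduced here, so in a self-contained write-up you would either add it as an assumption or note that the inequality is only claimed in that range.
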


\subsection{Compactness criteria}

Necessary and sufficient conditions of convergence in the spaces $L^p(0,T:B)$ are given by Simon in \cite{simon}. We recall now their applications to evolution problems. We consider the spaces $X\subset B\subset Y$ with compact embedding $X\subset B$.

\begin{lemma}\label{ConvSimon1}Let $\mathcal{F}$ be a bounded family of functions in $L^p(0,T:X)$, where $1 \leq p <\infty$ and $\partial \mathcal{F}/\partial t=\{\partial f/\partial t: f \in \mathcal{F}\}$ be bounded in  $L^1(0,T:Y)$. Then the family $\mathcal{F}$ is relatively compact in $L^p(0,T:B)$.
\end{lemma}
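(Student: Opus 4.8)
\emph{Approach.} The statement is the Aubin--Lions--Simon compactness lemma, and the plan is to deduce it from Simon's abstract characterisation of relatively compact subsets of $L^p(0,T:B)$ \cite{simon}, combined with the Ehrling interpolation inequality. Since $X\subset B$ is compact and $B\subset Y$ continuous, a standard contradiction argument gives: for every $\eta>0$ there is $C_\eta>0$ with $\|v\|_B\le \eta\,\|v\|_X+C_\eta\,\|v\|_Y$ for all $v\in X$. Recall Simon's criterion: a bounded family $\mathcal{F}\subset L^p(0,T:B)$ with $1\le p<\infty$ is relatively compact in $L^p(0,T:B)$ if and only if \textbf{(a)} $\{\int_{t_1}^{t_2}f(t)\,dt:\ f\in\mathcal{F}\}$ is relatively compact in $B$ for all $0<t_1<t_2<T$, and \textbf{(b)} $\sup_{f\in\mathcal{F}}\|f(\cdot+h)-f\|_{L^p(0,T-h:B)}\to 0$ as $h\to 0^+$. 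Thus it suffices to verify (a) and (b) for our $\mathcal{F}$.

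\emph{Condition (a).} By Jensen's (equivalently Hölder's) inequality, for $0<t_1<t_2<T$,
\begin{equation*}
\Big\|\int_{t_1}^{t_2}f(t)\,dt\Big\|_X\le (t_2-t_1)^{1-1/p}\|f\|_{L^p(0,T:X)}\le C,
\end{equation*}
uniformly in $f\in\mathcal{F}$, since $\mathcal{F}$ is bounded in $L^p(0,T:X)$. Hence the set of these time averages is bounded in $X$, and therefore relatively compact in $B$ by the compactness of the embedding $X\subset B$.

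\emph{Condition (b).} Since $\partial_t f\in L^1(0,T:Y)$, the function $f$ has an absolutely continuous $Y$-valued representative and $f(t+h)-f(t)=\int_t^{t+h}\partial_\sigma f(\sigma)\,d\sigma$ in $Y$, so that $\|f(t+h)-f(t)\|_Y\le \Phi_h(t):=\int_t^{t+h}\|\partial_\sigma f(\sigma)\|_Y\,d\sigma$. Writing $M':=\sup_{f\in\mathcal{F}}\|\partial_t f\|_{L^1(0,T:Y)}$ and $M:=\sup_{f\in\mathcal{F}}\|f\|_{L^p(0,T:X)}$, one has simultaneously $\|\Phi_h\|_{L^\infty(0,T-h)}\le M'$ and $\|\Phi_h\|_{L^1(0,T-h)}\le hM'$, whence $\|\Phi_h\|_{L^p(0,T-h)}\le \|\Phi_h\|_{L^\infty}^{1-1/p}\|\Phi_h\|_{L^1}^{1/p}\le M' h^{1/p}$, while trivially $\|f(\cdot+h)-f\|_{L^p(0,T-h:X)}\le 2M$. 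Inserting the translate $f(\cdot+h)-f$ into the Ehrling inequality and taking $L^p$ norms in time gives
\begin{equation*}
\|f(\cdot+h)-f\|_{L^p(0,T-h:B)}\le 2\eta M+C_\eta M' h^{1/p}.
\end{equation*}
Given $\varepsilon>0$, first choose $\eta$ with $2\eta M<\varepsilon/2$, then $h_0>0$ with $C_\eta M' h_0^{1/p}<\varepsilon/2$; this makes the left side $<\varepsilon$ for all $f\in\mathcal{F}$ and $0<h<h_0$, which is (b).

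\emph{Conclusion and main obstacle.} By Simon's criterion, (a) and (b) imply that $\mathcal{F}$ is relatively compact in $L^p(0,T:B)$. The delicate step is (b): the time derivative is controlled only in $L^1$ in time, whereas one needs an $L^p$-in-time modulus of continuity valued in $B$; this gap is bridged precisely by the uniform $L^\infty$ bound on $\Phi_h$ (interpolation in the time variable) together with the Ehrling inequality, which trades $B$-control for smallness in $Y$ plus boundedness in $X$. If one prefers not to invoke Simon's abstract characterisation, it is itself obtained by mollifying $f$ in time (convolution with a smooth kernel after extending $f$ off $[0,T]$): condition (a) yields precompactness of the mollified family in $C([0,T]:B)$ via Arzelà--Ascoli, while (b) shows the mollification error is uniformly small in $L^p(0,T:B)$, and a diagonal argument finishes.
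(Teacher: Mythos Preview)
Your proof is correct. Note, however, that the paper does not actually prove this lemma: it is merely stated in the appendix as a classical compactness tool, with the proof attributed to Simon \cite{simon}. Your argument reproduces the standard route---Simon's characterisation of relatively compact sets in $L^p(0,T:B)$ together with the Ehrling interpolation inequality---and all steps are sound, including the delicate interpolation $\|\Phi_h\|_{L^p}\le \|\Phi_h\|_{L^\infty}^{1-1/p}\|\Phi_h\|_{L^1}^{1/p}$ that upgrades the $L^1$-in-time control of $\partial_t f$ to the required $L^p$-in-time modulus of continuity.
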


\noindent We refer to Rakotoson and Temam \cite{RakotosonTemam} for the proof of the following Lemma \ref{RakotosonTemam1} and \ref{RakotosonTemam2}.

\begin{lemma}\label{RakotosonTemam1}
Let $(V,\|\cdot\|_V)$, $(H,\| \cdot  \|_H)$ two separable Hilbert spaces. Assume that $V\subset H$ with a compact and dense embedding. Consider a sequence $(u_\delta)_{\delta>0}$ converging weakly to a function $u$ in $L^2(0,T:V)$, $T<+\infty$. Then $u_\delta \to u$ strongly in $L^2(0,T:H)$ if and only if
\begin{enumerate}
\item[(i)] $u_\delta(t) \rightharpoonup u(t)$ in $H$ for a.e. $t$.
\item[(ii)] $\lim_{meas(E) \to 0 , E\subset [0,T]} sup_{\delta >0} \int_{E}\|u_\delta(t) \|_H^2 dt =0.$
\end{enumerate}
\end{lemma}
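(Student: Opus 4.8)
The plan is to prove the two implications separately; the forward one is routine, and the reverse one carries essentially all the weight, with the compactness of $V\hookrightarrow H$ entering precisely there.

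For the direction ``$u_\delta\to u$ in $L^2(0,T;H)$ $\Rightarrow$ (i), (ii)'': from $\bigl|\,\|u_\delta(t)\|_H-\|u(t)\|_H\,\bigr|\le\|u_\delta(t)-u(t)\|_H$ one gets $\|u_\delta\|_H\to\|u\|_H$ in $L^2(0,T)$, hence $t\mapsto\|u_\delta(t)\|_H^2\to\|u(t)\|_H^2$ in $L^1(0,T)$; an $L^1$-convergent sequence is equi-integrable, which is exactly (ii). For (i), strong convergence in $L^2(0,T;H)$ yields, along a subsequence, $u_\delta(t)\to u(t)$ strongly — hence weakly — in $H$ for a.e.\ $t$; testing against a fixed countable dense family in $H$ (separability) and using the weak convergence $u_\delta\rightharpoonup u$ in $L^2(0,T;V)$ to identify the limit, one upgrades this to (i) by the usual ``every subsequence has a further subsequence'' argument.

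For the converse, assume (i) and (ii). Since the embedding $L^2(0,T;V)\subset L^2(0,T;H)$ is continuous, $u_\delta\rightharpoonup u$ also in $L^2(0,T;H)$, and testing against $u\in L^2(0,T;H)$ gives $(u_\delta,u)_{L^2(0,T;H)}\to\|u\|_{L^2(0,T;H)}^2$. Expanding
\[
\|u_\delta-u\|_{L^2(0,T;H)}^2=\|u_\delta\|_{L^2(0,T;H)}^2-2\,(u_\delta,u)_{L^2(0,T;H)}+\|u\|_{L^2(0,T;H)}^2,
\]
the claim reduces to showing $\int_0^T\|u_\delta(t)\|_H^2\,dt\to\int_0^T\|u(t)\|_H^2\,dt$. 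The inequality $\liminf_\delta\int_0^T\|u_\delta(t)\|_H^2\,dt\ge\int_0^T\|u(t)\|_H^2\,dt$ follows from Fatou's lemma applied to the pointwise bound $\liminf_\delta\|u_\delta(t)\|_H^2\ge\|u(t)\|_H^2$, which is weak lower semicontinuity of $\|\cdot\|_H$ granted by (i).

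The heart of the matter is the reverse inequality, equivalently the statement that $\|u_\delta(\cdot)-u(\cdot)\|_H\to0$ in measure on $(0,T)$: once this is known, (ii) (which, after adding the fixed integrable function $\|u(t)\|_H^2$, also controls $\|u_\delta(t)-u(t)\|_H^2$) together with Vitali's convergence theorem gives $\|u_\delta(\cdot)-u(\cdot)\|_H^2\to0$ in $L^1(0,T)$, i.e.\ the desired strong convergence. To prove convergence in measure I would argue by contradiction: if along a subsequence there are $\eta,\varepsilon>0$ with $\bigl|\{t:\|u_{\delta_k}(t)-u(t)\|_H>\eta\}\bigr|\ge\varepsilon$, I would use the uniform bound $\int_0^T\|u_{\delta_k}(t)\|_V^2\,dt\le M$ to discard, for each $k$, a set of small measure outside of which $u_{\delta_k}(t)$ stays in a fixed ball of $V$, and then invoke the compactness of $V\hookrightarrow H$ — which converts the a.e.\ weak $H$-convergence (i) into strong $H$-convergence along $V$-bounded pieces — to contradict the lower bound $\eta$. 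The hard part will be making this rigorous in spite of the $k$-dependence of the exceptional sets; this is handled by a biting-lemma / equi-integrability extraction applied to the $L^1(0,T)$-bounded sequence $t\mapsto\|u_{\delta_k}(t)\|_V^2$, followed by a diagonal procedure that produces, on a set of measure $\ge\varepsilon/2$, a subsequence along which $u_{\delta_k}(t)$ is bounded in $V$ and hence converges strongly in $H$ to $u(t)$ — the contradiction. This is, in substance, the argument of Rakotoson and Temam.
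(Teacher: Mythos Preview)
The paper does not actually prove this lemma: immediately before the statement it writes ``We refer to Rakotoson and Temam \cite{RakotosonTemam} for the proof of the following Lemma \ref{RakotosonTemam1} and \ref{RakotosonTemam2}.'' So there is no in-paper proof to compare your proposal against; the authors simply quote the result from the literature and use it as a black box (in combination with Lemma \ref{RakotosonTemam2}) to derive Theorem \ref{RakotosonTemam3}.

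Your sketch is a reasonable reconstruction of the Rakotoson--Temam argument, and you explicitly acknowledge this at the end. The forward direction is fine. For the converse, your reduction to convergence in measure plus Vitali via (ii) is the right strategy, and the compactness of $V\hookrightarrow H$ enters exactly where you say. The one place that is genuinely delicate --- and that you flag yourself --- is the contradiction step: the exceptional ``bad'' sets where $\|u_{\delta_k}(t)\|_V$ is large are $k$-dependent, so you cannot simply fix a common set of large measure on which all $u_{\delta_k}(t)$ are $V$-bounded. The biting-lemma/diagonal extraction you invoke is precisely the mechanism Rakotoson and Temam use to overcome this, so your outline is faithful to the source; but as written it remains a plan rather than a proof, and a reader would need to consult \cite{RakotosonTemam} to see the details carried out.
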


\begin{lemma}\label{RakotosonTemam2}
Let $H$ be a separable Hilbert space. Consider $u_\delta$ a sequence of functions satisfying
the following:
\begin{enumerate}
\item[1)] For almost every $ t \subset (0,T)$, $\sup_{\delta>0}\|u_\delta(t)\|_{H}$ is finite.
\item[2)] $u \rightharpoonup u$ in $L^2(0,T:H)$.
\item[3)] There exists a countable set $D$ dense in $H$ such that for all $\psi \in D$, the sequence $g^{\delta}_{\psi} (t) =<u_\delta(t), \psi>_H$ is relatively compact in $L^1(0, T)$.
\end{enumerate}
Then, there exists a subsequence $(\delta) =(\delta_D)$ such that $u^\delta(t)\rightharpoonup u(t)$ in $H$-weak for almost every $t$.
\end{lemma}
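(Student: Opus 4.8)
The statement is a standard functional-analytic fact (proved in \cite{RakotosonTemam}), and the plan is to reduce it to a Cantor diagonal extraction over the countable family $D=\{\psi_k\}_{k\ge1}$ followed by the identification of the pointwise limits with the coefficients $\langle u(t),\psi_k\rangle_H$ of the weak $L^2(0,T:H)$ limit $u$. First I would invoke hypothesis 3): for $k=1$ the set $\{g^\delta_{\psi_1}\}_\delta$ is relatively compact in $L^1(0,T)$, so along a subsequence $g^\delta_{\psi_1}\to h_1$ in $L^1(0,T)$ and, along a further subsequence, also a.e.\ on $(0,T)$; iterating over $k=2,3,\dots$ and taking the diagonal subsequence, still denoted $(\delta)$, I obtain functions $h_k\in L^1(0,T)$ with $g^\delta_{\psi_k}\to h_k$ both in $L^1(0,T)$ and a.e.\ on $(0,T)$, simultaneously for every $k$.

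The second step is to show $h_k(t)=\langle u(t),\psi_k\rangle_H$ for a.e.\ $t$. For any $\varphi\in L^\infty(0,T)$ the map $t\mapsto\varphi(t)\psi_k$ lies in $L^2(0,T:H)$, so hypothesis 2) yields $\int_0^T\varphi\, g^\delta_{\psi_k}\,dt=\langle u_\delta,\varphi\psi_k\rangle_{L^2(0,T:H)}\to\langle u,\varphi\psi_k\rangle_{L^2(0,T:H)}=\int_0^T\varphi\,\langle u(\cdot),\psi_k\rangle_H\,dt$; on the other hand $\left|\int_0^T\varphi\,(g^\delta_{\psi_k}-h_k)\,dt\right|\le\|\varphi\|_{\infty}\,\|g^\delta_{\psi_k}-h_k\|_{L^1(0,T)}\to0$. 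Since both $h_k$ and $\langle u(\cdot),\psi_k\rangle_H$ lie in $L^1(0,T)$ and are paired with all of $L^\infty(0,T)$, they agree a.e. Let $N_k$ be the null set where $g^\delta_{\psi_k}(t)\to\langle u(t),\psi_k\rangle_H$ fails, let $N_0$ be the null set from hypothesis 1) outside of which $C(t):=\sup_\delta\|u_\delta(t)\|_H<\infty$, and put $N:=N_0\cup\bigcup_k N_k$, still a null set.

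Finally I fix $t\notin N$. The sequence $(u_\delta(t))_\delta$ is bounded in the Hilbert (hence reflexive) space $H$, so every subsequence has a weakly convergent sub-subsequence with some limit $w$; testing against each $\psi_k$ forces $\langle w,\psi_k\rangle_H=\langle u(t),\psi_k\rangle_H$, and density of $D$ in $H$ gives $w=u(t)$. Hence $u(t)$ is the unique weak cluster point of the bounded sequence $(u_\delta(t))_\delta$, so $u_\delta(t)\rightharpoonup u(t)$ in $H$; since $t\notin N$ was arbitrary, this is the claim. The only genuinely delicate point is the matching in the second step: one must align the a.e./$L^1$ convergence supplied by the compactness hypothesis with the distributional content of the weak $L^2(0,T:H)$ convergence, and it is the finiteness of the time interval that allows $L^\infty(0,T)$ test functions to be paired against mere $L^1$-convergence. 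Everything else is bookkeeping, together with the standard fact that a bounded sequence in a reflexive space with a single weak cluster point converges weakly to it.
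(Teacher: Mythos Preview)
Your argument is correct. The paper does not actually supply a proof of this lemma: it simply states the result and refers the reader to Rakotoson and Temam \cite{RakotosonTemam}. What you have written is the standard proof---a diagonal extraction over the countable dense set $D$, identification of the $L^1$ limits $h_k$ with $\langle u(\cdot),\psi_k\rangle_H$ via the weak $L^2(0,T:H)$ convergence (tested against functions of the form $\varphi(t)\psi_k$ with $\varphi\in L^\infty(0,T)$), and finally the subsequence principle applied to the bounded sequence $(u_\delta(t))_\delta$ at each good time $t$. All steps are sound; in particular your remark that $T<\infty$ is what allows $L^\infty(0,T)$ test functions to detect $L^1$ convergence is exactly the point, and the uniqueness-of-cluster-point argument in a reflexive space is the right way to close.
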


Combining both lemmas above  the following optimal compactness theorem holds.

\begin{theorem}\label{RakotosonTemam3}
Let $(V,\|\cdot\|_V)$, $(H,\| \cdot  \|_H)$ two separable Hilbert spaces. Assume that $V\subset H$ with a compact and dense embedding. Consider a sequence $(u_\delta)_{\delta>0}$ such that
\begin{enumerate}
\item[a)]
$u_\delta \rightharpoonup u$ in $L^2(0,T:V)$, $T<+\infty$.
\item[b)] For almost every $ t \in(0,T)$, $\sup_{\delta>0}\|u_\delta(t)\|_{H}$ is finite.
\item[c)] There exists a countable set $D$ dense in $H$ such that for all $\psi \in D$, the sequence \\
$g^{\delta}_{\psi}(t)=<u_\delta(t), \psi>_H$ is relatively compact in $L^1((0, T))$.\end{enumerate}
Then, up to a subsequence,  $u_\delta \to u$ strongly in $L^2(0,T:H)$.
\end{theorem}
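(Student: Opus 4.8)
The plan is to recognize that the three hypotheses a), b), c) of Theorem \ref{RakotosonTemam3} are exactly the hypotheses of Lemma \ref{RakotosonTemam2}, and that the conclusion of that lemma is precisely the first of the two conditions needed to apply the ``if'' direction of Lemma \ref{RakotosonTemam1}. Concretely, I would first apply Lemma \ref{RakotosonTemam2} with the given separable Hilbert space $H$: hypothesis b) is assumption 1) of that lemma; hypothesis a), which in particular yields $u_\delta \rightharpoonup u$ in $L^2(0,T:H)$ because $V\subset H$ continuously (compact embeddings being continuous), is assumption 2); and hypothesis c), with the same countable dense set $D\subset H$, is assumption 3). Lemma \ref{RakotosonTemam2} then provides a subsequence $(\delta_D)$ along which $u_\delta(t)\rightharpoonup u(t)$ weakly in $H$ for almost every $t\in(0,T)$, which is exactly condition (i) of Lemma \ref{RakotosonTemam1}. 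Along this subsequence the weak convergence $u_\delta\rightharpoonup u$ in $L^2(0,T:V)$ is of course preserved, so the standing hypothesis of Lemma \ref{RakotosonTemam1} is met.

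It remains to verify condition (ii) of Lemma \ref{RakotosonTemam1}, the equi-integrability
$$\lim_{\mathrm{meas}(E)\to 0,\ E\subset[0,T]}\ \sup_{\delta>0}\int_E \|u_\delta(t)\|_H^2\,dt = 0 .$$
From hypothesis a) the sequence is bounded in $L^2(0,T:V)$, and the continuity of the embedding $V\hookrightarrow H$ turns $\int_0^T\|u_\delta(t)\|_V^2\,dt\le M$ into a uniform bound $\int_0^T\|u_\delta(t)\|_H^2\,dt\le cM$; one then has to promote this $L^1$-bound, together with the a.e.\ finiteness of $\sup_{\delta}\|u_\delta(t)\|_H$ from b), into the displayed equi-integrability, i.e.\ rule out that a fixed portion of the $L^2(0,T:H)$-mass of $u_\delta$ concentrates on time sets of vanishing measure. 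This is the only non-bookkeeping step of the argument, and the place where the compactness of $V\hookrightarrow H$ and the $H$-bound of b) are genuinely needed; I expect it to be the main obstacle. In the application of the theorem in Section \ref{subsec:compacttemam} it becomes immediate, since there b) is available in the stronger uniform form $\sup_{\delta>0,\,t\in(0,T)}\|u_\delta(t)\|_H\le C$, whence $\int_E\|u_\delta(t)\|_H^2\,dt\le C^2\,\mathrm{meas}(E)\to0$.

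With conditions (i) and (ii) established and the standing weak convergence a) in force along the subsequence $(\delta_D)$, the ``if'' implication of Lemma \ref{RakotosonTemam1} gives $u_\delta\to u$ strongly in $L^2(0,T:H)$ along that subsequence, which is exactly the assertion of Theorem \ref{RakotosonTemam3}. Thus the proof is essentially a chaining of Lemmas \ref{RakotosonTemam2} and \ref{RakotosonTemam1}, with the only real analytic content sitting in the verification of the equi-integrability (ii).
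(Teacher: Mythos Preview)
Your approach is exactly the paper's: apply Lemma~\ref{RakotosonTemam2} (using that a) gives weak convergence in $L^2(0,T:H)$ via the continuous embedding, together with b) and c)) to obtain (i) of Lemma~\ref{RakotosonTemam1}, and then invoke Lemma~\ref{RakotosonTemam1}. The paper's proof is precisely this chaining, in four lines.

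The one point on which you are more careful than the paper is condition (ii). The paper simply writes ``the upper bound given by 1) implies (ii) from Lemma~\ref{RakotosonTemam1}'', i.e.\ it asserts that hypothesis b) alone yields the equi-integrability, without further argument. You are right to pause here: a.e.\ finiteness of $g(t):=\sup_{\delta}\|u_\delta(t)\|_H$ by itself does not force $g^2\in L^1(0,T)$, which is what one would use to dominate $\int_E\|u_\delta(t)\|_H^2\,dt$ uniformly in $\delta$. Your observation that in the actual application (Section~\ref{subsec:compacttemam}) one has the stronger uniform bound $\sup_{\delta,t}\|u_\delta(t)\|_H\le C$, so that $\int_E\|u_\delta(t)\|_H^2\,dt\le C^2\,\mathrm{meas}(E)$, is exactly the clean way to close the argument where it is used. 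So there is no divergence in strategy between your proposal and the paper; you have simply made explicit a step that the paper takes for granted.
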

\begin{proof}
Weak convergence in $L^2(0,T:V)$ implies weak convergence in $L^2(0,T:H)$, therefore a) implies  assumption 2) in Lemma \ref{RakotosonTemam2}.
By Lemma \ref{RakotosonTemam2}  we obtain that, up to a subsequence,  $u^\delta(t)\rightharpoonup u(t)$ in $H$-weak for almost every $t$.
Moreover, the upper bound given by 1) implies (ii) from
Lemma \ref{RakotosonTemam1}.
Then using Lemma \ref{RakotosonTemam1}  we obtain that  $u_\delta \to u$ strongly in $L^2(0,T:H)$.
\end{proof}

\subsection{A technical result related to the approximation arguments}\label{app:techresult}
Let $U_4$ be as given in Section \ref{Subsec:delta}. We want to show  that $\nabla \cdot  (-\Delta)^{-s} ( U_4^{m-1} \nabla \phi) \in L^p(\RN\times (0,T))$ for some $p>1$.
We will express the operator $\nabla \cdot (-\Delta)^{-s}$  using  the Riesz transforms applied to the Riesz potential operator or to a fractional operator, depending on the range of $s$.

First let $s\in (0, 1/2)$. We have that
$$\nabla \cdot(-\Delta)^{-s}(U_4^{m-1}   \nabla \phi) = \nabla \cdot (-\Delta)^{-1/2} (-\Delta)^{1/2-s}(U_4^{m-1} \nabla \phi)
= \sum_{j=1} ^N \partial_{x_j} (-\Delta)^{-1/2} (-\Delta)^{1/2-s} (U_4^{m-1} \, \partial_{x_j} \phi ),$$
where $\mathcal{R}_j :=\partial_{x_j} (-\Delta)^{-1/2}$ are the Riesz Transforms which are  bounded linear operators from $L^2$ to $L^2$.
Notice that $(-\Delta)^{1/2-s}(U_4^{m-1} \nabla \phi)  \in L^2$ since
$$  (-\Delta)^{1/2-s} (U_4^{m-1} \, \partial_{x_j} \phi ) =  (-\Delta)^{1/2-s} (U_4^{m-1}) \, \partial_{x_j} \phi  +   U_4^{m-1} \, (-\Delta)^{1/2-s} (\partial_{x_j} \phi ) - H^{1/2-s}(U_4^{m-1}, \partial_{x_j} \phi)$$
where $H^s$ is the remaining in the fractional Leibniz formula, also called Carr\'e du Champ operator  \cite[Ch. 1.4.2]{Bakry}   :
$$H^s(f,g)(x):= P.V. \int_{\RN} \frac{(f(x)-f(y))(g(x)-g(y))}{|x-y|^{N+2s}}dy.$$
Note that, by H\"older's Inequality
\begin{align*}
\|H^{1/2-s}(U_4^{m-1}, \partial_{x_j} \phi)\|_{L^2}^2  &\le \int |H^{1/2-s}(U_4^{m-1}, U_4^{m-1})| dx \cdot  \int | H^{1/2-s} (\partial_{x_j} \phi, \partial_{x_j} \phi)| dx \\
&= \| (-\Delta)^{\frac{1/2-s}{2}}(U_4^{m-1}) \|_{L^2}^2 \, \| (-\Delta)^{\frac{1/2-s}{2}}(\partial_{x_j} \phi) \|_{L^2}^2 <\infty.
\end{align*}
Thus, using the energy estimate \eqref{energyU4} with $p=m-1$, we get that
\begin{align*}
\| (-\Delta)^{1/2-s} (U_4^{m-1} \, \partial_{x_j} \phi ) \|_{L^2} \le& \| (-\Delta)^{1/2-s} (U_4^{m-1})\|_{L^2} \, \|\partial_{x_j} \phi  \| _{L^2} + \| U_4^{m-1} \|_{L^2}  \, \|(-\Delta)^{1/2-s} (\partial_{x_j} \phi ) \|_{L^2} \\
& + \|H^{1/2-s}(U_4^{m-1}, \partial_{x_j} \phi)\|_ {L^2}  <\infty.
\end{align*}
We have used the energy estimate (4.24) for $U_4$ with $p=m-1$. We then obtain that \[
\nabla \cdot (-\Delta)^{-s}(U_4^{m-1} \nabla \phi) \in L^2(\RN \times (0,T))\] since
\begin{align*}
\| \nabla\cdot (-\Delta)^{-s}(U_4^{m-1} \cdot  \nabla \phi)  \|_{L^2} &=\left\| \sum_{j=1} ^N \partial_{x_j} (-\Delta)^{-1/2} (-\Delta)^{1/2-s} (U_4^{m-1} \, \partial_{x_j} \phi ) \right\|_{L^2} \\
& \le  \sum_{j=1} ^N  \|\mathcal{R}_j (-\Delta)^{1/2-s} (U_4^{m-1} \, \partial_{x_j} \phi )  \|_{L^2} \\
& \le \sum_{j=1}^N\|(-\Delta)^{1/2-s}(U_4^{m-1} \, \partial_{x_j} \phi)\|_{L^2}<\infty.
\end{align*}

\medskip
Consider now $s\in [1/2,1)$. We interpret the term as follows:
$$\nabla \cdot (-\Delta)^{-s}(U_4^{m-1} \nabla \phi) = \nabla (-\Delta)^{-1/2} (-\Delta)^{-(s-1/2)}(U_4^{m-1} \nabla \phi) $$
where the Riesz vector transform $\mathcal{R}_j :=\partial_{x_j} (-\Delta)^{-1/2}$ is a bounded operator in $L^p$ for $1<p<\infty$  \cite[Cor. 4.2.8, pp. 274]{Grafakos}.
Then $U_4^{m-1} \nabla \phi \in L^p(\RN \times (0,T))$ for every $p>1$ since
\begin{align*}
\| U_4^{m-1} \nabla \phi \|_{L^p} = \left( \int_{\RN}U_4^{p(m-1)}|\nabla \phi|^p dx \right)^{1/p} \le \|U_4\|_{\infty}^{\frac{p(m-2)}{p}} \left( \int_{\RN}U_4^{p}|\nabla \phi|^p dx \right)^{1/p} <\infty.
\end{align*}
It follows that for $s\in [1/2,1)$, the operator $ (-\Delta)^{-(s- 1/2)}=I_ {2s-1}$  is the Riesz potential, and we have
$$ \| (-\Delta)^{-(s-1/2)}(U_4^{m-1} \nabla \phi)  \|_{L^q} \le \| U_4^{m-1} \nabla \phi \|_{L^p}<\infty,
,\quad \frac{1}{q}=\frac{1}{p}-\frac{2s-1}{N},$$
for all $p<N/(2s-1)$. Since $N/(2s-1)>1$ for all $s\in[1/2,1)$, this shows that $\nabla \cdot (-\Delta)^{-s}(U_4^{m-1} \nabla \phi) \in L^q( \R^N\times(0,T))$ for some $q>1$.

\bigskip

\noindent {\bf \large Acknowledgements}

\noindent  Authors partially supported by the Spanish Project MTM2014-52240-P.    D.S. and F.d.T are partially supported by the MEC-Juan de la Cierva postdoctoral fellowships number FJCI-2015-25797 and FJCI-2016-30148 respectively, and by the BCAM Severo Ochoa accreditation SEV-2017-0718.
F.d.T. is partially supported by the Toppforsk (research excellence) project Waves and Nonlinear Phenomena (WaNP), grant 250070 from the Research Council of Norway.
  J.L.V. has been a Visiting Professor at Univ. Complutense de Madrid during the academic year 2017-2018. The authors want to thank the anonymous referee for accurate suggestions that allowed to improve the original text.

%

\bibliographystyle{siam}

\end{document}